\theoremstyle{plain}
\newtheorem{theorem}{Theorem}[section]
\newtheorem*{assumption}{Assumption}
\newtheorem{proposition}[theorem]{Proposition}
\newtheorem{question1}[theorem]{Question}
\newtheorem{corollary}[theorem]{Corollary}
\newtheorem{lemma}[theorem]{Lemma}
\newtheorem{question}{Question}
\theoremstyle{definition}
\newtheorem{definition}[theorem]{Definition}
\newtheorem{example}[theorem]{Example}
\newtheorem{remark}[theorem]{Remark}
\DeclareMathOperator{\GL}{GL}
\DeclareMathOperator{\PGL}{PGL}
\DeclareMathOperator{\Aut}{Aut}
\DeclareMathOperator{\Gal}{Gal}
\DeclareMathOperator{\diag}{diag}
\DeclareMathOperator{\End}{End}
\DeclareMathOperator{\Mat}{Mat}
\DeclareMathOperator{\Tr}{Tr}
\DeclareMathOperator{\rk}{rk}
\DeclareMathOperator{\Sym}{Sym}
\DeclareMathOperator{\Hom}{\mathrm{Hom}}
\DeclareMathOperator{\Proj}{\mathrm{Proj}}
\DeclareMathOperator{\Spec}{\mathrm{Spec}}
\DeclareMathOperator{\val}{val}
\DeclareMathOperator{\MG}{M}
\DeclareMathOperator{\Ann}{Ann}
\newcommand{\Q}{\mathbb{Q}}
\newcommand{\Z}{\mathbb{Z}}
\newcommand{\F}{\mathbb{F}}
\newcommand{\K}{K}
\newcommand{\LL}{L}
\newcommand{\MM}{\mathbb{M}}
\newcommand{\Ccal}{\mathcal{C}}
\newcommand{\Ocal}{\mathcal{O}}
\newcommand{\Lcal}{\mathcal{L}}
\newcommand{\Kcal}{\mathcal{K}}
\newcommand{\Mcal}{\mathcal{M}}
\newcommand{\mO}{\mathcal O}
\newcommand{\mfrak}{\mathfrak{m}}
\newcommand{\pfrak}{\mathfrak{p}}
\newcommand{\qfrak}{\mathfrak{q}}
\newcommand{\Bfrak}{\mathfrak{B}}
\newcommand{\m}{\mathfrak m}
\newcommand{\Norm}[2]{\mathrm{N}_{#1/#2}}
\newcommand{\Trl}{\Tr_{\LL/\K}}
\newcommand{\coker}{\mathrm{coker}}
\newcommand{\rki}{\mathrm{rk}_{\mathrm{inn}}}
\newcommand{\frk}{\mathrm{frk}}
\newcommand{\Res}{\mathrm{Res}}
\newcommand{\cor}[1]{\mathcal{#1}}
\newcommand{\gen}[1]{\langle #1\rangle}
\newcommand{\xdashrightarrow}[2][]{\ext@arrow 0359\rightarrowfill@@{#1}{#2}}
\newcommand{\xdashleftarrow}[2][]{\ext@arrow 3095\leftarrowfill@@{#1}{#2}}
\newcommand{\xdashleftrightarrow}[2][]{\ext@arrow 3359\leftrightarrowfill@@{#1}{#2}}
\def\rightarrowfill@@{\arrowfill@@\relax\relbar\rightarrow}
\def\leftarrowfill@@{\arrowfill@@\leftarrow\relbar\relax}
\def\leftrightarrowfill@@{\arrowfill@@\leftarrow\relbar\rightarrow}
\def\arrowfill@@#1#2#3#4{%
	$\m@th\thickmuskip0mu\medmuskip\thickmuskip\thinmuskip\thickmuskip
	\relax#4#1
	\xleaders\hbox{$#4#2$}\hfill
	#3$
}
\patchcmd{\@setaddresses}{\indent}{\noindent}{}{}
\patchcmd{\@setaddresses}{\indent}{\noindent}{}{}
\patchcmd{\@setaddresses}{\indent}{\noindent}{}{}
\patchcmd{\@setaddresses}{\indent}{\noindent}{}{}
\title{Valued rank-metric codes}
\author[Y.~El Maazouz]{Yassine El Maazouz}
\address{Y.~El Maazouz: 
RWTH Aachen, Chair of Algebra and Number Theory,
Pontdriesch 14/16, Office 110,
D-52056 Aachen,
Germany.}
\email{yassine.el-maazouz@rwth-aachen.de}
\author[M.~A.~Hahn]{Marvin Anas Hahn}
\address{M.~A.~Hahn: School of Mathematics 17, Westland Row, Trinity College Dublin, Dublin 2, Ireland.}
\email{hahnma@tcd.ie}
\author[A.~Neri]{Alessandro Neri}
\address{A.~Neri: Department of Mathematics: Analysis, Logic and Discrete Mathematics, Ghent University, Krijgslaan 281, 9000 Ghent, Belgium.}
\email{alessandro.neri@ugent.be}
\author[M.~Stanojkovski]{Mima Stanojkovski}
\address{M.~Stanojkovski:
Dipartimento di Matematica, Universit\`a di Trento, Via Sommarive  14, 38123 Povo di Trento (TN), Italy.
}
\email{mima.stanojkovski@unitn.it}
\date{}
\subjclass[2020]{Primary: 05E14, 11T71,  94B05; Secondary: 05B25, 14D06, 16S35}
\keywords{Rank-metric codes, discretely valued fields, Bruhat-Tits buildings, skew algebras,  Mustafin varieties, MRD codes.}
\begin{document}

	\begin{abstract}
		In this paper, we study linear spaces of matrices defined over discretely valued fields and discuss their \emph{dimension} and\ \emph{minimal rank} drops over the associated residue fields. To this end, we take first steps into the theory of rank-metric codes over discrete valuation rings by means of skew algebras derived from Galois extensions of rings. Additionally, we model \emph{projectivizations} of rank-metric codes via Mustafin varieties, which we then employ to give sufficient conditions for a decrease in the dimension.
	\end{abstract}

	\maketitle

  {
  \tableofcontents
  }

\section{Introduction}

\noindent
Linear spaces of matrices are a classical topic in mathematics with a wide array of applications in various fields, such as linear algebra \cite{MR3672967,MR2353110, MR2154253}, optimization and semidefinite programming \cite{MR3075433, MR3015090}, algebraic geometry \cite{MR3107531, MR2504735},  and probability theory \cite{MR3186444}. A particularly interesting application of linear spaces of matrices stems from algebraic coding theory in the study of so-called \textit{rank-metric codes}. Originally introduced in \cite{MR514618}, they became very popular in the last decade, due to their application to random network coding \cite{MR2450762}. Because of their numerous applications, rank-metric codes have been studied  from various points of view, showing connections with many topics in finite geometry \cite{MR3620728}, algebra \cite{sheekey2020new,byrne2019tensor} and combinatorics \cite{gluesing2020partitions}; see also \cite{sheekey201913}. The key question in this line of research is the following:\vspace{3pt}
\begin{center}
\em Given a field $K$ and positive integers $m,n,d,r$, is there a linear space of matrices $\mathcal{L}\subseteq\mathrm{Mat}_{m\times n}(K)$ of dimension $d$, such that all $A\in\mathcal{L}$ satisfy $\mathrm{rank}(A)\ge r$? \vspace{3pt}
\end{center}
Such a linear space is called a $(d,r)$-\textit{rank-metric code in} $\mathrm{Mat}_{m\times n}(K)$.
This question has been well-studied when $K$ is a finite field \cite{MR514618, MR791529}, when $K$ admits a degree $n$ cyclic Galois extension \cite{MR1306980, MR2503966, augot2013rank}, over the reals \cite{MR139178,adams1965matrices,MR1378837} and over algebraically closed fields \cite{MR296081,MR1093747}.
In this work, we make first steps in the theory of rank-metric codes over local fields and study their relation to rank-metric codes over the finite rings associated to the ideals' filtration. 

\subsection{Algebraic set-up}
In the current section, we introduce the notation that will be valid throughout the paper.
Let $K$ be a \emph{discretely valued field}, i.e.\ the field $K$ is endowed with a map $\val_K :K \to \mathbb{Z} \cup\{+ \infty\}$ that, for all $a,b\in K$, satisfies the following:
	 \begin{align}
            &(1)\, \, \, \val_K(a) = + \infty \textrm{  if and only if  } a = 0 ,\\
            &(2)\, \, \, \val_K(ab) = \val_K(a)  + \val_K(b),\\
            &(3)\, \, \, \val_K(a+b)\ge\mathrm{min}( \val_K(a), \val_K(b)),\\
            &(4)\, \, \, \val_K(K)\not=\{0,+\infty\}.
	 \end{align}
	 The last condition simply excludes the trivial valuation on $K$. The \emph{valuation ring} $\mathcal{O}_K$ of $K$, also called the \emph{ring of integers} with respect to $\val_\K$, 
	 is defined by
	 \[
	 \mathcal{O}_K = \{x\in K\mid\val_K(x)\ge0\}
	 \]
	 and is a local principal ideal domain with unique maximal ideal $$\mathfrak{m}_K =\{x\in K\mid \val_K(x)>0\}.$$ Moreover, the maximal ideal $\mathfrak{m}_K$ is generated by any element $\pi \in K$ with $\val_K(\pi)=1$. Any such element is called a \textit{uniformizer}. For the rest of this paper, we fix once and for all a uniformizer $\pi$. We denote the \textit{residue field} by
	 $$\overline{K}=\faktor{\mathcal{O}_K}{\mathfrak{m}_K}$$ 
	 and use the bar notation for the objects in $\overline{\K}$. For each positive integer $k$, we write
	 \[
	 \Res(\K,k)=\mO_{\K}/\m_{\K}^k, 
	 \]
	 for the \emph{$k$-th residue ring} associated to $K$ 
	 and observe that  $\overline{\K}=\Res(\K,1)$. For readers not familiar with discretely valued fields, we collect below a couple of fundamental examples.

	 \begin{example}\
	 	\begin{enumerate}[label=$(\arabic*)$]
	 		\item Let $p$ be a prime number and let $K=\mathbb{Q}$ be the field of rational numbers. We endow $K$ with the \emph{$p$-adic valuation} $\val_\K=\val_p$ 
	 		 that associates to each $a/b\in\Q$, with $a,b \in \Z$, the integer
	 		 \[
	 		 \val_p(a/b)=\max\{ \alpha : p^\alpha \textup{ divides }a\}-\max\{\beta : p^{\beta}\textup{ divides }b \}.
	 		 \]
	 	 The ring of integers associated to $\val_p$ is 
	 	 \[
	 	 \mO_{\K}=\{ a/b\in \Q : a,b\in\Z \textup{ coprime, } p \textup{ does not divide } b \}
	 	 \]
	 	  and its unique maximal ideal is $\mathfrak{m}_{\K} = p \mO_{\K}$, generated by the uniformizer $\pi = p$. For each positive integer $k$, we have that $\Res(\K,k)$ is isomorphic to $\Z/p^k\Z$ and in particular $\overline{\K}=\F_p$ is the finite field of $p$ elements. 
	 		
	 		\item Let $K$ be the field of formal Laurent series in the variable $\pi$ with rational coefficients, that is
	 		\begin{equation}
	 			K=\mathbb{Q}((\pi))=\left\{\sum_{j\ge l}a_j \pi^j\mid a_j\in\mathbb{Q}, l\in\mathbb{Z} \text{ and } a_l \neq 0 \right\}.
	 		\end{equation}
	 		The valuation of an element $f = \sum_{j\ge l}a_j\pi^j \in K$ with  $ a_l \neq 0$ is $l$, in other words $\val_{\K}(f)$ is the lowest exponent of $\pi$ in the series (which we take to be  $+\infty$ when $f = 0$). Then, the ring of integers of $K$ is the ring of formal power series with rational coefficients, i.e.
	 		\begin{equation}
	 			\mathcal{O}_K=\mathbb{Q}[[\pi]]=\left\{\sum_{j\ge l}a_j \pi^j\mid a_j\in\mathbb{Q}, l\in\mathbb{Z}_{\ge0} \text{ and } a_l \neq 0 \right\},
	 		\end{equation}
	 		and its maximal ideal
	 		\begin{equation}
	 			\mathfrak{m}_K = \pi \Q[[\pi]] = \left\{\sum_{j\ge l}a_j\pi^j\mid a_j\in\mathbb{Q}, l\in\mathbb{Z}_{>0} \text{ and } a_l \neq 0\right\}
	 		\end{equation}
	 		is generated by the uniformizer $\pi$. The residue field $\overline{K}$ is isomorphic to $\mathbb{Q}$.
	 	\end{enumerate}
	 \end{example} 
	 
	 \noindent
	 In the examples we just considered, the residue field $\overline{\K}$ happened to be perfect. This is not always the case. For the sake of brevity, in this paper we will assume that $\K$ is a \emph{local field}, i.e.\ that $\overline{\K}$ is finite and $K$ is complete with respect to its valuation.\footnote{This assumption is only needed for the existence of the Teichm\"uller representatives in Section \ref{sec:rank-drops}. Otherwise, the results hold for any discretely valued field.}
	 
	 Let $\LL/\K$ be an unramified Galois extension of (finite) degree $n$ and Galois group $$G=\Gal(\LL/\K);$$
	 then $G$ is cyclic and we fix a generator $\sigma$ of $G$. 
	 Let $\val_\LL$ be the discrete valuation on $\LL$ that uniquely extends $\val_\K$, that is, for each $x \in \LL$, one has
	 $$\val_\LL(x)=\frac{1}{n}\val_\K(\Norm{\LL}{\K}(x)).$$
	 Let $\mO_\LL$ be the ring of integers with respect to $\val_\LL$ and let $\m_\LL$ denote its unique maximal ideal. 
	 Denote by $\overline{\LL}$ the residue field $\mO_\LL/\m_\LL$ and note that, $\LL/\K$ being unramified, $\m_\LL=\mO_{\LL} \pi$. For each positive integer $k$, let $\Res(\LL,k)$ 
	 be the analogue of $\Res(\K,k)$ for $\LL$. Since the extension $\LL/\K$ is unramified, its Galois group $G$ is isomorphic to $\Gal(\overline{\LL}/\overline{\K})$ and we identify these two groups.
	 
	 The global setting is summarized in the following diagram:
	   
	   \[
	   \xymatrix{
	   \overline{\LL} \ar@{-}[dd]_{G} & & \m_{\LL}\ar[rr] \ar@{-}[dd] & & \mO_{\LL}\ar@{-}[dd]\ar[rr] & & \LL\ar@{-}[dd]^{G} \\ \\
	   \overline{K} & & \m_{\K}\ar[rr] && \mO_{\K}\ar[rr] && \K
	   }
	   \]
	 
\subsection{First steps in valued rank-metric codes}

Our main focus in this work is the following question.
\begin{question}\label{quest:rank_metric_residue}
Let $\mathcal{L}\subseteq\mathrm{Mat}_{m\times n}(K)$ be a $(d,r)$-rank-metric code over a discretely valued field $K$ and let 
    \[
        \mathcal{K}=\mathcal{L}\cap\mathrm{Mat}_{m\times n}(\Ocal_K).
    \]  
Is the projection $\overline{\mathcal{K}}$ of $\mathcal{K}$ modulo
 $\pi$ a $(d,r)$-rank-metric code?
\end{question}

\noindent
 In general the answer is no. We take first steps towards a conceptual framework to study which linear spaces $\mathcal{L}$ remain $(d,r)$-rank-metric codes when passing  from $K$ to its residue field $\overline{K}$. To this end, we need to control both the dimension and the minimal rank drop from $\Lcal$ to $\overline{\Kcal}$. We outline two approaches to this problem, the first one focusing on the behaviour of the minimal rank in the linear space when passing to $\overline{K}$ and the second on the behaviour of the dimension.

\subsection{Field extensions and rank-metric codes}
The first approach we propose to deal with \cref{quest:rank_metric_residue} relies on a skew algebra framework. More precisely, whenever  a ring $R$ admits a (cyclic) $G$-Galois extension $S$ of degree $n$, the skew algebra $S[G]$ is isomorphic to $\End_R(S)$, which is in turn isomorphic to $\Mat_{n}(R)$.
We remark that, in the context of \cref{quest:rank_metric_residue}, working with square matrices is not restrictive: indeed restricting $R$-endomorphisms of $S$ to a chosen free $R$-submodule of fixed rank $m$ yields $m\times n$ (rectangular) matrices, 
obtaining $(d,r)$-rank-metric codes in $\Mat_{m\times n}(R)$ from $(d,r+n-m)$-rank-metric codes in $\Mat_{n}(R)$.
This isomorphic representation of the matrix space has been used for constructing $(d,r)$-rank-metric codes with high minimum rank $r$ over finite fields in \cite{MR514618, MR791529}, general fields in \cite{MR1306980, MR2503966, augot2013rank, ACLN20+} and finite principal ideal rings in \cite{MR4038895}. Every square matrix can be represented as a \emph{$\sigma$-polynomial}, that is an $S$-linear combination of the powers of $\sigma$.  The advantages of this viewpoint are multiple. First, the $\sigma$-polynomial representation gives a natural lower bound on the rank of an endomorphism. Furthermore, $S[G]$ is isomorphic to a quotient of the skew polynomial ring $S[x;\sigma]$, where operations are computationally efficient; see e.g. \cite{MR1503119,MR2006329}. This is a crucial point exploited  for the development of fast decoding algorithms. Finally, this approach provides a natural notion of $S$-linearity of some spaces $\mathcal L \subseteq \Mat_{m\times n}(R)$. When $S/R$ is an extension of finite fields, $S$-linear codes have been shown to have generically the best parameters \cite{MR3756146,byrne2020partition}, while this is not the case for the $R$-linear ones \cite{antrobus2019maximal,byrne2020partition,gluesing2020sparseness,gruica2020common}.

In Section \ref{sec:skewalgebras} we focus on $G$-Galois extensions of local principal ideal rings. We first show that the skew algebra representations are naturally compatible with the canonical filtration given by the ideals in $\mO_\K$; cf.\ \cref{thm:compatibility}. 
We then prove that, also within this context, the $\sigma$-polynomial representation gives a natural lower bound on the rank of the corresponding endomorphism; cf.\ \cref{prop:innerrank_lowerbound}. As a consequence, we define the counterpart of Gabidulin and twisted Gabidulin codes over local principal ideal rings and show that, also in this more general framework, they are maximum rank distance (MRD) codes; cf.\ \cref{thm:sheekey_rings}.
Furthermore, we show in \cref{thm:MRDiff} that the parameters of a free rank-metric code over a finite local principal ideal ring are completely determined by its image over the residue field. We investigate the case of infinite rings in the last section. 

\subsection{Dimensions via convex hulls in Bruhat-Tits buildings}
In \cref{sec-bruhat}, we  establish a connection between the theory of rank-metric codes and so-called \textit{Bruhat-Tits buildings}. Bruhat-Tits buildings arise from the geometric representation theory of reductive groups. In this work, we focus on the Bruhat-Tits building $\mathfrak{B}_d$ associated to $\PGL(V)$, where $V$ is a $d$-dimensional vector space over $K$. The Bruhat-Tits building $\mathfrak{B}_d$ is equipped with a notion of convexity. In \cref{prop-must}, we derive a criterion in terms of this convexity for a decrease in dimension of a given linear space of matrices when passing to the residue field.

Our main tool is the notion of \textit{Mustafin varieties}, which was first introduced by Mustafin in \cite{mustafin1978nonarchimedean} in order to generalise Mumford's seminal work on the uniformization of curves to higher dimension. Mustafin varieties have ever since developed a life on their own, with applications in the theory of Shimura varieties \cite{MR1827019,gora2019local}, Chow quotients of Grassmannians \cite{keel2006geometry}, tropical geometry \cite{MR2861606, hahn2020mustafin}, Brill-Noether theory \cite{he2019degenerations}, $p$-adic nonabelian Hodge theory \cite{hahn2019strongly,hahn2020mustafinmodels} and computer vision \cite{hahn2020one}. Mustafin varieties are degenerations of projective spaces over the discretely valued field $K$ whose limits are varieties over the residue field $\overline{K}$. A key feature of Mustafin varieties is that their limits may be studied in terms of convex hulls in $\mathfrak{B}_d$. The key observation for our proof of \cref{prop-must} is that Mustafin varieties in some sense model the \textit{multi-projectivization} of rank-metric codes. The multi-projectivization of linear spaces, i.e.\ the closure of an affine linear space in a product of projective spaces, is a well-studied topic algebraic geometry \cite{zbMATH06547829}. A careful examination of the classification of Mustafin varieties obtained in \cite{hahn2020mustafin} reveals that, generically, linear spaces of matrices whose dimensions remain unchanged when passing to the residue field correspond to a unique distinguished component of a special Mustafin variety; cf.\ \cref{prop-metricmust}. This irreducible component corresponds to a unique vertex of a certain convex hull in $\mathfrak{B}_d$. In \cref{prop-must}, we distinguish this vertex in a special case which yields a certificate for a drop in the dimension when passing to the residue field.

\subsection{Structure of the paper}
In \cref{sec:skewalgebras}, we develop our conceptual approach via skew algebras. We review basic notions and establish compatibility results for skew algebras in terms of the canonical filtration given by the powers of the maximal ideal; cf.\ \cref{thm:compatibility}. Furthermore, we initiate the study of rank-metric codes over local principle ideal domains and generalise various classical constructions to this new setting; cf.\ \cref{thm:sheekey_rings}. In \cref{sec-bruhat}, we study MRDs in relation to Bruhat-Tits buildings and Mustafin varieties. This allows us to develop a sufficient criterion for a drop in dimension of a given MRD when passing to the residue field; cf. \cref{prop-must}.

\addtocontents{toc}{\protect\setcounter{tocdepth}{1}}
\subsection*{Acknowledgements}
    \noindent
     The authors wish to thank Bernd Sturmfels and the organizers of the \emph{LSSM working group} at the Max-Planck-Institute for Mathematics in the Sciences (MPI MiS), which has inspired this project in the initial form of ``Question 58'' \cite{LSSM}. The first author is partially supported by the EWJ Gateway Fellowship at U.C.\ Berkeley. The third author is supported by  the Swiss National Science Foundation through grant no.\ 187711. The authors also wish to thank the anonymous referee for their comments, which led to an improvement of the paper.
\addtocontents{toc}{\protect\setcounter{tocdepth}{2}}

	 \section{The framework of skew algebras}
	 \label{sec:skewalgebras}
	 
	 \noindent
	 In this section, we study linear spaces of matrices from the point of view of skew algebras. We show a way of expressing elements in $\Mat_n(\K)$ as \emph{$\sigma$-polynomials} where $\sigma$ is the generator of a cyclic Galois extension $\LL$ of $\K$ of degree $n$. One of the properties of the last construction is that, if $A\in\Mat_n(\K)$ is associated to the $\sigma$-polynomial
	 \[
	 f_A=a_0+a_1\sigma + \ldots + a_k\sigma^k \in \LL[G]
	 \]
	 of degree $k\leq n-1$, then the rank of $A$ is known to be at least $n-k$. We will show how one can compute $f_A\in S[G]$ for any $A$ given in $\Mat_n(R)$, where $S/R$ is a \emph{Galois extension of rings}, and how these constructions are \emph{compatible in terms of reduction}. We will use this last fact to discuss rank drops and consider some famous examples in the context of \emph{MRD codes}.
	 
	 \noindent
	 The present section is organized as follows. In Section \ref{sec:dual}, we present some basic results in connection to \emph{integral bases}  of $\LL/\K$. Section \ref{sec:0top} is devoted to Galois extensions of rings and associated skew algebras. \cref{thm:compatibility} is the main result of this section and relates skew algebras associated to $\mO_{\LL}/\mO_{\K}$ and  $\Res(\LL,k)/\Res(\K,k)$, respectively. In Section \ref{sec:rank-drops}, we give a way of associating $\pi$-adic expansions to elements $f$ in $\Mat_n(\K)$ and discuss rank-drops when $f$ is viewed modulo a power of the maximal ideal $\m_{\K}$. In Section  \ref{sec:rank_polynomials} we generalize the theory of rank-metric codes to local principle ideal rings: the main result of this section is Theorem \ref{th:norm}, which 
	 allows us to generalize finite field constructions of MRD codes over local principal ideal rings in Section \ref{sec:gabidulin}. In the same section, we prove, moreover, that when the ring is finite, then the image of a free code over the associated residue field determines whether the code is MRD or not. 
	 
	 \subsection{Dual and integral bases}\label{sec:dual}
	 
	 Let $\Trl$ denote the \emph{trace} of $\LL /\K$, i.e.\ the map
	 $$
	 	\Trl: \LL  \longrightarrow  \K, \quad
	 	\alpha  \longmapsto  \sum_{g\in G} g(\alpha).
	 $$
	 Since the extension $\LL / \K$ is separable, the $\K$-bilinear form 
	 $$\LL \times \LL \longrightarrow \K, \quad (\alpha,\beta)\longmapsto \Trl(\alpha\beta),$$
	that is induced by the trace map is nondegenerate.  As a consequence, any $\K$-basis $\alpha=(\alpha_1,\ldots,\alpha_n)$ of $\LL$ admits a unique \emph{dual basis}, that is a $\K$-basis $\alpha^*=(\alpha_1^*,\ldots,\alpha_n^*)$ of $\LL$ such that, for any $i,j\in\{1,\ldots,n\}$, one has $\Trl(\alpha_i\alpha_j^*)=\delta_{ij}$.
	 
	 \noindent
	 We fix an ordering on $G$ writing $G=\{g_1,\ldots,g_n\}$ and, for each vector $v=(v_1,\ldots,v_n)$ in $\LL^n$, we define the \emph{$G$-Moore matrix} $\MG_G(v)\in \Mat_n(\LL)$ as the matrix whose $(i,j)$-th entry is $g_i(v_j)$.
	 
	 \begin{lemma}\label{lem:GMoore}
	 	Let $\alpha=(\alpha_1,\ldots,\alpha_n)\in \LL^n$. Then the following hold:
	 	\begin{enumerate}[label=$(\arabic*)$]
	 		\item $\rk(\MG_G(\alpha))=\dim_{\K}\langle \alpha_1,\ldots,\alpha_n\rangle_\K$.
	 		\item if $\alpha$ is a $\K$-basis of $\LL$, then $\MG_G(\alpha)$ is invertible and we have:
	 		\begin{enumerate}[label=$(\alph*)$]
	 			\item $\MG_G(\alpha)^{-1}=\MG_G(\alpha^*)^\top.$
	 			\item for each $i\in\{1,\ldots, n\}$, one has $\val_{\LL}(\alpha_i)+\val_{\LL}(\alpha_i^*)\leq 0$.
	 		\end{enumerate}
	 	\end{enumerate}
	 \end{lemma}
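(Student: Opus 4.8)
The plan is to extract all three statements from a single computation: the identity $\MG_G(\alpha)^\top\MG_G(\alpha^*)=I_n$, valid whenever $\alpha$ is a $\K$-basis of $\LL$ with dual basis $\alpha^*$. Writing $G=\{g_1,\dots,g_n\}$ in the fixed order, the $(i,j)$-entry of $\MG_G(\alpha)^\top\MG_G(\alpha^*)$ is
\[
\sum_{k=1}^{n} g_k(\alpha_i)\, g_k(\alpha_j^*)=\sum_{k=1}^{n} g_k(\alpha_i\alpha_j^*)=\Trl(\alpha_i\alpha_j^*)=\delta_{ij},
\]
using that each $g_k$ is a ring homomorphism, the definition of $\Trl$, and the defining property of $\alpha^*$. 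Transposing gives $\MG_G(\alpha^*)^\top\MG_G(\alpha)=I_n$; since both factors are square matrices over the field $\LL$, this shows that $\MG_G(\alpha)$ is invertible with $\MG_G(\alpha)^{-1}=\MG_G(\alpha^*)^\top$, which is part $(2)(a)$ together with invertibility.

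For part $(2)(b)$, I would first record that every $g\in G$ preserves the valuation $\val_{\LL}$: since $\Norm{\LL}{\K}$ is $G$-invariant,
\[
\val_{\LL}(g(x))=\frac{1}{n}\val_{\K}\left(\Norm{\LL}{\K}(g(x))\right)=\frac{1}{n}\val_{\K}\left(\Norm{\LL}{\K}(x)\right)=\val_{\LL}(x).
\]
Hence $g(\m_{\LL})=\m_{\LL}$ for all $g$, so $\Trl(\m_{\LL})\subseteq\m_{\LL}\cap\K=\m_{\K}$, the last equality because $\LL/\K$ is unramified. Since $\Trl(\alpha_i\alpha_i^*)=1\notin\m_{\K}$, we conclude $\alpha_i\alpha_i^*\notin\m_{\LL}$, i.e.\ $\val_{\LL}(\alpha_i)+\val_{\LL}(\alpha_i^*)=\val_{\LL}(\alpha_i\alpha_i^*)\le 0$.

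For part $(1)$, set $r=\dim_{\K}\langle\alpha_1,\dots,\alpha_n\rangle_{\K}$. Permuting the entries of $\alpha$ permutes the columns of $\MG_G(\alpha)$ and changes neither side of the claimed equality, so I may assume $\alpha_1,\dots,\alpha_r$ is a $\K$-basis of $\langle\alpha_1,\dots,\alpha_n\rangle_{\K}$. Denoting by $v_1,\dots,v_n$ the columns of $\MG_G(\alpha)$, for $j>r$ write $\alpha_j=\sum_{k=1}^{r}c_{jk}\alpha_k$ with $c_{jk}\in\K$; applying each $g_i$ and using $g_i|_{\K}=\mathrm{id}$ yields $v_j=\sum_{k=1}^{r}c_{jk}v_k$, so $\rk(\MG_G(\alpha))\le r$. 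For the reverse inequality, extend $\alpha_1,\dots,\alpha_r$ to a $\K$-basis $\beta=(\alpha_1,\dots,\alpha_r,\beta_{r+1},\dots,\beta_n)$ of $\LL$; by part $(2)$ the matrix $\MG_G(\beta)$ is invertible, so its columns are $\LL$-linearly independent, and in particular so are its first $r$ columns, which are precisely $v_1,\dots,v_r$. Hence $\rk(\MG_G(\alpha))\ge r$, completing the proof.

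All of this is essentially bookkeeping with the trace pairing; the only step that is not purely formal is the $G$-invariance of $\val_{\LL}$ used in $(2)(b)$, and this follows at once from the norm formula defining $\val_{\LL}$, so I do not expect a genuine obstacle.
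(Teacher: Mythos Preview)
Your proof is correct and follows essentially the same route as the paper. The paper cites \cite{ACLN20+} for parts $(1)$ and $(2)(a)$ and then supplies only the one-line argument for $(2)(b)$, namely
\[
0=\val_{\LL}(1)=\val_{\LL}\Big(\sum_{g\in G}g(\alpha_i\alpha_i^*)\Big)\ge\min_{g\in G}\val_{\LL}(g(\alpha_i\alpha_i^*))=\val_{\LL}(\alpha_i)+\val_{\LL}(\alpha_i^*),
\]
which is exactly your argument phrased via the ultrametric inequality rather than via $\Trl(\m_{\LL})\subseteq\m_{\K}$; your direct proofs of $(1)$ and $(2)(a)$ are the standard ones underlying the cited reference. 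One small remark: the equality $\m_{\LL}\cap\K=\m_{\K}$ holds simply because $\val_{\LL}$ extends $\val_{\K}$, so the appeal to unramifiedness there is unnecessary.
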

	 
	 \begin{proof}
	 	This is \cite{ACLN20+} combined, for each $i\in\{1,\ldots, n\}$ with the following observation:
	 	\[0=\val_{\LL}(1)=\val_{\LL}\bigg(\sum_{g\in G}g(\alpha_i\alpha_i^*)\bigg)\geq \min_{g \in G}\{\val_\LL(g(\alpha_i\alpha_i^*))\}=\val_\LL(\alpha_i)+\val_{\LL}(\alpha_i^*).\]
	 \end{proof}

	 \begin{proposition}\label{prop:equivalence_basis}
	 	Let $\alpha=(\alpha_1,\ldots, \alpha_n)$ be a $\K$-basis of $\LL$ such that, for each $i\in\{1,\ldots,n\}$, one has  $\val_\LL(\alpha_i)\geq 0$. Then the following are equivalent:
	 	\begin{enumerate}[label=$(\arabic*)$]
	 		\item the image $\overline\alpha=(\overline\alpha_1,\ldots,\overline\alpha_n)$ of $\alpha$ in $\overline{\LL}$ is a $\overline\K$-basis of $\overline{\LL}$. 
	 		\item $\val_{\LL}(\det(\MG_G(\alpha)))=0$.
	 		\item $\MG_G(\alpha)\in \GL(n,\mO_\LL)$.
	 		\item $\val_\LL(\alpha_i^*)=0$ for every $i$.
	 		\item {$\alpha$ is an $\mO_\K$-basis of $\mO_\LL$.}
	 	\end{enumerate}
	 	Moreover, if $\alpha$ satisfies $(1)-(5)$, then, for each $i\in\{1,\ldots,n\}$, one has  $\val_\LL(\alpha_i)=0$.
	 \end{proposition}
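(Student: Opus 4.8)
The plan is to establish the chain of implications $(1)\Rightarrow(2)\Rightarrow(3)\Rightarrow(4)\Rightarrow(5)\Rightarrow(1)$ and then to deduce the final remark about the valuations, relying throughout on \cref{lem:GMoore}. First, for $(1)\Rightarrow(2)$, note that since each $\alpha_i$ has $\val_\LL(\alpha_i)=0$, the matrix $\MG_G(\alpha)$ has entries in $\mO_\LL$, so its reduction modulo $\m_\LL$ is well-defined and equals $\MG_{\overline{G}}(\overline\alpha)$ under the identification $G\cong\Gal(\overline\LL/\overline\K)$. By \cref{lem:GMoore}$(1)$ applied over the residue field, $\overline\alpha$ being a $\overline\K$-basis means $\MG_{\overline G}(\overline\alpha)$ is invertible over $\overline\LL$, i.e.\ $\overline{\det(\MG_G(\alpha))}\neq 0$, which says precisely $\val_\LL(\det(\MG_G(\alpha)))=0$. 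The implication $(2)\Leftrightarrow(3)$ is immediate: a matrix over $\mO_\LL$ lies in $\GL(n,\mO_\LL)$ if and only if its determinant is a unit, i.e.\ has valuation $0$.

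For $(3)\Rightarrow(4)$, I would use \cref{lem:GMoore}$(2)(a)$, namely $\MG_G(\alpha)^{-1}=\MG_G(\alpha^*)^\top$. Since $\alpha$ is a $\K$-basis of $\LL$ (here we invoke that it is a basis, which follows from the hypothesis once we know, e.g., via $(2)$, that the Moore matrix is invertible — or one argues the equivalences only among bases), $\MG_G(\alpha)\in\GL(n,\mO_\LL)$ forces $\MG_G(\alpha^*)^\top\in\GL(n,\mO_\LL)$ as well, so in particular every entry $g_i(\alpha_j^*)$ lies in $\mO_\LL$; taking $g_i=\mathrm{id}$ gives $\val_\LL(\alpha_j^*)\geq 0$ for all $j$. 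Combined with the inequality $\val_\LL(\alpha_i)+\val_\LL(\alpha_i^*)\leq 0$ from \cref{lem:GMoore}$(2)(b)$ and the standing hypothesis $\val_\LL(\alpha_i)=0$, we get $\val_\LL(\alpha_i^*)\leq 0$, hence $\val_\LL(\alpha_i^*)=0$. For $(4)\Rightarrow(5)$: the $\mO_\K$-span of $\alpha$ sits inside $\mO_\LL$ since $\val_\LL(\alpha_i)\geq 0$; conversely, given $x\in\mO_\LL$, write $x=\sum_i c_i\alpha_i$ with $c_i\in\K$, and recover $c_i=\Trl(x\alpha_i^*)$; since $x\alpha_i^*\in\mO_\LL$ (as $\val_\LL(\alpha_i^*)=0$) and the trace maps $\mO_\LL$ into $\mO_\K$ (the extension being unramified, or simply because $\Trl$ is a sum of Galois conjugates, each preserving $\mO_\LL$, landing in $\mO_\LL\cap\K=\mO_\K$), we get $c_i\in\mO_\K$, so $\alpha$ spans $\mO_\LL$ over $\mO_\K$; freeness follows since $\alpha$ is already $\K$-linearly independent. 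Finally $(5)\Rightarrow(1)$: reducing an $\mO_\K$-basis of $\mO_\LL$ modulo $\m_\LL$ yields, by $\mO_\LL/\m_\LL\cong\overline\LL$ and Nakayama (or flatness of $\mO_\LL$ over $\mO_\K$ together with $\mO_\LL\otimes_{\mO_\K}\overline\K\cong\overline\LL$), a $\overline\K$-basis of $\overline\LL$.

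The concluding ``moreover'' clause is then nearly automatic: if $\alpha$ satisfies the equivalent conditions, then from $(5)$ each $\alpha_i\in\mO_\LL$ so $\val_\LL(\alpha_i)\geq 0$; if some $\val_\LL(\alpha_i)>0$ then $\overline\alpha_i=0$ in $\overline\LL$, contradicting that $\overline\alpha$ is a basis by $(1)$ (a basis cannot contain the zero vector). Hence $\val_\LL(\alpha_i)=0$ for all $i$, which incidentally shows the standing hypothesis in the statement is redundant given any one of $(1)$–$(5)$ — this is presumably the point of stating it separately.

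I expect the main obstacle to be bookkeeping around which hypotheses are genuinely needed where: the statement assumes $\val_\LL(\alpha_i)=0$ throughout \emph{and} assumes $\alpha$ is a $\K$-basis, so I must be careful not to circularly invoke ``$\alpha$ is a basis'' when proving an equivalence that could be read as characterizing when a spanning set is a basis. The cleanest route is to treat the $\K$-basis hypothesis as given in all five items and simply verify the ring-theoretic statements; the only slightly delicate point is $(3)\Rightarrow(4)$, where one must extract $\val_\LL(\alpha_j^*)\geq 0$ from invertibility of the transposed Moore matrix over $\mO_\LL$ — this is where reading off the identity entry $g_i=\mathrm{id}$ of $\MG_G(\alpha^*)$ does the job, and then \cref{lem:GMoore}$(2)(b)$ upgrades $\geq 0$ to $=0$.
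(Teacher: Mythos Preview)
Your proof is correct and follows essentially the same approach as the paper: both rely on \cref{lem:GMoore} for the Moore-matrix identity $\MG_G(\alpha)^{-1}=\MG_G(\alpha^*)^\top$ and the inequality $\val_\LL(\alpha_i)+\val_\LL(\alpha_i^*)\le 0$, and both close the cycle via the easy implication $(5)\Rightarrow(1)$. The one cosmetic difference is that you prove $(4)\Rightarrow(5)$ by recovering coefficients as traces $c_i=\Trl(x\alpha_i^*)$, whereas the paper proves $(3)\Rightarrow(5)$ by writing $\lambda^\top=\MG_G(\alpha)^{-1}(g_1(\beta),\ldots,g_n(\beta))^\top$; unpacking the latter using $\MG_G(\alpha)^{-1}=\MG_G(\alpha^*)^\top$ gives exactly $\lambda_i=\sum_j g_j(\alpha_i^*)g_j(\beta)=\Trl(\alpha_i^*\beta)$, so the two arguments are the same computation in different notation.
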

	 
	 \begin{proof}
	 	We start by showing that $(1) \Leftrightarrow (2)$. For this, let $\Delta=\det(\MG_G(\alpha))$ and note that having $\val_\LL(\alpha_i)\geq 0$ for each $i$ implies that $\val_\LL(\Delta)\geq 0$. Now, Lemma \ref{lem:GMoore} yields that $\overline\alpha$ is a $\overline\K$-basis of $\overline{\LL}$ if and only if $\det(\MG_G(\overline\alpha))\neq 0$.
	 	We conclude by observing that $\det(\MG_G(\overline\alpha))$ equals the image $\overline\Delta$ of $\Delta$ in $\overline{\LL}$ and so $\overline\alpha$ is a $\overline\K$-basis of $\overline{\LL}$ if and only if $\val_{\LL}(\Delta)=0$.
	 	
	 	The equivalence $(2) \Leftrightarrow (3)$ follows from basics of linear algebra. 
	 	To prove the equivalence of $(3)$ and $(4)$, we rely on Lemma \ref{lem:GMoore}(2.a)-(2.b): indeed, since $\MG_G(\alpha)^{-1}=\MG_G(\alpha^*)^\top$, we have that each element  $\alpha_i^*$ belongs to $\mO_\LL$ if and only if $\MG_G(\alpha)^{-1}\in \Mat_n(\mO_\LL)$.

	 	In order to show that $(3) \Rightarrow (5)$,  we let $\beta \in \mO_{\LL}$. Since $\alpha$ is a $\K$-basis of $\LL$, there exist unique $\lambda_1,\ldots,\lambda_n \in \K$ such that $\beta=\sum_i\lambda_i\alpha_i$. Fix such $\lambda_i$s and set $\lambda=(\lambda_1,\ldots,\lambda_n)$. Then we have $\lambda^\top=\MG_G(\alpha)^{-1}(g_1(\beta),\ldots,g_n(\beta))^\top$ and,  $\MG_G(\alpha)$ being invertible in $\Mat_{n}(\mO_{\LL})$, we derive that $\lambda \in \mO_{\K}^n$.
	 	
	 	We conclude the proof by observing that $(5)$ clearly implies $(1)$.
	 \end{proof}
	 
	 \begin{definition}
	 	A $\K$-basis of $\LL$ satisfying any of the equivalent properties in Proposition \ref{prop:equivalence_basis} is called an \emph{integral basis}.
	 \end{definition}
	 
	 Note that the existence of integral $K$-bases is ensured, for instance, by the fact that $\Ocal_L$ is a free module over $\Ocal_K$; cf.\ \cref{prop:equivalence_basis}(5).
	 
	 \subsection{Galois extensions of rings}\label{sec:0top}
	 Let $R$ be a commutative ring and let $S$ be a commutative $R$-algebra. Denote by $\End_R(S)$ the endomorphisms of $S$ as an $R$-module and by $\Aut(S/R)$ the collection of automorphisms of $S$ as an $R$-algebra. If $\phi:G\rightarrow\Aut(S/R)$
	 is a homomorphism, then the $\phi$-\emph{skew group algebra} of $G$ with coefficients in $S$ is the free $S$-module $S[G]_{\phi}$ with basis $G$ endowed with the following multiplication:
	 \begin{center}
	 	for $g,h\in G$ and $a_g,a_h\in S$, one has $(a_gg)(a_hh)=a_g \phi(g)(a_h) gh$.    
	 \end{center}
	 With this definition, it is not difficult to show $S[G]_\phi$ is a (possibly non-commutative) ring.
	 In our applications, $\phi$ will always be an isomorphism, which we will treat as the identity map, as in \cite{ACLN20+}, and we will therefore only write $S[G]$ instead of $S[G]_{\phi}$.
	 
	 \begin{example}\label{ex:skew-algebras}
	 	~
	 	\begin{enumerate}[label=$(\arabic*)$]
	 		\item When $\phi$ is the trivial map, then $S[G]_{\phi}$ is the usual group ring of $G$ over $S$. 
	 		\item When $R=\mO_\K$ and $S=\mO_\LL$, then the map
	 		\[\phi:G=\Aut(\LL/\K)\longrightarrow\Aut(\mO_{\LL}/\mO_{\K}), \quad f\longmapsto \phi(f)=f_{|\mO_\LL}\]
	 		is a well-defined group isomorphism.
	 		Indeed, thanks to \cite[Lem.~3.1]{MR1915966}, the elements of $\mO_{\LL}$ are integral over $\mO_{\K}$ and thus each element of $\Aut_{\K}(\LL)$ stabilizes $\mO_{\LL}$.
	 		Assuming $\phi$ to be the identity map, we get 
	 		\[
	 		\mO_{\LL}[G]=\mO_{\LL}[G]_{\phi}=\left\{\sum_{g\in G}a_g g \in \LL[G] : a_g \in \mO_{\LL}\right\}.
	 		\]
	 		\item For each nonnegative integer $k$, we give two equivalent ways of constructing what in the remaining part of this section will be denoted by $\m_{\LL}^k[G]$. We observe that the map 
	 		\[
	 		\tilde{\phi}: \Aut(\mO_{\LL}/\mO_{\K})\longrightarrow \Aut(\m_{\LL}^k/\mO_{\K}), \quad f \longmapsto \tilde{\phi}(f)=f_{|\m_\LL^k}
	 		\]
	 		is a well defined isomorphism. Set now $\phi:G\rightarrow \Aut(\m_{\LL}^k/\mO_{\K})$ to be $\tilde{\phi}$ precomposed with the isomorphism from (2). Then it follows that 
	 		\[
	 		\m_{\LL}^k[G]=\m_{\LL}^k[G]_{\phi}=\left\{\sum_{g\in G}a_gg \in \LL[G] : a_g\in\m_{\LL}^k\right\}.
	 		\]
	 	\end{enumerate}
	 \end{example}
	 
	 \begin{definition}
	 	A \emph{$G$-Galois extension} is an extension $S/R$ of commutative rings such that $G$ is a subgroup of $\Aut(S/R)$ and the following hold: 
	 	\begin{enumerate}[label=$(\arabic*)$]
	 		\item $S^G=\{s\in S : \textup{ for all $g\in G$ one has }g(s)=s\}=R$,
	 		\item the map $\tau: S\otimes_RS\rightarrow \oplus_{g\in G}S$, defined by
	 		\[x\otimes y \longmapsto \tau(x\otimes y)= (xg(y))_{g\in G}\]
	 		is bijective. 
	 	\end{enumerate}
	 \end{definition}
	 
	 \noindent
	 The following is a weaker version of \cite[Thm.~1.3]{ChHaRo/65}, which can also be found in \cite[Thm.~1.6]{Greither/92}.
	 
	 \begin{proposition}\label{prop:iso-phi}
	 	Let $S/R$ be an extension of commutative rings. Assume that $G$ is a subgroup of $\Aut(S/R)$ and that $R=S^G$. 
	 	Then the following statements are equivalent.
	 	\begin{enumerate}[label=$(\arabic*)$]
	 		\item The extension $S/R$ is $G$-Galois.
	 		\item The ring $S$ is a finitely generated projective $R$-module and the map 
	 		\[
	 		\varphi: S[G]\longrightarrow \End_{R}(S), \quad \mathbf{a}=\sum_{g\in G}a_gg \longmapsto (\varphi(\mathbf{a}): x \mapsto \sum_{g\in G}a_gg(x))
	 		\]
	 		is an isomorphism of $R$-algebras.
	 		\item For any nontrivial $g\in G$ and maximal ideal $\m$ of $S$, there exists $s\in S$ such that $g(s)-s$ is not in $\m$.
	 	\end{enumerate}
	 \end{proposition}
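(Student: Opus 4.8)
The plan is to prove the cyclic chain $(1)\Rightarrow(2)$, $(2)\Rightarrow(1)$, $(1)\Rightarrow(3)$, $(3)\Rightarrow(1)$, the unifying tool being a \emph{Galois coordinate system}: a finite family of pairs $(x_i,y_i)_i$ with $x_i,y_i\in S$ and $\sum_i x_i\,g(y_i)=\delta_{g,1}$ for all $g\in G$. If $S/R$ is $G$-Galois, such a system exists, being the preimage under the bijection $\tau$ of the tuple $(\delta_{g,1})_{g\in G}\in\bigoplus_{g\in G}S$. The computational heart of everything is then the reproducing identity
\[
z=\sum_i\Big(\sum_{g\in G}g(y_iz)\Big)x_i\qquad\text{for every }z\in S,
\]
which follows by expanding and using $\sum_i x_i g(y_i)=\delta_{g,1}$; note each coefficient $\sum_{g}g(y_iz)$ lies in $S^G=R$.

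From a Galois coordinate system I would derive $(1)\Rightarrow(2)$ as follows. The displayed identity exhibits $\big(x_i,\ z\mapsto\sum_{g}g(y_iz)\big)_i$ as a finite dual basis, so $S$ is finitely generated projective over $R$. Injectivity of $\varphi$: if $\sum_g a_g g(z)=0$ for all $z$, substitute $z=y_iw$ and contract against $x_i$ to get $a_1w=0$ for all $w$, hence $a_1=0$; composing with the operators $h\in G$ kills every $a_h$. Surjectivity of $\varphi$: for $f\in\End_R(S)$ the element $\mathbf a_f=\sum_g\big(\sum_i f(x_i)g(y_i)\big)g\in S[G]$ satisfies $\varphi(\mathbf a_f)(z)=\sum_i f(x_i)\sum_g g(y_iz)=f\big(\sum_i(\sum_g g(y_iz))x_i\big)=f(z)$ by the displayed identity and $R$-linearity of $f$. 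For $(2)\Rightarrow(1)$ one shows conversely that an isomorphism $\varphi$ forces a Galois coordinate system to exist — this is the substance of \cite[Thm.~1.3]{ChHaRo/65}, see also \cite[Thm.~1.6]{Greither/92} — whereupon $e=\sum_i x_i\otimes y_i$ is a separability idempotent and $\tau$ is checked to be bijective with inverse assembled from the $(1\otimes h)\cdot e$, $h\in G$. Finally $(1)\Rightarrow(3)$ is immediate: for $1\neq g$ one has $\sum_i x_iy_i=1$ and $\sum_i x_ig(y_i)=0$, so $\sum_i x_i\big(y_i-g(y_i)\big)=1$, whence for any maximal ideal $\m$ of $S$ some $y_i-g(y_i)$ escapes $\m$ and $s=y_i$ works.

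The one genuinely delicate implication is $(3)\Rightarrow(1)$. Condition $(3)$ says exactly that for each $1\neq g\in G$ the ideal $I_g=\langle\,g(s)-s:s\in S\,\rangle$ of $S$ is the whole ring; the task is to upgrade the resulting unit‑ideal identities $1=\sum_j t_{g,j}(g(s_{g,j})-s_{g,j})$ to a single separability idempotent $e\in S\otimes_R S$ with $\tau(e)=(\delta_{g,1})_g$, equivalently to show that the graphs of the elements of $G$ cut $\Spec(S\otimes_R S)$ into pairwise disjoint pieces that, by $S^G=R$, cover it. I expect this to be the main obstacle. The standard route is: observe $S$ is integral over $R$ since $\prod_{g\in G}(X-g(s))\in R[X]$ annihilates each $s\in S$; reduce modulo maximal ideals and localize so that the assertion becomes the classical fact that a finite group acting on a field with fixed field the base makes the extension Galois with that group; and then glue the local separability idempotents, the hypothesis $I_g=S$ guaranteeing the graphs remain disjoint after localization. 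Since the proposition is only quoted here as a known black box, it is legitimate to present it directly as a (weak) consequence of \cite[Thm.~1.3]{ChHaRo/65} and \cite[Thm.~1.6]{Greither/92}.
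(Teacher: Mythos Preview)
Your proposal is correct, and in fact goes well beyond what the paper does: the paper gives no proof at all for this proposition, simply introducing it as ``a weaker version of \cite[Thm.~1.3]{ChHaRo/65}, which can also be found in \cite[Thm.~1.6]{Greither/92}'' and then stating it as a black box. Your closing sentence anticipates exactly this.

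The sketch you supply is the standard Chase--Harrison--Rosenberg argument and is sound. The Galois coordinate system $(x_i,y_i)$ extracted from $\tau^{-1}((\delta_{g,1})_g)$ is the right organizing object; your reproducing identity, the dual-basis argument for projectivity, and the explicit inverse $\mathbf a_f$ for surjectivity of $\varphi$ all check out line by line. The injectivity step is also fine once one notes, as you do, that right-multiplying $\mathbf a$ by $h^{-1}$ in $S[G]$ shifts the identity-coefficient to $a_h$. Your handling of $(1)\Rightarrow(3)$ via $\sum_i x_i(y_i-g(y_i))=1$ is clean, and your honest flagging of $(3)\Rightarrow(1)$ as the delicate direction---integrality via $\prod_g(X-g(s))$, reduction to residue fields, then patching separability idempotents---is the correct outline of what Chase--Harrison--Rosenberg actually do. Since the paper itself is content to cite, either your full sketch or the bare citation would be acceptable here; the former is more informative, the latter matches the paper.
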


	 \begin{example}\label{ex:G-Galois}
	 	~
	 	\begin{enumerate}[label=$(\arabic*)$]
	 		\item The extension $\mO_{\LL}/\mO_{\K}$ is $G$-Galois. Indeed $G$ can be viewed as a subgroup of $\Aut(\mO_{\LL}/\mO_{\K})$ via the map $\phi$ from Example \ref{ex:skew-algebras}(2) and  $\mO_{\LL}^G=\mO_{\K}$ because $G$ is the Galois group of $\LL/\K$. The extension $\LL/\K$ being unramified, 
	 		Proposition \ref{prop:iso-phi}(3)  yields that $\mO_{\LL}/\mO_{\K}$ is $G$-Galois.
	 		\item For each nonnegative integer $k$, the extension $\Res(\LL,k)/\Res(\K,k)$ is $G$-Galois. The last inclusion is here to be interpreted in terms of the following natural commutative diagram
	 		\[
	 		\xymatrix{
	 			\mO_{\K}\ar[d] \ar[rd] \ar[r] & \ar[d] \mO_{\LL} \\
	 			\mO_{\K}/\m_{\K}^k \ar@{.>}[r] & \mO_{\LL}/\m_{\LL}^k,
	 		}
	 		\]
	 		where the dotted map is injective because $\LL/\K$ is unramified. For each $f$ in $ \Aut(\mO_{\LL}/\mO_{\K})$ denote by $\overline{f}_k$ the automorphism induced on $\Res(\LL,k)/\Res(\K,k)$ by $f$. Then the map \[
	 		\Aut(\mO_{\LL}/\mO_{\K}) \longrightarrow \Aut(\Res(\LL,k)/\Res(\K,k)), \quad f \longmapsto \overline{f}_k
	 		\]
	 		precomposed with the map $\phi$ from Example \ref{ex:skew-algebras}(2) gives an isomorphism $$G\longrightarrow\Aut(\Res(\LL,k)/\Res(\K,k))$$ and, moreover, we have $\Res(\LL,k)^G=\Res(\K,k)$. Applying Proposition \ref{prop:iso-phi}(3), we get that $\Res(\LL,k)/\Res(\K,k)$ is $G$-Galois.
	 	\end{enumerate}
	 \end{example}

	 \noindent
	 For each nonnegative integer $k$, define
	 $$\End_\K^k(\LL)=\{f\in\End_\K(\LL) \mid f(\mO_\LL)\subseteq \m_{\LL}^k\}.$$
	 
	 \begin{proposition}\label{prop:corresp-phi}
	 	For each nonnegative integer $k$, one has $$\varphi(\m_\LL^k[G])=\End_{\K}^k(\LL).$$
	 \end{proposition}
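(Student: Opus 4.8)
The plan is to establish the two inclusions separately: the inclusion $\varphi(\m_\LL^k[G]) \subseteq \End_\K^k(\LL)$ is essentially immediate, while the reverse inclusion amounts to recovering the coefficients of a $\sigma$-polynomial from its action on an integral basis and checking that they are integral of the right level.

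For $\varphi(\m_\LL^k[G]) \subseteq \End_\K^k(\LL)$, I would take $\mathbf{a} = \sum_{g \in G} a_g g$ with every $a_g \in \m_\LL^k$ and evaluate on $x \in \mO_\LL$. Since every element of $G$ stabilizes $\mO_\LL$ (Example~\ref{ex:skew-algebras}(2)), each $g(x)$ lies in $\mO_\LL$, and as $\m_\LL^k$ is an ideal of $\mO_\LL$ we get $a_g g(x) \in \m_\LL^k$; summing over $g$ gives $\varphi(\mathbf{a})(x) = \sum_g a_g g(x) \in \m_\LL^k$, so $\varphi(\mathbf{a}) \in \End_\K^k(\LL)$.

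For the reverse inclusion, I would fix $f \in \End_\K^k(\LL)$ and, using that $\varphi$ is an isomorphism (Proposition~\ref{prop:iso-phi}), write $f = \varphi(\mathbf{a})$ with $\mathbf{a} = \sum_{i=1}^n a_{g_i} g_i$ uniquely determined in $\LL[G]$. Next I would pick an integral basis $\alpha = (\alpha_1, \ldots, \alpha_n)$ of $\LL/\K$; such a basis exists because $\mO_\LL$ is a finitely generated projective — hence free, over the local ring $\mO_\K$ — module by Example~\ref{ex:G-Galois}(1) and Proposition~\ref{prop:iso-phi}(2), and any $\mO_\K$-basis of $\mO_\LL$ is integral by Proposition~\ref{prop:equivalence_basis}(5). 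Evaluating $f$ on the basis, for each $j$ one has $f(\alpha_j) = \sum_{i=1}^n a_{g_i}\, g_i(\alpha_j)$, which in terms of the $G$-Moore matrix reads
\[
(f(\alpha_1), \ldots, f(\alpha_n)) = (a_{g_1}, \ldots, a_{g_n}) \cdot \MG_G(\alpha),
\qquad\text{hence}\qquad
(a_{g_1}, \ldots, a_{g_n}) = (f(\alpha_1), \ldots, f(\alpha_n)) \cdot \MG_G(\alpha)^{-1}.
\]
Now $\alpha_j \in \mO_\LL$ forces $f(\alpha_j) \in f(\mO_\LL) \subseteq \m_\LL^k$, while $\MG_G(\alpha) \in \GL(n, \mO_\LL)$ by Proposition~\ref{prop:equivalence_basis}(3) (applicable since $\alpha$ is integral), so $\MG_G(\alpha)^{-1} \in \Mat_n(\mO_\LL)$ as well. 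As $\m_\LL^k$ is an ideal of $\mO_\LL$, each $a_{g_i}$ — being an $\mO_\LL$-linear combination of the $f(\alpha_j)$ — lies in $\m_\LL^k$. Thus $\mathbf{a} \in \m_\LL^k[G]$ and $f \in \varphi(\m_\LL^k[G])$, which finishes the argument.

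The only genuine subtlety is in the second inclusion: one must invert the change-of-basis matrix while staying inside $\mO_\LL$, and this is precisely what the notion of integral basis — concretely, the equivalence $(1)\Leftrightarrow(3)$ of Proposition~\ref{prop:equivalence_basis} — is designed to guarantee. An alternative, slightly slicker route would be to first treat $k=0$, where $\varphi(\mO_\LL[G]) = \End_\K^0(\LL)$ is just Proposition~\ref{prop:iso-phi} applied to the $G$-Galois extension $\mO_\LL/\mO_\K$, and then to multiply through by $\pi^k$, using $\m_\LL^k = \pi^k \mO_\LL$ (the extension being unramified) together with $\m_\LL^k[G] = \pi^k\,\mO_\LL[G]$ and $\End_\K^k(\LL) = \pi^k\,\End_\K^0(\LL)$; but the essential point — controlling the integrality level of the coefficients — is the same in either approach.
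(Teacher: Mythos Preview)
Your proof is correct and follows essentially the same approach as the paper: both establish the nontrivial inclusion by evaluating the endomorphism on an integral basis, writing the resulting linear system via the $G$-Moore matrix, and inverting while staying inside $\mO_\LL$. The only cosmetic difference is that the paper expresses $\MG_G(\alpha)^{-1}$ as $\MG_G(\alpha^*)^\top$ via Lemma~\ref{lem:GMoore}(2.a) and then appeals to $\val_\LL(\alpha_i^*)=0$ (Proposition~\ref{prop:equivalence_basis}(4)) to bound the valuations of the coefficients, whereas you invoke Proposition~\ref{prop:equivalence_basis}(3) directly to get $\MG_G(\alpha)^{-1}\in\Mat_n(\mO_\LL)$ and conclude via the ideal property of $\m_\LL^k$; these are equivalent formulations of the same step.
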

	 
	 \begin{proof}
	 	Fix $k$. 
	 	The inclusion $\varphi(\m_\LL^k[G])\subseteq \End_{\K}^k(\LL)$ is clear so we show the other one. 
	 	First, choose an ordering on the elements of $G=\{g_1,\ldots, g_n\}$.
	 	Let $f \in \End_\K^k(\LL)$ and let $\mathbf{a}=\sum_i a_ig_i \in \LL[G]$  be such that $\varphi(\mathbf{a})=f$. Fix an integral $\K$-basis $\alpha=(\alpha_1,\ldots, \alpha_n)$ of $\LL$ and,
	 	for each index $j$, set  $$\beta_j=f(\alpha_j)=\varphi(\mathbf{a})(\alpha_j)=\sum_{i=1}^n a_i g_i(\alpha_j).$$
	 	This gives a linear system 
	 	$$(a_1,\ldots, a_n) \MG_G(\alpha)=(\beta_1,\ldots, \beta_n),$$
	 	which, thanks to Lemma \ref{lem:GMoore}, can be rewritten as
	 	\begin{equation}\label{eq:aiMoore}
	 		(a_1,\ldots, a_n)=(\beta_1,\ldots, \beta_n)\MG_G(\alpha^*)^\top.\end{equation}
	 	Now, for each $i\in\{1,\ldots,n\}$, we have $\val_{\LL}(\alpha_i^*)=0$ and so \eqref{eq:aiMoore} together with the definition of valuation yields $\val_{\LL}(a_j)\ge \min_i\{\val_{\LL}(\beta_i)\}\ge k$. This concludes the proof.
	 \end{proof}
	 
	 \noindent
	 Let $\varphi:\LL[G]\rightarrow \End_\K(\LL)$ denote the isomorphism given by Proposition \ref{prop:iso-phi}, where $\phi$ is taken to be the identity map.
	 For each nonnegative integer $k$, for $f\in\End_{\K}^0(\LL)$, and for $\mathbb{M}\in\{\K,\LL\}$, we write:
	 \begin{enumerate}[label=$(\arabic*)$]
	 	\item $\varphi_k$ instead of $\varphi_{|\m^k[G]}$,
	 	\item $\overline{\varphi}_k$ for the isomorphism $\Res(\K,k)[G]\rightarrow\End_{\Res(\LL,k)}(\Res(\LL,k))$ induced by $\varphi$,
	 	\item $\pi_{\MM}^{(k)}$ for the canonical projection $\mO_{\MM}\rightarrow \Res(\MM,k)$,  \item $\overline{f}_k$ for the induced endomorphism of $\Res(\LL,k)$.
	 \end{enumerate}
	 Note that the map in (2) is well-defined: indeed, thanks to Example \ref{ex:G-Galois}(2), the extension $\Res(\LL,k)/\Res(\K,k)$ is $G$-Galois and so $\overline{\varphi}_k$ exists by Proposition \ref{prop:iso-phi}. 
	 
	 If $V$ is an $n$-dimensional vector space over a field $\MM$ and  $\alpha$ is an $\MM$-basis of $V$, we denote by $\psi_{\alpha}$ the map  $\End_\MM(V) \rightarrow \Mat_n(\MM)$ that sends each endomorphism to its matrix representation with respect to the basis $\alpha$. 
	 
	 \begin{theorem}\label{thm:compatibility}
	 	Let $k$ be a nonnegative integer, and let $\alpha$ be an integral $\K$-basis of $\LL$. The following hold:
	 	\begin{enumerate}[label=$(\arabic*)$]
	 		\item $\m_\LL^k[G]$ is the kernel of the map 
	 		$$\mO_\LL[G]\longrightarrow \Res(\LL,k)[G], \quad \sum_{g\in G}a_gg\longmapsto \sum_{g\in G}\pi_{\LL}^{(k)}(a_g)g,$$ 
	 		\item $\End_{\K}^k(\LL)$ is the kernel of the map 
	 		$$\End_\K^0(\LL)\longrightarrow \End_{\Res(\K,k)}(\Res(\LL,k)), \quad f\longmapsto \overline{f}_k,$$
	 		\item $\psi_{\alpha}(\End_{\K}^k(\LL))=\Mat_n(\m_{\K}^k)$,
	 		\item the following diagram is commutative
	 		$$
	 		\xymatrix{
	 			0 \ar[r] & \m_{\LL}^k[G] \ar[d]^{\varphi_k} \ar[r] & \mO_{\LL}[G] \ar[r] \ar[d]^{\varphi_0} & \Res(\LL,k)[G] \ar[r] \ar[d]^{\overline{\varphi}_k} & 0 \\
	 			0 \ar[r] & \End_{\K}^k(\LL) \ar[r] \ar[d]^{\psi_\alpha} & \End_\K^0(\LL) \ar[d]^{\psi_\alpha} \ar[r] & \End_{\Res(\K,k)}(\Res(\LL,k)) \ar[r] \ar[d]^{\psi_{\overline{\alpha}}}& 0 \\
	 			0 \ar[r] & \Mat_n(\m_{\LL}^k)  \ar[r] & \Mat_n(\mO_{\K}) \ar[r]  & \Mat_n(\Res(\K,k)) \ar[r]  & 0 
	 		}
	 		$$
	 		where all the rows are exact and the vertical maps are isomorphisms.
	 	\end{enumerate}
	 \end{theorem}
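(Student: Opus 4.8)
The plan is to treat parts (1) and (2) as essentially the unravelling of definitions, to isolate part (3) as the one genuinely computational statement---the only place where the hypothesis that $\alpha$ is \emph{integral} enters---and then to obtain part (4) by assembling the first three parts with \cref{prop:iso-phi} and \cref{prop:corresp-phi}. Throughout I would use repeatedly that, $\LL/\K$ being unramified, $\m_\LL^k=\pi^k\mO_\LL=\m_\K^k\mO_\LL$, so that reduction modulo $\m_\LL^k$ is compatible with reduction modulo $\m_\K^k$ and, crucially, every $\mO_\K$-linear self-map of $\mO_\LL$ automatically stabilises $\m_\LL^k$. For (1): the displayed map is defined coefficientwise through $\pi_\LL^{(k)}\colon\mO_\LL\to\Res(\LL,k)$, whose kernel is $\m_\LL^k$, so its kernel is $\{\sum_{g}a_gg:a_g\in\m_\LL^k\}=\m_\LL^k[G]$ by \cref{ex:skew-algebras}(3); that this map respects the skew multiplication uses that the $G$-action on $\Res(\LL,k)$ is the reduction of the one on $\mO_\LL$, i.e.\ the identification of \cref{ex:G-Galois}(2). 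For (2): first $\End_\K^k(\LL)\subseteq\End_\K^0(\LL)$ since $\m_\LL^k\subseteq\mO_\LL$, and for $f\in\End_\K^0(\LL)$ the reduction $\overline{f}_k$ makes sense because $f(\m_\LL^k)=\pi^kf(\mO_\LL)\subseteq\pi^k\mO_\LL=\m_\LL^k$; then $\overline{f}_k=0$ if and only if $f(\mO_\LL)\subseteq\m_\LL^k$, i.e.\ $f\in\End_\K^k(\LL)$.

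For part (3), fix an integral basis $\alpha=(\alpha_1,\dots,\alpha_n)$; by \cref{prop:equivalence_basis} it is an $\mO_\K$-basis of $\mO_\LL$, so $\mO_\LL=\bigoplus_i\mO_\K\alpha_i$ and hence $\m_\LL^k=\pi^k\mO_\LL=\bigoplus_i\m_\K^k\alpha_i$. Writing a map $f\in\End_\K(\LL)$ via its matrix $(c_{ij})$ with respect to $\alpha$, so that $f(\alpha_j)=\sum_ic_{ij}\alpha_i$ with $c_{ij}\in\K$, the $\mO_\K$-linearity of $f$ together with $\mO_\LL=\bigoplus_j\mO_\K\alpha_j$ shows that $f(\mO_\LL)\subseteq\m_\LL^k$ if and only if $f(\alpha_j)\in\m_\LL^k$ for all $j$; by the direct-sum description of $\m_\LL^k$ the latter is equivalent to $c_{ij}\in\m_\K^k$ for all $i,j$. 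This gives $\psi_\alpha(\End_\K^k(\LL))=\Mat_n(\m_\K^k)$, and in particular $\psi_\alpha(\End_\K^0(\LL))=\Mat_n(\mO_\K)$ for $k=0$.

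For part (4), I would check separately exactness of the rows, bijectivity of the verticals, and commutativity of the squares. The top row is exact because $\pi_\LL^{(k)}$ is surjective with kernel identified in (1); the bottom row is exact because entrywise reduction $\Mat_n(\mO_\K)\to\Mat_n(\Res(\K,k))$ is surjective with kernel $\Mat_n(\m_\K^k)$; the middle row has kernel $\End_\K^k(\LL)$ by (2), and its surjectivity I would postpone. The verticals: $\varphi_0$ and $\varphi_k$ are isomorphisms by \cref{prop:iso-phi} and \cref{prop:corresp-phi} (the latter applied with general $k\ge 0$, $k=0$ included); $\overline{\varphi}_k$ is an isomorphism by \cref{prop:iso-phi} for the $G$-Galois extension $\Res(\LL,k)/\Res(\K,k)$ of \cref{ex:G-Galois}(2); $\psi_\alpha$ restricted to $\End_\K^0(\LL)$, resp.\ $\End_\K^k(\LL)$, is an isomorphism onto $\Mat_n(\mO_\K)$, resp.\ $\Mat_n(\m_\K^k)$, by part (3) and bijectivity of $\psi_\alpha\colon\End_\K(\LL)\to\Mat_n(\K)$; and $\psi_{\overline{\alpha}}$ is an isomorphism because reducing $\mO_\LL=\bigoplus_i\mO_\K\alpha_i$ modulo $\pi^k$ makes the image of $\alpha$ a free $\Res(\K,k)$-basis of $\Res(\LL,k)$. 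The two left squares commute since there the verticals are restrictions of the neighbouring ones; for the right squares one verifies that $\overline{\varphi(\mathbf{a})}_k$ and the image of $\mathbf{a}$ under $\overline{\varphi}_k$ both act on $\Res(\LL,k)$ by $\overline{x}\mapsto\sum_g\overline{a_g}\,g(\overline{x})$, and that the matrix of $\overline{f}_k$ in the reduced basis is the entrywise reduction of the matrix of $f$ in $\alpha$. Surjectivity of the middle map now follows by a chase: lift an element of $\End_{\Res(\K,k)}(\Res(\LL,k))$ through $\overline{\varphi}_k$, then through the surjection $\mO_\LL[G]\to\Res(\LL,k)[G]$, then push forward by $\varphi_0$.

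I do not expect a serious obstacle: the theorem organises facts that are already available. The one delicate point is part (3)---it is precisely the integral-basis hypothesis that forces the module-theoretic condition $f(\mO_\LL)\subseteq\m_\LL^k$ to become the entrywise condition $(c_{ij})\in\Mat_n(\m_\K^k)$---and, relatedly, one must consistently use the unramified identification $\m_\LL^k=\pi^k\mO_\LL=\m_\K^k\mO_\LL$, which enters at almost every step (well-definedness of $\overline{f}_k$, the direct-sum description of $\m_\LL^k$, the basis of $\Res(\LL,k)$). The most tedious part is verifying commutativity of all four squares, but this is routine bookkeeping.
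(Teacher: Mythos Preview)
Your proposal is correct and follows essentially the same approach as the paper's proof, which is very terse (parts (1)--(3) are each dismissed in a single sentence, and (4) cites \cref{prop:iso-phi} for the vertical isomorphisms and deduces commutativity of the right squares from that of the left squares plus exactness). You are in fact more careful than the paper on two points: you explicitly address surjectivity of the middle row via a chase (the paper just asserts exactness from (1) and (2), which strictly speaking only identify the kernels), and you invoke \cref{prop:corresp-phi} for the bijectivity of $\varphi_k$, which the paper leaves implicit. The only stylistic difference is that you verify the right-hand squares by direct computation, whereas the paper obtains them formally from the left squares and exactness; both are fine.
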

	 
	 \begin{proof}
	 	(1) Straightforward computation. (2) This is clear from the definition of $\End_\K^{k}(\LL)$ and the fact that $\varphi$ is a $\K$-algebra homomorphism.
	 	(3) It follows from the properties of an integral basis.
	 	(4) The first maps in the columns of the diagram are isomorphisms thanks to Proposition \ref{prop:iso-phi}. The maps $\psi_{\alpha}$ and $\psi_{\overline{\alpha}}$ are also isomorphisms by definition.  Moreover, the rows are exact as a consequence of (1) and (2). The left square is easily shown to be commutative from the direct definitions of $\varphi_0$ and $\varphi_k$.
	 	The right diagram is commutative as a consequence of the commutativity of the left square and the exactness of the rows.
	 \end{proof}
	 
	 \noindent
	 We remark that, for each nonnegative integer $k$ and thanks to the last result, $\m_{\LL}^k[G]$ and $\End_{\K}^k(\LL)$ are ideals of $\mO_{\LL}[G]$ and $\End_{\K}^0(\LL)$, respectively. Moreover, we observe that, as a consequence of Proposition \ref{prop:iso-phi}, the $\mO_\K$-algebra $\mO_{\LL}[G]$ is isomorphic to $\End_{\mO_\K}(\mO_\LL)$ via the following diagram
	 $$
	 \xymatrix{
	 	\End_{\K}(\LL)  & & \ar[ll]_{f\mapsto 1\otimes f} \End_{\mO_{\K}}(\mO_{\LL}) \\
	 	\End_{\K}^0(\LL) \ar[u]^{\subseteq} \ar[rru]_{f\mapsto f_{|\mO_{\LL}}} &&
	 }
	 $$
	 
	 \begin{remark}
	 	The commutative diagram of Theorem \ref{thm:compatibility}(4) justifies the fact that working in $\mO_{\LL}[G]$ or in $\Mat_n(\mO_\K)$ is equivalent, provided that we choose an integral basis $\alpha$ for representing the endomorphisms of $\LL$ as matrices. More specifically, when we want to understand what happens in the  rings $\Res(\K,k)$, we can either consider the ring $\mO_{\LL}[G]$ modulo $\m_{\LL}^k[G]$, or quotient $\Mat_n(\mO_{\K})$ modulo $\Mat_n(\m_{\K}^k)$.
	 \end{remark}

	 \subsection{Rank drops and expansions}\label{sec:rank-drops}
	 
	 The aim of this section is to discuss the drop in rank of $f\in \mO_{\LL}[G]$ to $\overline{f}\in\overline{\LL}[G]$, or more generally to $\Res(\LL,k)[G]$, in terms of cokernel of $f:\mO_{\LL}\rightarrow \mO_{\LL}$ or of a \emph{$\pi$-adic expansion}.
	 
	 Let $\cor{R}\subseteq \mO_{\LL}$ be a \emph{set of representatives} for $\overline{\LL}$, in other words $0\in\cor{R}$ and $\cor{R}$ is mapped bijectively onto $\overline{\LL}$ via the canonical projection $\mO_{\LL}\rightarrow\overline{\LL}$. Among the possible choices of $\cor{R}$, there is a unique one for $\cor{R}\setminus\{0\}$ to be a multiplicative subgroup of $\mO_\LL^\times$ \cite[\S~I.7]{MR1915966}. Such a set of representatives is called \emph{multiplicative}, also known as
	 \emph{Teichm\"uller representatives}, and has the property that $\cor{R}\setminus\{0\}$ is isomorphic to $\overline{\LL}^\times$. We assume, until the end of the present section, that $\cor{R}$ is multiplicative.
	 Thanks to \cite[Cor.~I.5.2]{MR1915966} and the fact that $\mO_{\LL}[G]$ is a free $\mO_{\LL}$-module with basis $G$, we have that 
	 \[
	 \cor{R}[G]=\left\{\sum_{g\in G}r_gg : r_g\in \cor{R}\right\}
	 \]
	 is a set of representatives of $\overline{\LL}[G]$ in $\mO_{\LL}[G]$. In particular,  for every $f\in\mO_{\LL}[G]$, there is a unique \emph{$\pi$-adic expansion} 
	 \begin{equation}\label{eq:exp}
	 	f=\sum_{i\geq 0}f^{(i)}\pi^i
	 \end{equation}
	 of $f$ such that each $f^{(i)}$ is an element of $\cor{R}[G]$. 
  
	 \begin{definition}
	 	Let $R$ be a commutative ring and let $M$ be a finitely generated $R$-module. Then the \emph{inner rank} $\rki(M,R)$ 
	 	of $M$ is the minimum number of generators of $M$ as an $R$-module, in symbols
	 	\[
	 	\rki(M,R)=\min\left\{|X| : X\subseteq M, \ M=\gen{X}_R=\sum_{x\in X}Rx\right\}.
	 	\]
	 	The \emph{inner rank} of $f\in\End_R(M)$ is
	 	$\rki(f,R)=\rki(f(M),R)$.
	 \end{definition}

	 \noindent
	 We observe that, whenever $R$ is a field and $f\in\End_R(M)$, then 
	 $$\rki(f,R)=\rk_R(f)=\dim_R(M)-\dim_R(\ker f).$$ 
	 In the following result, $\rk$ means $\rk_{\K}$. Moreover, if $k$ is a nonnegative integer and $f\in\mO_{\LL}[G]$ is viewed as an element of $\End_{\mO_{\K}}(\mO_{\LL})$, then $\overline{f}_k$ denotes the induced endomorphism of $\Res(\LL,k)$
	 and $f_{|k}$ denotes the truncated sum $\sum_{i=0}^kf^{(i)}\pi^i$, where $f$ is written as in \eqref{eq:exp}.
	 \begin{proposition}\label{prop:cokernel}
	 	Let $k$ be a positive integer and let $f\in\mO_{\LL}[G]$ viewed as an element of $\End_{\mO_{\K}}(\mO_{\LL})$ or of $\End_{\K}(\LL)$. Let, moreover, 
	 	\[
	 	(e_1) \supseteq (e_2) \supseteq \ldots \supseteq (e_n)
	 	\]
	 	denote the elementary divisors of the $\mO_{\K}$-module $\coker(f)=\mO_{\LL}/f(\mO_{\LL})$. Then the following hold: 
	 	\begin{enumerate}[label=$(\arabic*)$]
	 		\item $\rki(\overline{f}_k, \Res(\K,k))=
	 		|\{i : \pi^{k-1}\in(e_i)\}|=n-|\{i : e_i\in(\pi^k)\}|$.
	 		\item $\rki(f, \mO_\K)=\rk f$ and $f(\mO_{\LL})+\m_{\LL}^{k+1}=f_{|k}(\mO_{\LL})+\m_{\LL}^{k+1}$,
	 		\item $\rki(\overline{f}_{k+1},\Res(\K, k+1))\leq\rk f_{|k}$ and both \[\rki(\overline{f}_{k+1},\Res(\K, k+1))<\rk f_{|k} \textup{ or }\rki(\overline{f}_{k+1}, \Res(\K, k+1))=\rk f_{|k}\] can occur.
	 	\end{enumerate}
	 \end{proposition}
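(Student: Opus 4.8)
The plan is to run everything through the Smith normal form of $f$, viewed as an $\mO_{\K}$-linear endomorphism of the free rank-$n$ module $\mO_{\LL}$; here one uses crucially that $\LL/\K$ is unramified, so $\m_{\LL}=\pi\mO_{\LL}$ and $\Res(\LL,j)=\mO_{\LL}/\pi^{j}\mO_{\LL}$ is free of rank $n$ over the local ring $\Res(\K,j)=\mO_{\K}/\pi^{j}\mO_{\K}$ for every $j$. First I would pick $\mO_{\K}$-bases of the source and target copies of $\mO_{\LL}$ for which $f$ is represented by $\diag(\pi^{a_1},\dots,\pi^{a_n})$ with $0\le a_1\le\dots\le a_n\le\infty$ (convention $\pi^{\infty}=0$); then $\coker(f)\cong\bigoplus_{i=1}^{n}\mO_{\K}/\pi^{a_i}\mO_{\K}$, so $(e_i)=(\pi^{a_i})$, and inverting $\pi$ gives $\rk f=\dim_{\K}f(\LL)=|\{i:a_i<\infty\}|$.

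For (1): in these bases $\overline{f}_k$ is $\diag(\overline{\pi^{a_1}},\dots,\overline{\pi^{a_n}})$ acting on $\Res(\K,k)^{n}$, so its image is $\bigoplus_{i}\overline{\pi^{a_i}}\,\Res(\K,k)$, whose $i$-th summand is cyclic and is nonzero exactly when $a_i<k$. As the minimal number of generators of a finite direct sum of modules over a local ring is the sum of those of the summands (Nakayama), $\rki(\overline{f}_k,\Res(\K,k))=|\{i:a_i<k\}|=|\{i:a_i\le k-1\}|$; and from $(e_i)=(\pi^{a_i})$ one has $a_i\le k-1\iff\pi^{k-1}\in(e_i)$ and $a_i\ge k\iff e_i\in(\pi^{k})$, which produces both displayed expressions. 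For the first half of (2): $f(\mO_{\LL})$ is a submodule of the free $\mO_{\K}$-module $\mO_{\LL}$, hence free of rank $\dim_{\K}f(\LL)=\rk f$, and a free module over the local ring $\mO_{\K}$ needs exactly its rank many generators, so $\rki(f,\mO_{\K})=\rk f$ (equivalently, $f(\mO_{\LL})\cong\bigoplus_{a_i<\infty}\mO_{\K}$ from the normal form). For the second half of (2): $f-f_{|k}=\sum_{i>k}f^{(i)}\pi^{i}$ lies in $\pi^{k+1}\mO_{\LL}[G]=\m_{\LL}^{k+1}[G]$, so \cref{prop:corresp-phi} forces $(f-f_{|k})(\mO_{\LL})\subseteq\m_{\LL}^{k+1}$; hence $f(\mO_{\LL})\subseteq f_{|k}(\mO_{\LL})+\m_{\LL}^{k+1}$, and the reverse inclusion is symmetric.

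For (3): the identity just obtained gives $\rki(\overline{f}_{k+1},\Res(\K,k+1))=\rki(\overline{(f_{|k})}_{k+1},\Res(\K,k+1))$, and applying (1) to $f_{|k}$ (with normal-form exponents $a_1',\dots,a_n'$) this equals $|\{i:a_i'\le k\}|\le|\{i:a_i'<\infty\}|=\rk f_{|k}$, the last equality being the first half of (2) for $f_{|k}$; this is the asserted inequality. Equality holds for $f=1$, the identity of $\mO_{\LL}[G]$: then $f_{|k}=f$, $\rk f_{|k}=n$, and $\overline{f}_{k+1}=\mathrm{id}_{\Res(\LL,k+1)}$ has image the whole free rank-$n$ module, so $\rki(\overline{f}_{k+1},\Res(\K,k+1))=n$. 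Strict inequality already occurs for $n=2$, $k=1$: it suffices to exhibit $f\in\mO_{\LL}[G]$ whose $\pi$-adic expansion has degree $\le 1$ (so $f_{|1}=f$) with elementary divisors $(1)\supsetneq(\pi^{2})$, since then $\rk f_{|1}=2$ while (1) gives $\rki(\overline{f}_{2},\Res(\K,2))=1$.

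Producing and verifying that example is the step I expect to be the real obstacle. It amounts to finding a degree-$\le 1$ element of $\mO_{\LL}[G]$ carrying an ``intermediate'' elementary divisor, which I would verify by a direct computation with the $G$-Moore matrix of an integral basis (cf.\ \cref{lem:GMoore}, \cref{prop:equivalence_basis}). What makes this delicate is that the phenomenon cannot be realized at level $k=0$: for $n=2$, every $r_0+r_1\sigma\in\cor{R}[G]$ has its reduction of the same $\overline{\K}$-rank as its $\K$-rank, because by Hilbert's Theorem~90 the presence of a nonzero kernel vector is controlled by whether $\Norm{\LL}{\K}(r_0/r_1)=1$, a condition insensitive to reduction modulo $\pi$ for Teichm\"uller units; so for $n=2$ the strict example must genuinely sit at order $\ge 1$ in $\pi$.
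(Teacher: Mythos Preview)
Your argument for (1), (2), and the inequality in (3) is essentially the paper's own proof, run through the Smith normal form of $f$ over the PID $\mO_{\K}$. The paper likewise fixes an $\mO_{\K}$-basis $(b_1,\dots,b_n)$ of $\mO_{\LL}$ with $f(\mO_{\LL})=\bigoplus_i \mO_{\K}e_ib_i$, reads off the minimal generating sets after reduction, and deduces the inequality in (3) from the identity in (2) exactly as you do.

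The genuine gap is in (3): you stop short of producing the strict-inequality example and explicitly flag this as ``the real obstacle.'' Since the statement asserts that strict inequality \emph{can occur}, an explicit witness is required, not optional. The paper supplies one: take $K=\Q_p$ with $p\equiv 3\bmod 4$, let $\LL=K(\delta)$ with $\delta^2=-1$ (unramified of degree $2$), and set $f=\mathrm{id}+(1+\delta p)\sigma=\mathrm{id}+\sigma+p(\delta\sigma)$, so $f_{|1}=f$. In the integral basis $(1,\delta)$ one computes $f(1)=2+p\delta$ and $f(\delta)=p$, whence $f$ is represented by
\[
\begin{pmatrix}2&p\\ p&0\end{pmatrix},
\]
which has determinant $-p^2$, so $\rk f_{|1}=2$ and the elementary divisors are $(1)\supsetneq(p^2)$. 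Modulo $p^2$ the second column equals $2^{-1}p$ times the first, giving $\rki(\overline{f}_2,\Res(\K,2))=1<2$. This is exactly the shape you predicted (a $\pi$-adic expansion of degree $\le 1$ with an intermediate elementary divisor $(\pi^2)$), and your heuristic about why the order-$1$ term in $\pi$ must be present is on point; you simply needed to write the example down and check it.
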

	 
	 \begin{proof}
	 	(1) Thanks to Proposition \ref{prop:corresp-phi}(2) the ring $\mO_{\LL}$ is a free module over the principal ideal domain $\mO_{\K}$. Thanks to the assumptions there exists thus a basis $(b_1,\ldots,b_n)$ of $\mO_{\LL}$ over $\mO_{\K}$ such that 
	 	\[
	 	f(\mO_{\LL})=\mO_\K e_1b_1\oplus\ldots\oplus\mO_\K e_nb_n.
	 	\]
	 	Modulo $\m_\K^k$, it follows therefore that a minimal generating set for $\overline{f}_k(\Res(\LL,k))$ as a $\Res(\K,k)$-module is given by $\{
	 	e_ib_i : e_i\notin \m_{\K}^k
	 	\}$, equivalently
	 	\[
	 	\rki(\overline{f}_k, \Res(\K,k))=|\{i : e_i\notin \m_\K^k\}|=n-|\{i : e_i\in\m_\K^k\}|=
	 	|\{i : \pi^{k-1}\in(e_i)\}|
	 	\]
	 	(2) It clearly follows from the definition of $f_{|k}$ and the fact that $f$ belongs to $\mO_{\LL}[G]$ that $f(\mO_{\LL})+\m_{\LL}^{k+1}=f_{|k}(\mO_{\LL})+\m_{\LL}^{k+1}$. We now show the equality $\rki(f,\mO_\K)=\rk f$ holds. Let $b_1,\ldots, b_n$ be as in the proof of (1). Then, by extension of scalars, it follows that 
	 	\[
	 	f(\LL)=\K e_1b_1\oplus\ldots\oplus\K e_nb_n.
	 	\]
	 	In particular, we have that
	 	\[
	 	\rki(f,\mO_\K)=|\{
	 	i : e_i\neq 0\}|=\rk f.
	 	\]
	 	(3) It follows from (2) that \[\rki( \overline{f}_{k+1},\Res(\K,k+1)) = \rk(\overline{(f_{|k})}_{k+1}, \Res(\K,k+1)) \leq \rk f_{|k}.\]
	 	We now show that both equality and strict inequality can occur. For the equality, we can for instance take $f$ to be the identity map: then $f=f_{|k}$ and $\overline{f}_{k+1}$ is the identity on $\Res(\LL,k+1)$. We now give an example for the strict inequality. Assume $\pi=p$ and $p\equiv 3 \bmod 4$. Assume moreover, that $n=2$ and that $\LL$ is the splitting field of $F(X)=X^2+1$ over $\K$. Call $\pm \delta$ the roots of $F$ in $\LL$ and let $\sigma$ be the generator of the Galois group $G$, which swaps $\delta$ and $-\delta$.
	 	Write $\sigma^2=1_{\LL}$. We define $f\in\mO_{\LL}$ to be 
	 	\[
	 	f=1_{\LL}+(1+\delta p)\sigma = 1_{\LL}+\sigma +p(\delta\sigma)=f_{|1}.
	 	\]
	 	With respect to the basis $(1,\delta)$ then the endomorphism $f$ can be expressed as 
	 	\[
	 	f=\begin{pmatrix}
	 		2 & p \\ p & 0
	 	\end{pmatrix}, \textup{ yielding } \rk f =2.
	 	\]
	 	However, modulo $p^2$, the vectors $(2,p)$ and $(p,0)$ are linearly dependent and so \[1=\rki (\overline{f}_2,\Res(\K,2))<\rk f_1=2.\]
	 \end{proof}
	 
	 \begin{remark}\label{rem:ranks}
	 	Let $k$ be a positive integer. Notice that, thanks to Proposition \ref{prop:cokernel}(1) we have that 
	 	\[
	 	|\{i : \pi^{k-1}\in(e_i)\}|=\rki(\overline{f}_k,\Res(\K,k))\leq \rki (\overline{f}_{k+1},\Res(\K,k+1))
	 	\]
	 	and therefore, as a consequence of Proposition \ref{prop:cokernel}(3), we have that 
	 	\[
	 	\rki(\overline{f}_k,\Res(\K,k))\leq \min\{ \rk f_{|\ell} : \ell \geq k-1\}. 
	 	\] 
	 	In particular, 
	 	there exists a nonnegative integer $\ell$ such that, for all $j\geq \ell$, one has that
	 	\[
	 	\rk f=\rki(\overline{f}_{\ell},\Res(\K,\ell))\leq \rk f_{|\ell-1}.
	 	\]
	 	In other words, the ranks of the truncated sums provide upper bounds for the ranks of $f$ over the residue fields and, in the limit, to the rank of $f$. Whether the ranks of $f_{|k}, f_{|k+1},f$ can be related via 
	 	\[
	 	\rk f_{|k} \leq  \rk f_{|k+1}\leq \rk f
	 	\]
	 	is not a priori clear. It seems reasonable to expect that, in the context of the last inequalities, $k=0$ might play a special role.
	 \end{remark}

	 \subsection{Module endomorphisms}\label{sec:rank_polynomials}
	 The following assumptions will be valid until the end of the present section. 
	 Let $S/R$  be a $G$-Galois extension of local principal ideal rings.
	 In other words, there exists a $G$-Galois extension of rings $\tilde{S}/\tilde{R}$
	 such that the following properties hold: 
	 \begin{enumerate}[label=$(\arabic*)$]
	 	\item the rings $\tilde{R}$ and $\tilde{S}$ are local principal ideal domains, equivalently DVRs,
	 	\item the rings $R$ and $S$ are quotient rings of $\tilde{R}$ and $\tilde{S}$, respectively, that is, there exist ideals $I_R$ of $\tilde{R}$ and $I_S$ of $\tilde{S}$ such that $R=\tilde{R}/I_R$ and $S=\tilde{S}/I_S$ \footnote{Equivalently, either $I_R=I_S=0$ or there exists $k$ such that $R=\tilde{R}/\tilde{R}\omega^k$ and $S=\tilde{S}/\tilde{S}\omega^k$. Note that the case when the ideals are non-trivial covers the case of all finite commutative chain rings; cf.\ \cite{hou2004enumeration}},
	 	\item the following natural diagram is commutative. 
	 	\[
	 	\xymatrix{
	 		\tilde{R} \ar[d] \ar[r] & \tilde{S} \ar[d] \\
	 		R \ar[r] & S
	 	}
	 	\]
	 	
	 \end{enumerate}
	 In particular, if the maximal ideal $\m_{\tilde{R}}$ of $\tilde{R}$ is equal to $\tilde{R}\omega$, then, for each nonnegative integer $k$, we have that $\m_{\tilde{R}}^k=\tilde{R}\omega^k$.  
	 Moreover, thanks to Proposition \ref{prop:iso-phi}(2), the modules $S$ and $\tilde{S}$ are free over $R$ and $\tilde{R}$, respectively.
	 For each nonnegative integer $i$, we denote by $\Res(R,i)$ the quotient ring of $R$ by the $i$-th power of its maximal ideal $\m_R$ and we set $\overline{R}=\Res(R,1)$. 
	 
	 We assume, additionally, that the ring extension $S/R$ is \emph{unramified}, i.e.\ the ring $\tilde{S}$ is local with maximal ideal $\m_{\tilde{S}}=\tilde{S}\omega$. Observe that for $S/R$ to be unramified is equivalent to saying that the extension of fields $\mathrm{Frac}(\tilde{S})/\mathrm{Frac}(\tilde{R})$ is unramified. Write $\overline{S}$ for the residue field of $S$ and write, additionally, $\m_S$ and $\Res(S,i)$ for the analogues of $\m_R$ and $\Res(R,i)$ for $S$, respectively.
	 Then the 
	 extension of the residue fields $\overline{S}/ \overline{R}$ is $G$-Galois. We use the bar notation for elements and $\overline{R}$-spaces in $\overline{S}$.
	 To conclude, for an element $f\in S[G]$, we write 
	 \[
	 \val(f)=\max\{k : f\in \m_S^k[G]\}\in \Z_{\geq 0}\cup\{+\infty\}.
	 \]
	 
	 \begin{definition}
	 	Let $M$ be a finitely generated $R$-module $M$. Then the \emph{free rank} $\frk(M,R)$ of $M$ is the largest among the ranks of free $R$-submodules of $M$, in symbols
	 	\[
	 	\frk(M,R)=\max\{\rki(N,R) : N \textup{ is a free $R$-submodule of }M\}.
	 	\]
	 \end{definition}
	 
	 \begin{lemma}\label{lem:rank_nullity}\label{lem:free_ker}
	 	Let $f \in S[G]$. Then the following hold:
	 	\begin{enumerate}[label=$(\arabic*)$]
	 		\item one has $\rki(f,R) +\frk(\ker(f),R)=n.$ 
	 		\item there exists a free $R$-submodule $M$ of $\ker (f)$ such that 
	 		$$\frk(M,R)=\dim_{\overline{R}}(\overline{M})=\frk(\ker(f),R).$$
	 	\end{enumerate}
	 \end{lemma}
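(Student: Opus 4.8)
The plan is to bring $f$ into a diagonal (Smith) normal form over $R$, after which both $f(S)$ and $\ker(f)$ become completely explicit. Using the set-up, I identify $R$ with $\tilde{R}/\m_{\tilde{R}}^{s}$ for a suitable $s\in\Z_{\geq 1}\cup\{\infty\}$ (where $s=\infty$ means $R=\tilde{R}$) and $S$ with $\tilde{S}/\m_{\tilde{R}}^{s}\tilde{S}$. Since the coefficients of $f=\sum_{g\in G}a_{g}g$ lie in the quotient ring $S$ of $\tilde{S}$, I lift them to obtain $\tilde{f}\in\tilde{S}[G]$ reducing to $f$; viewing $\tilde{f}$ as an $\tilde{R}$-endomorphism of the free module $\tilde{S}$ and applying the Smith normal form over the discrete valuation ring $\tilde{R}$, I get $\tilde{R}$-bases $(\tilde{u}_{1},\dots,\tilde{u}_{n})$ and $(\tilde{v}_{1},\dots,\tilde{v}_{n})$ of $\tilde{S}$ and elements $c_{1}\mid\cdots\mid c_{n}$ of $\tilde{R}$ with $\tilde{f}(\tilde{u}_{i})=c_{i}\tilde{v}_{i}$. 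Reducing everything modulo $\m_{\tilde{R}}^{s}$ gives $R$-bases $(u_{i})$ and $(v_{i})$ of $S$ and elements $c_{i}\in R$ (each, in the chain ring $R$, a power of a uniformizer or $0$) with $f(u_{i})=c_{i}v_{i}$.

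From this form one reads off $f(S)=\bigoplus_{i}R\,c_{i}v_{i}$ and $\ker(f)=\bigoplus_{i}\Ann_{R}(c_{i})\,u_{i}$. For the first: the set $\{c_{i}v_{i}:c_{i}\neq 0\}$ generates $f(S)$, and reducing modulo the maximal ideal $\m_{R}$ (each nonzero cyclic summand $R\,c_{i}v_{i}$ contributing one dimension) shows it is a minimal generating set, so $\rki(f,R)=\#\{i:c_{i}\neq 0\}$. For the second: one has $\Ann_{R}(c_{i})=R$ exactly when $c_{i}=0$, and $\Ann_{R}(c_{i})$ is a proper ideal otherwise; hence $M:=\bigoplus_{c_{i}=0}R\,u_{i}$ is a free submodule of $\ker(f)$ of rank $t:=\#\{i:c_{i}=0\}$, giving $\frk(\ker(f),R)\geq t$ and thus $\rki(f,R)+\frk(\ker(f),R)\geq n$.

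The step I expect to be the main obstacle is the reverse inequality $\frk(\ker(f),R)\leq t$, i.e. showing that no larger free submodule is hidden among the torsion summands $\Ann_{R}(c_{i})$ with $c_{i}\neq 0$. If $R$ is a domain these summands are torsion, so a free submodule misses them and the bound is immediate. In general I plan to multiply by $\m_{R}^{s-1}$: for a free submodule $N\subseteq\ker(f)$ of rank $\rho$ one has $\m_{R}^{s-1}N\cong(\m_{R}^{s-1})^{\rho}$, which is $\rho$-dimensional over $\overline{R}$, while $\m_{R}^{s-1}\ker(f)=\bigoplus_{i}\m_{R}^{s-1}\Ann_{R}(c_{i})$ has a nonzero $i$-th summand only when $\Ann_{R}(c_{i})=R$ (that is, $c_{i}=0$), in which case the summand $\m_{R}^{s-1}$ is one-dimensional over $\overline{R}$; comparing $\overline{R}$-dimensions forces $\rho\leq t$. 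Together with the previous paragraph this yields $\rki(f,R)+\frk(\ker(f),R)=n$, which is $(1)$.

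Finally, for $(2)$ the submodule $M=\bigoplus_{c_{i}=0}R\,u_{i}$ already does the job: it is free of rank $t=\frk(\ker(f),R)$, and since $(u_{1},\dots,u_{n})$ is an $R$-basis of $S$ its reduction is an $\overline{R}$-basis of $\overline{S}$ (here using that $S/R$ is unramified, so $\overline{S}=S\otimes_{R}\overline{R}$), whence $\overline{M}$ is spanned by the $t$ independent vectors $\overline{u}_{i}$ with $c_{i}=0$ and $\dim_{\overline{R}}\overline{M}=t$.
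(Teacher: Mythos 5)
Your proof is correct, and it reaches the statement by a route that is close in spirit to the paper's but different in execution. The paper also lifts $f$ to $\tilde f\in\tilde S[G]$ and invokes module theory over the PID $\tilde R$, but it does so by splitting $\tilde S=N\oplus\ker\tilde f$ with $N\cong\tilde f(\tilde S)$ free and then simply ``passing to the quotient''; in particular its free submodule in $(2)$ is the image in $S$ of $\ker\tilde f$. You instead diagonalize the lift via the Smith normal form, reduce the elementary divisors modulo $\m_{\tilde R}^{s}$, and compute $f(S)=\bigoplus_i c_iRv_i$ and $\ker f=\bigoplus_i\Ann_R(c_i)u_i$ explicitly, getting $\rki(f,R)=\#\{i: c_i\neq0\ \text{in }R\}$ by Nakayama and $\frk(\ker f,R)=\#\{i: c_i=0\ \text{in }R\}$, with the upper bound on the free rank supplied by the socle-type argument of multiplying by $\m_R^{s-1}$. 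What your version buys is precisely the step the paper compresses: the relevant count is governed by the images of the elementary divisors in $R$, not by the rank of $\tilde f(\tilde S)$ or of $\ker\tilde f$ over $\tilde R$ (these depend on the chosen lift, e.g.\ $\tilde f=\omega^{s}\,\mathrm{id}$ lifts $f=0$), so your argument is lift-independent and proves both inequalities in $(1)$, including $\frk(\ker f,R)\le\#\{i:c_i=0\}$, which the paper's one-line reduction leaves implicit; your choice of $M$ in $(2)$ is likewise robust. What the paper's approach buys is brevity: the splitting-off-the-kernel argument avoids any normal form and, with a suitable (e.g.\ ``saturated'') choice of lift, yields the same conclusion in a few lines. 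Two cosmetic points: in the domain case the summands $\Ann_R(c_i)u_i$ with $c_i\neq0$ are zero rather than ``torsion'', and the identification $R\cong\tilde R/\m_{\tilde R}^{s}$, $S\cong\tilde S/\m_{\tilde R}^{s}\tilde S$ that you use is indeed forced by the paper's standing assumptions (commutativity of the square together with $S^G=R$), so stating that inference explicitly would make the argument self-contained.
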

	 
	 \begin{proof}
 (1) In view of \cref{prop:iso-phi}, we consider $f$ as an element of $\End_R(S)$ and so, thanks to the isomorphism theorems, we have an induced $R$-module isomorphism $S/\ker(f)\rightarrow f(S)$. This means in particular that $\rki(S/\ker(f),R)=\rki(f(S),R)$. It follows now from the elementary divisor theorem that $\rki(S/\ker(f),R)=n-\frk(\ker(f),R)$ and so the statement is proven. 

Alternatively, one can observe that $f$, as an element of $\End_R(S)$, can be represented as a square matrix over $R$ in Smith normal form, whose diagonal is $(e_1,\ldots,e_n)$, after choosing any $R$-basis of $S$. This is ensured by the fact that Smith normal forms exist over any principal ideal ring; see e.g. \cite[Theorem 12.3 and  subsequent remark]{kaplansky1949elementary}. The claim follows from the fact that $\rki(f,R)=|\{i : e_i\neq 0 \}|$, which can be deduced in an analogous way to what shown in Proposition \ref{prop:cokernel}(1) for $R=\mO_\K$ and $S=\mO_\LL$.

   (2) Thanks to the definition of free rank of $\ker(f)$, there exists a free $R$-sumbodule $M$ of $\ker(f)$ such that $\rki(M,R)=\frk(\ker(f),R)$. The submodule $M$ being free over $R$, one also has $\rki(M,R)=\dim_{\overline{R}}(\overline{M})$ and so the proof is complete.
	 \end{proof}

	 \begin{proposition}\label{prop:innerrank_lowerbound}
	 	Let $f \in S[G]$ be a nonzero $\sigma$-polynomial. Then, one has
	 	$$\rki(f,R) \geq n-\deg_\sigma(f).$$
	 \end{proposition}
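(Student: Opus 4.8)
The plan is to reduce to the classical lower bound for the rank of a nonzero $\sigma$-polynomial over a (cyclic Galois extension of) fields --- recalled in the introduction to this section, see e.g.\ \cite{ACLN20+} --- by lifting $f$ to the ambient discrete valuation ring, cancelling the uniformizer there (where this is harmless, the ring being a domain), and then reading off both reductions, modulo $\omega^\ell$ and modulo $\omega$, from a single Smith normal form.

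Concretely, fix a $G$-Galois extension of DVRs $\tilde S/\tilde R$ with $R=\tilde R/\omega^\ell\tilde R$ and $S=\tilde S/\omega^\ell\tilde S$ as in the standing hypotheses (here $\ell\in\Z_{\ge 1}\cup\{\infty\}$, the value $\ell=\infty$ being the DVR case, which the argument below also covers). Write $f=\sum_{i=0}^k a_i\sigma^i$ with $k=\deg_\sigma(f)$, so $a_k\neq 0$ and $a_i=0$ for $i>k$. Choose lifts $\tilde a_i\in\tilde S$ of the $a_i$ for $i\le k$, put $\tilde f=\sum_{i=0}^k\tilde a_i\sigma^i\in\tilde S[G]$, and note that $\tilde f$ maps to $f$ under $\tilde S[G]\to S[G]$ and that $\deg_\sigma(\tilde f)=k$ since $\tilde a_k\neq 0$. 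Because $f\neq 0$, the quantity $w=\val(\tilde f)=\min_i\val(\tilde a_i)$ satisfies $w<\ell$; and since $\tilde S$ is a domain (with $\omega$ a uniformizer, by unramifiedness) we may write $\tilde f=\omega^{w}\tilde g$ with $\tilde g\in\tilde S[G]$, $\val(\tilde g)=0$ and $\deg_\sigma(\tilde g)=k$.

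Now $\tilde g(\tilde S)$ is a submodule of the free $\tilde R$-module $\tilde S$, so by the structure theorem over the PID $\tilde R$ there are an $\tilde R$-basis $b_1,\dots,b_n$ of $\tilde S$ and exponents $v_1,\dots,v_n\in\Z_{\ge 0}\cup\{\infty\}$ (with the convention $\omega^{\infty}=0$) such that $\tilde g(\tilde S)=\bigoplus_{i=1}^n\omega^{v_i}\tilde R b_i$, and consequently $\tilde f(\tilde S)=\omega^{w}\tilde g(\tilde S)=\bigoplus_{i=1}^n\omega^{w+v_i}\tilde R b_i$. Since $f$ is induced by $\tilde f$, the $R$-module $f(S)$ is the image of $\tilde f(\tilde S)$ in $S=\tilde S/\omega^\ell\tilde S$, so $f(S)\cong\bigoplus_{i=1}^n\tilde R/\omega^{c_i}$ with $c_i=\max\{\ell-w-v_i,\,0\}$; by Nakayama's lemma this yields
\[
\rki(f,R)=\rki(f(S),R)=\bigl|\{i:w+v_i<\ell\}\bigr|\ \ge\ \bigl|\{i:v_i=0\}\bigr|,
\]
the last inequality because $w<\ell$. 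On the other hand, reducing the decomposition of $\tilde g(\tilde S)$ modulo $\omega$ shows that $\bar g(\overline S)$, the image of $\tilde g(\tilde S)$ in $\overline S=\tilde S/\omega\tilde S$, is isomorphic to $\overline R^{\,|\{i:v_i=0\}|}$, where $\bar g\in\overline S[G]$ is the reduction of $\tilde g$ modulo $\omega\tilde S$. Hence $|\{i:v_i=0\}|=\dim_{\overline R}\bar g(\overline S)=\rki(\bar g,\overline R)$.

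It remains to bound $\rki(\bar g,\overline R)$ from below. Since $\val(\tilde g)=0$, the reduction $\bar g$ is a nonzero $\sigma$-polynomial in $\overline S[G]$, with $\deg_\sigma(\bar g)\le\deg_\sigma(\tilde g)=k$; as $\overline S/\overline R$ is a $G$-Galois extension of fields, the classical bound applies to $\bar g$ and gives $\rki(\bar g,\overline R)=\rk_{\overline R}(\bar g)\ge n-\deg_\sigma(\bar g)\ge n-k$. Combining the two displays yields $\rki(f,R)\ge n-k=n-\deg_\sigma(f)$, which is the claim. (Equivalently, in view of Lemma~\ref{lem:rank_nullity}(1) the statement reads $\frk(\ker f,R)\le\deg_\sigma(f)$, which the same computation establishes.) The one genuine difficulty relative to the field case is the presence of $\val(f)>0$: then $f$ vanishes modulo $\m_S$ and one cannot divide out the uniformizer inside $S$, since multiplication by $\omega^{w}$ on $S$ is far from injective and can collapse the inner rank. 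Lifting to the domain $\tilde S$, where $\omega^{w}$ is injective and hence only shifts elementary divisors, is precisely what repairs this; the remaining elementary-divisor bookkeeping is then routine.
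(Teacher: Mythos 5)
Your proof is correct, but it takes a genuinely different route from the paper's. The paper argues by induction on $\val(f)$: when $\val(f)=0$ it chooses, via Lemma~\ref{lem:free_ker}(2), a free $R$-submodule $M\subseteq\ker(f)$ with $\dim_{\overline{R}}(\overline{M})=\frk(\ker(f),R)$, applies the field-case bound to $\overline{f}$, and concludes with the rank--nullity identity of Lemma~\ref{lem:rank_nullity}(1); when $\val(f)>0$ it writes $f=\omega h$ and compares $f$ with $h$ inside $S[G]$. You avoid the induction altogether: you lift $f$ itself to the domain $\tilde S$, factor out $\omega^{\val(\tilde f)}$ there, and read off both $\rki(f,R)$ and the rank of the residual polynomial $\bar g$ from a single elementary-divisor decomposition of $\tilde g(\tilde S)$, so the statement reduces to one application of the field case on the image side (much in the spirit of Proposition~\ref{prop:cokernel}). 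As for what each buys: the paper's argument is shorter given its kernel-side lemmas and fits the later use of annihilator polynomials, while yours is uniform in the valuation, yields the explicit elementary-divisor description of $f(S)$ as a by-product, and sidesteps the delicate point in the paper's inductive step. Indeed, the chain $\deg_\sigma(h)\ge n-\rki(h,R)\ge n-\rki(f,R)$ used there requires $\rki(h,R)\le\rki(f,R)$, whereas for a general $h$ with $f=\omega h$ one has $f(S)=\omega h(S)\subseteq h(S)$ and hence only the reverse inequality is automatic; for instance, over $R=\Z/4\Z$ with $S$ the degree-two Galois ring, taking $h$ with matrix $\diag(1,2)$ and $f=2h$ gives $\rki(h,R)=2>1=\rki(f,R)$, so that step implicitly needs a careful choice of $h$. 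Your lift-and-divide in $\tilde S$, where multiplication by $\omega$ is injective and merely shifts elementary divisors, is exactly the clean way around this, as you observe at the end of your argument.
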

	 
	 \begin{proof}
	 	We work by induction on $v=\val(f)$. We start by assuming that $v=0$ and we show
	 	that $\frk(\ker(f),R)\leq \deg_\sigma(f)$. By Lemma \ref{lem:free_ker}(2), there exists a free $R$-submodule  $M\subseteq \ker(f)$ such that $\dim_{\overline{R}}(\overline{M})=\frk(\ker(f),R)$. Clearly, $\overline{M} \subseteq \ker(\overline{f}))$ and so it follows from the field case that
	 	$$ \frk(\ker(f),R)=\dim_{\overline{R}}(\overline{M})\leq \dim_{\overline{R}}(\ker(\overline{f}))\leq \deg_{\sigma}(\overline{f}) \leq \deg_{\sigma}(f).$$
	 	Lemma \ref{lem:rank_nullity} yields
	 	$\rki(f,R)=n-\frk(\ker(f),R)\geq n-\deg_\sigma(f)$.
	 	
	 	Assume now that $v=\val(f)>0$ and that the claim holds for any $\sigma$-polynomial of lower valuation. Let $h\in S[G]$ be such that 
	 	$f=\omega h$ and observe that such $h$ exists, because $v>0$. Now, $f$ and $h$ have the same degree and the following inequalities hold, by definition of inner rank and the induction hypothesis: 
	 	\[
	 	\deg_\sigma(f)=\deg_{\sigma}(h)\geq n-\rki(h,R)\geq n-\rki(f,R).
	 	\]
	 	In particular, we have that $\rki(f,R)\geq n-\deg_\sigma(f)$.
	 \end{proof}
	 
	 \noindent
	 The analogues of the following two results in the context of finite fields can be derived from results included in \cite{MR1503119}.
	 
	 \begin{proposition}\label{prop:annihilator_recursive}
	 	Let $M$  be a free $R$-submodule of $S$ of rank $r\leq n$ such that $\dim_{\overline{R}}(\overline{M})=r$ and let $(\beta_1,\ldots,\beta_r)$ be an $R$-basis of $M$.  Then, there exists a unique monic $f_M \in S[G]$ of $\sigma$-degree $r$ such that $f_M(M) = \{ 0 \}$. Moreover, the $\sigma$-polynomial $f_M$ is defined by induction as:
	 	\begin{equation}\label{eq:annihilator_poly}
	 		f_M= \begin{cases} \mathrm{id} & \mbox{ if } r=0,\\
	 			\left(\sigma- \frac{\sigma(f_{N}(\beta_r))}{f_{N}(\beta_r)}\mathrm{id}\right)
	 			\circ f_{N} & \mbox{ if } r\geq1,
	 		\end{cases}
	 	\end{equation}
	 	where $N=\langle \beta_1,\ldots, \beta_{r-1}\rangle_R$.
	 \end{proposition}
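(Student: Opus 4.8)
The plan is to prove existence and uniqueness separately, in both cases exploiting the lower bound on inner rank from \cref{prop:innerrank_lowerbound} together with the rank--nullity identity of \cref{lem:rank_nullity}. Note first that $M$, being a free $R$-submodule of the free rank-$n$ module $S$, has rank $r\le n$, and that under the isomorphism $\varphi$ of \cref{prop:iso-phi} the composition of $R$-linear endomorphisms of $S$ corresponds to the product in $S[G]$; in particular all the expressions appearing in \eqref{eq:annihilator_poly} genuinely live in $S[G]$, once we know that the occurring denominators are units.

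For existence I would induct on $r$. The case $r=0$ is trivial: $M=\{0\}$ and $f_M=\mathrm{id}$ is monic of $\sigma$-degree $0$. Assume $r\ge 1$, set $N=\langle\beta_1,\dots,\beta_{r-1}\rangle_R$, and let $f_N\in S[G]$ be the monic $\sigma$-polynomial of $\sigma$-degree $r-1$ with $f_N(N)=\{0\}$ provided by the inductive hypothesis. The key step is to check that $f_N(\beta_r)$ is a \emph{unit} of $S$; granting this, $c:=\sigma(f_N(\beta_r))/f_N(\beta_r)\in S$ and $f_M:=(\sigma-c\,\mathrm{id})\circ f_N\in S[G]$ is well defined. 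To prove the claim, reduce modulo $\m_S$: since $f_N$ is monic, $\overline{f_N}\in\overline S[G]$ is a nonzero $\sigma$-polynomial of $\sigma$-degree exactly $r-1$, whence $\dim_{\overline R}\ker\overline{f_N}\le r-1$ by the field case of \cref{prop:innerrank_lowerbound} (together with \cref{lem:rank_nullity} over the field $\overline R$, where free rank equals dimension). On the other hand $\overline{\beta_1},\dots,\overline{\beta_{r-1}}$ lie in $\ker\overline{f_N}$ and are $\overline R$-linearly independent, because $\dim_{\overline R}\overline M=r$ forces $\overline{\beta_1},\dots,\overline{\beta_r}$ to be a basis of $\overline M$. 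Hence $\ker\overline{f_N}=\langle\overline{\beta_1},\dots,\overline{\beta_{r-1}}\rangle_{\overline R}$ and $\overline{\beta_r}\notin\ker\overline{f_N}$, i.e.\ $f_N(\beta_r)\notin\m_S$; since $S$ is local, $f_N(\beta_r)\in S^\times$. A one-line computation then gives $f_M(\beta_r)=\sigma(f_N(\beta_r))-c\,f_N(\beta_r)=0$, while $f_M(\beta_i)=(\sigma-c\,\mathrm{id})(0)=0$ for $i\le r-1$, so $f_M(M)=\{0\}$ by $R$-linearity. Finally, inspecting the product $(\sigma-c\,\mathrm{id})\cdot f_N$ in $S[G]$, the coefficient of $\sigma^r$ comes only from $\sigma\cdot\sigma^{r-1}$ and equals $1$, so $f_M$ is monic of $\sigma$-degree $r$, completing the induction.

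For uniqueness, let $g\in S[G]$ be monic of $\sigma$-degree $r$ with $g(M)=\{0\}$ and put $h=f_M-g$. Then $h$ has $\sigma$-degree at most $r-1$ and $M\subseteq\ker h$. If $h\ne 0$, then \cref{prop:innerrank_lowerbound} gives $\rki(h,R)\ge n-(r-1)$, so $\frk(\ker h,R)=n-\rki(h,R)\le r-1$ by \cref{lem:rank_nullity}; but $M$ is a free $R$-submodule of $\ker h$ of rank $r$, so $\frk(\ker h,R)\ge r$, a contradiction. Hence $h=0$ and $g=f_M$.

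I expect the main obstacle to be the verification that $f_N(\beta_r)$ is a unit of $S$: this is precisely where the hypothesis $\dim_{\overline R}\overline M=r$, rather than merely ``$M$ free of rank $r$'', enters, and it forces the passage to the residue field and an appeal to the field-theoretic version of the inner-rank bound. The remaining steps — the degree bookkeeping for monicity in the recursion and the rank--nullity contradiction in the uniqueness argument — are routine.
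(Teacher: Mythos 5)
Your proof is correct and follows essentially the same route as the paper: well-definedness of the recursion is checked by reducing modulo $\m_S$ and invoking the field-case kernel bound (the paper phrases this as a contradiction with $\overline{f}_N$ of $\sigma$-degree $r-1$ vanishing on the $r$-dimensional $\overline{M}$, you identify $\ker\overline{f}_N$ directly), and uniqueness follows by applying \cref{prop:innerrank_lowerbound} together with \cref{lem:rank_nullity} to the difference of two monic annihilators. You merely spell out in more detail the steps the paper leaves implicit (unit-ness of $f_N(\beta_r)$ in the local ring $S$, the degree bookkeeping, and the rank--nullity contradiction).
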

	 
	 \begin{proof}
	 	Let $f$ be defined as in \eqref{eq:annihilator_poly}. First, we show that $f$ is well-defined, that is, at each step we can  divide by $f_N(\beta_r)$. Suppose, for a contradiction, that this is not the case. This means that $f_N(\beta_r)\in \m_S$ and, thus, that $\overline{f}_N(\overline{\beta}_r)=0$. It follows that $\overline{f}_N$ has $\sigma$-degree $r-1$ and it vanishes on $\overline{M}$, which has $\overline{R}$-dimension $r$ by assumption. This yields a contradiction, and shows that $f_M$ defined in \eqref{eq:annihilator_poly} is well-defined. Moreover, by definition it is easy to see that $f(\beta_i)=0$ for every $i\in\{1,\ldots,r\}$. 
	 	
	 	Now, suppose that there exist two monic $\sigma$-polynomials $f_1,f_2$ of $\sigma$-degree $r$ such that $f_1(M)=f_2(M)=0$. Then $f_1-f_2$ is either $0$ or has $\sigma$-degree at most $r-1$ and $(f_1-f_2)(M)=\{0\}$. However, this second case is not possible because it contradicts Proposition \ref{prop:innerrank_lowerbound}. Thus, $f_1=f_2$, which shows the uniqueness of $f_M$. 
	 \end{proof}
	 
	 \noindent
	 The $\sigma$-polynomial $f_M$ defined in \eqref{eq:annihilator_poly} is called the \emph{annihilator polynomial} of the free $R$-submodule $M$.
	 
	 \begin{corollary}\label{cor:monic_generator}
	 	Let $M$  be a free $R$-submodule of $S$ of rank $r$ such that $\dim_{\overline{R}}(\overline{M})=r$ and let $(\beta_1,\ldots,\beta_r)$ be an $R$-basis of $M$. Let $f\in S[G]$ be a $\sigma$-polynomial such that $f(M)=0$. Then,
	 	there exists $h\in S[G]$, such that $f=h\circ f_M$.
	 \end{corollary}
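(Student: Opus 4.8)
The plan is to reduce the statement to a right division with remainder in a skew polynomial ring and then to force the remainder to vanish. Recall that the isomorphism $\varphi$ of \cref{prop:iso-phi} carries composition of the endomorphisms attached to $\sigma$-polynomials to multiplication in the skew algebra $S[G]$, and that $S[G]$ is the quotient of the skew polynomial ring $S[x;\sigma]$ by the two-sided ideal $(x^{n}-1)$, the $\sigma$-polynomials of $\sigma$-degree $<n$ forming a set of canonical representatives. Thus the asserted identity $f=h\circ f_M$ amounts to finding $h\in S[G]$ with $f=h\cdot f_M$ in $S[G]$.

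If $\deg_{\sigma}(f)<r$, take $h=0$ and $s=f$. Otherwise, since $f_M$ is monic of $\sigma$-degree $r$, the division algorithm on the right — valid over any coefficient ring precisely because the divisor is monic — produces $h,s\in S[x;\sigma]$ with $f=h\cdot f_M+s$ and $s=0$ or $\deg_{\sigma}(s)<r$, and with $\deg_{\sigma}(h)\le\deg_{\sigma}(f)-r\le n-1-r$. In either case $\deg_{\sigma}(h\cdot f_M)\le n-1$ and $\deg_{\sigma}(s)\le n-1$, so passing to $S[G]$ involves no reduction modulo $x^{n}-1$ and the identity $f=h\circ f_M+s$ holds in $S[G]$ with $h$ and $s$ honest $\sigma$-polynomials.

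It remains to prove $s=0$. By \cref{prop:annihilator_recursive} one has $f_M(M)=\{0\}$, hence $(h\circ f_M)(M)=\{0\}$; together with $f(M)=\{0\}$ this gives $s(M)=\{0\}$. Suppose $s\ne 0$. Since $\deg_{\sigma}(s)\le r-1$, \cref{prop:innerrank_lowerbound} yields $\rki(s,R)\ge n-(r-1)=n-r+1$. On the other hand $M$ is free of rank $r$ (as $\dim_{\overline{R}}(\overline{M})=r$) and $M\subseteq\ker(s)$, so $\frk(\ker s,R)\ge r$, and \cref{lem:rank_nullity}(1) forces $\rki(s,R)=n-\frk(\ker s,R)\le n-r$, a contradiction. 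Therefore $s=0$ and $f=h\circ f_M$. The only point needing care is the degree bookkeeping that lets the division in $S[x;\sigma]$ descend to an equality in $S[G]$; granted that, the vanishing of the remainder is the very same rank argument used for uniqueness in the proof of \cref{prop:annihilator_recursive}.
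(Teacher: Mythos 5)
Your proof is correct and follows essentially the same route as the paper: right division of $f$ by the monic $f_M$ (the paper performs it directly in $S[G]$, citing the skew-polynomial literature), followed by killing the remainder via \cref{prop:innerrank_lowerbound} together with \cref{lem:rank_nullity}. Your extra bookkeeping about lifting to $S[x;\sigma]$ and avoiding reduction modulo $x^n-1$, and your explicit rank--nullity contradiction, are just careful elaborations of steps the paper leaves implicit.
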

	 
	 \begin{proof}
	 	Since $f_M$ is monic, we can perform the right Euclidean algorithm and perform right division of $f$ by $f_M$; see for instance \cite{MR2006329}. Let $h, h' \in S[G]$ be such that $f=h\circ f_M+h'$ and either $h'=0$ or $\deg_{\sigma}(h')<r$. Since $f(M)=f_M(M)=\{0\}$, we have $h'(M)=\{0\}$ and Proposition \ref{prop:innerrank_lowerbound} yields $h'=0$.
	 \end{proof}

	 \noindent
	 We now define a truncated version of the Moore matrix as a rectangular  submatrix of the $G$-Moore matrix. For a positive integer $r$ and a vector $\alpha=(\alpha_1,\ldots \alpha_r)\in S^r$, we define the $s\times r$ matrix $\MG_{s,\sigma}(\alpha)$ as the matrix
	 $$ \MG_{s,\sigma}(\alpha)=(\sigma^i(\alpha_j))_{0\leq i \leq s-1,\, 1\leq j \leq r}. $$
	 For a module $M$ over $R$, we write \emph{elementary divisors} of $M$ meaning the images in $R$ of the elementary divisors of $M$ as a $\tilde{R}$-module. This is the same as taking the elementary divisors coming from the Smith normal form of $M$.
	 
	 \begin{proposition}\label{prop:restricted_moore}
	 	Let {$r\leq n$}, $\alpha=(\alpha_1,\ldots \alpha_r)\in S^r$, and  $(e_1)\supseteq \ldots \supseteq (e_n)$  be the elementary divisors of the $R$-submodule $\langle \alpha_1,\ldots,\alpha_r\rangle_R$ of $S$. Then there exist $A\in \GL(r,R)$ and
	 	$\beta\in S^r$ such that 
	 	$\MG_{r,\sigma}(\beta)\in \GL(r,S)$ and the following equality holds: 
	 	$$ \MG_{r,\sigma}(\alpha)A=\MG_{r,\sigma}(\beta)\diag(e_1, \ldots, e_r).$$
	 	Moreover, $\overline{\alpha}_1, \ldots,\overline{\alpha}_r$ are $\overline{R}$-linearly independent if and only if 
	 	$\MG_{r,\sigma}(\alpha)\in \GL(r,S)$. 
	 \end{proposition}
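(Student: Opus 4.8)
The plan is to separate the statement into its two assertions -- the Smith-type factorization and the linear-independence criterion -- and to establish the criterion first, since it is precisely what certifies that the matrix $\MG_{r,\sigma}(\beta)$ produced in the factorization is invertible. The one elementary fact I would use repeatedly is that, since $R=S^G$, every power $\sigma^i$ is an $R$-linear endomorphism of $S$; consequently, for $A=(a_{ij})\in\GL(r,R)$ one has $\MG_{r,\sigma}(\alpha A)=\MG_{r,\sigma}(\alpha)\,A$, where $\alpha A\in S^r$ denotes the vector with $j$-th entry $\sum_i a_{ij}\alpha_i$ (the columns of the Moore matrix recombine $R$-linearly), and for $e_1,\dots,e_r\in R$ one has $\MG_{r,\sigma}\big((e_1\beta_1,\dots,e_r\beta_r)\big)=\MG_{r,\sigma}(\beta)\,\diag(e_1,\dots,e_r)$. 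Hence the displayed identity is equivalent to the purely module-theoretic task of producing $A\in\GL(r,R)$ and $\beta\in S^r$, with $\MG_{r,\sigma}(\beta)\in\GL(r,S)$, such that $\alpha A=(e_1\beta_1,\dots,e_r\beta_r)$ in $S^r$; moreover one may assume $r\le n$.

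I would prove the criterion by reducing to the residue field. As $S$ is local with residue field $\overline{S}$, the matrix $\MG_{r,\sigma}(\alpha)$ lies in $\GL(r,S)$ if and only if $\det\MG_{r,\sigma}(\alpha)\in S^\times$, i.e.\ if and only if its image $\det\MG_{r,\sigma}(\overline\alpha)$ is nonzero in $\overline{S}$, i.e.\ if and only if $\MG_{r,\sigma}(\overline\alpha)\in\GL(r,\overline{S})$. Over the cyclic $G$-Galois extension $\overline{S}/\overline{R}$: if $\overline\alpha_1,\dots,\overline\alpha_r$ are $\overline{R}$-linearly dependent, then applying the $\overline{R}$-linear maps $\sigma^i$ to a nontrivial $\overline{R}$-relation among them yields a nontrivial relation among the columns of $\MG_{r,\sigma}(\overline\alpha)$, so that matrix is singular. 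Conversely, assuming $\overline\alpha_1,\dots,\overline\alpha_r$ are $\overline{R}$-linearly independent, I would induct on $r$; the case $r=1$ is trivial, and for the inductive step I consider $g(x)=\det\MG_{r,\sigma}\big((\overline\alpha_1,\dots,\overline\alpha_{r-1},x)\big)$. Cofactor expansion along the last column exhibits $g$ as a $\sigma$-polynomial whose top-degree coefficient is, up to sign, $\det\MG_{r-1,\sigma}\big((\overline\alpha_1,\dots,\overline\alpha_{r-1})\big)$, nonzero by the inductive hypothesis; hence $\deg_\sigma(g)=r-1$. By \cref{prop:innerrank_lowerbound} applied to $\overline{S}/\overline{R}$ (i.e.\ Ore's degree bound), $\ker(g)$ has $\overline{R}$-dimension at most $r-1$; but $g$ vanishes on $\langle\overline\alpha_1,\dots,\overline\alpha_{r-1}\rangle_{\overline{R}}$ (the defining matrix has a repeated column), which already has dimension $r-1$, so $\ker(g)$ equals this subspace. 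Since $\overline\alpha_r$ lies outside it, $g(\overline\alpha_r)=\det\MG_{r,\sigma}(\overline\alpha)\ne0$, and we are done.

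For the factorization, I would lift each $\alpha_i$ to $\tilde\alpha_i\in\tilde{S}$ and consider the $\tilde{R}$-submodule $\tilde{M}=\langle\tilde\alpha_1,\dots,\tilde\alpha_r\rangle_{\tilde{R}}$ of the free rank-$n$ module $\tilde{S}$ over the discrete valuation ring $\tilde{R}$. Smith normal form, applied to the inclusion $\tilde{M}\subseteq\tilde{S}$, provides an $\tilde{R}$-basis $\tilde b_1,\dots,\tilde b_n$ of $\tilde{S}$, elements $\tilde e_1\mid\tilde e_2\mid\dots\mid\tilde e_r$ of $\tilde{R}$ (some possibly zero, when $\tilde{M}$ has $\tilde{R}$-rank $<r$), and $\tilde A\in\GL(r,\tilde{R})$ with $\tilde\alpha\,\tilde A=(\tilde e_1\tilde b_1,\dots,\tilde e_r\tilde b_r)$. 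Reducing along the projection $\tilde{R}\to R$ (equivalently $\tilde{S}\to S$) produces $A\in\GL(r,R)$, an $R$-basis $b_1,\dots,b_n$ of $S$, and the elementary divisors $e_1,\dots,e_r$ of $\langle\alpha_1,\dots,\alpha_r\rangle_R$, with $\alpha A=(e_1 b_1,\dots,e_r b_r)$. Putting $\beta=(b_1,\dots,b_r)$ and using the first paragraph yields $\MG_{r,\sigma}(\alpha)\,A=\MG_{r,\sigma}(\beta)\,\diag(e_1,\dots,e_r)$; and since $\overline b_1,\dots,\overline b_r$ form part of the $\overline{R}$-basis $\overline b_1,\dots,\overline b_n$ of $\overline{S}$, they are $\overline{R}$-linearly independent, so the criterion just proved gives $\MG_{r,\sigma}(\beta)\in\GL(r,S)$.

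I expect the main obstacle to be the forward direction of the independence criterion over $\overline{S}$: it cannot be deduced from the invertibility of the full $n\times n$ $G$-Moore matrix, because leading principal minors of an invertible matrix need not themselves be invertible -- this is exactly where the $\sigma$-polynomial input (Ore's degree bound, equivalently \cref{prop:innerrank_lowerbound}) is indispensable. A secondary point requiring care is the convention: ``elementary divisors of the submodule $\langle\alpha_1,\dots,\alpha_r\rangle_R$'' should be read as the divisors occurring in the Smith normal form of the inclusion $\langle\alpha_1,\dots,\alpha_r\rangle_R\subseteq S$ (equivalently, the elementary divisors of the quotient $S/\langle\alpha_1,\dots,\alpha_r\rangle_R$), which is what makes them compatible with reduction from $\tilde{R}$ to $R$; and one must allow zero elementary divisors, in which case the matching columns on both sides of the displayed identity vanish identically.
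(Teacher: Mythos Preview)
Your proof is correct and takes a genuinely different route from the paper's.

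For the factorization, the paper works entirely inside $S/R$: it first picks $\beta$ so that $M=\langle e_1\beta_1,\dots,e_r\beta_r\rangle_R$, then constructs the invertible matrix $A$ by hand, splicing together a matrix $A_2$ that sends $\alpha$ to the nonzero generators $(e_1\beta_1,\dots,e_s\beta_s)$ with a matrix $C_2$ whose columns form an $R$-basis of a maximal free submodule of $\Ann_{R^r}(\alpha)$; showing that the resulting block matrix is actually in $\GL(r,R)$ requires \cref{lem:free_ker}. You instead lift to the DVR $\tilde{R}$ and invoke the full Smith normal form of the $n\times r$ presentation matrix, which hands you $\tilde A\in\GL(r,\tilde{R})$ for free; reducing modulo the kernel of $\tilde R\to R$ immediately gives the desired $A$. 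Your route is shorter and avoids the annihilator argument, at the cost of invoking the lift explicitly. For the independence criterion over the residue field, the paper simply cites \cite[Cor.~4.13]{lam1988vandermonde}, whereas you supply a self-contained inductive proof via the cofactor expansion, reading the last-column expansion as a $\sigma$-polynomial of degree $r-1$ and bounding its kernel by Ore's degree bound (the field case of \cref{prop:innerrank_lowerbound}). This makes your argument independent of the external reference. Both strategies are sound; yours is more streamlined for the factorization and more self-contained for the criterion, while the paper's stays intrinsic to $R$ and illustrates how \cref{lem:free_ker} does the work that Smith normal form would otherwise do.
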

	 
	 \begin{proof}
	 Set $M=\langle \alpha_1,\ldots,\alpha_r\rangle_R$ and note that 
	  $e_{r+1},\ldots,e_n$ are all equal to $0$, because $M$ has {inner} rank at most $r$.
	 	Let $\beta=(\beta_1,\ldots \beta_r)\in S^r$ be a vector such that $\langle\beta_1,\ldots,\beta_r\rangle_R$ is a free $R$-submodule of rank $r$ of $S$ and 
	 	$$M=\langle \alpha_1,\ldots,\alpha_r\rangle_R=\langle e_1\beta_1,\ldots,e_r\beta_r\rangle_R.$$  
	 	Set $s=\max\{i=1,\ldots, r : e_i \neq 0\}$, so that 
	 	 $s=\rki(M,R)$ and $M=\langle e_1\beta_1,\ldots,e_s\beta_s\rangle_R$. Then, there exist $A_1 \in \Mat_{s\times r}(R)$, $A_2 \in \Mat_{r\times s}(R)$  such that 
	 	 $$(e_1\beta_1,\ldots,e_s\beta_s)A_1= \alpha \ \textup{ and }\ \alpha A_2=(e_1\beta_1,\ldots,e_s\beta_s).$$ 
	 	 From the above equalities, we derive that $\diag((e_i)_{i=1,\ldots,s})A_1A_2=\diag((e_i)_{i=1,\ldots,s})$ and hence, for each $i\in\{1,\ldots,s\}$, there exists $v_i\in \Ann_R(e_i)$ with the property that 
	 	 $$A_1A_2=I_s+\diag((v_i)_{i=1,\ldots,s})D,$$
	 	 for some $D \in \Mat_s(R)$.
	 	 In particular, $A_1A_2\in\GL(s,R)$ and therefore $A_2$ admits a left inverse. At this point, consider the $R$-module $\Ann_{R^r}(\alpha)$. By definition, $\Ann_{R^r}(\alpha)$ coincides with the kernel of the map $\phi:R^r\rightarrow S$ given by $\phi(u)= u\alpha^\top$. Since the image of $\phi$ is $\langle \alpha_1,\ldots,\alpha_r\rangle_R$, we deduce from  \cref{lem:free_ker}(1) that $\frk(\Ann_{R^r}(\alpha),R)=r-s$. Moreover, thanks to \cref{lem:free_ker}(2), we choose a free $R$-submodule $N$ of $\Ann_{R^r}(\alpha)$ with the property that $\dim_{\overline{R}}(\overline{N})=r-s$. If we select a basis of $N$ and fill its elements as columns of a matrix $C_2\in \Mat_{r \times (r-s)}(R)$, we obtain that the matrix
	 	 $$ A=\left(\begin{array}{c|c} A_2 &
	 	  C_2 \end{array}\right)$$
	 	  belongs to $\GL(r,R)$ and 
	 	 satisfies $\alpha A=(e_1\beta_1,\ldots,e_r\beta_r)$.
	 	Since the entries of $A$ are fixed by $\sigma$, we have
	 	$$ \MG_{r,\sigma}(\alpha)A=\MG_{r,\sigma}(e_1\beta_1,\ldots,e_r\beta_r)=\MG_{r,\sigma}(\beta)\diag((e_i)_{i=1,\ldots,r}).$$
	 	The $R$-module $\langle\beta_1,\ldots,\beta_r\rangle_R$ is free of rank $r$ with elementary divisors all equal to $1$. Hence, $\overline{\beta}_1,\ldots,\overline{\beta}_r$ are $\overline{R}$-linearly independent and this holds if and only if $\MG_{r,\sigma}(\overline{\beta})$ belongs to $ \GL(r,\overline{S})$; see e.g.\ \cite[Cor.~4.13]{lam1988vandermonde}. This last property is in turn equivalent to $\MG_{r,\sigma}(\beta)$ belonging to $\GL(r,S)$ and this concludes the proof.
	 \end{proof}
	 
	 \noindent
	 In the following result, we give an explicit expression for the annihilator polynomial of a free $R$-submodule of $S$. {For the analogue result over finite fields see for instance \cite{Sheekey/16}.}
	 
	 \begin{proposition}\label{prop:explicit_annihilator}
	 	Let $M$  be a free $R$-submodule of $S$ {of rank $r\leq n$} such that $\dim_{\overline{R}}(\overline{M})=r$ and let $\beta=(\beta_1,\ldots,\beta_r)$ be an $R$-basis of $M$. 
	 	Then the annihilator polynomial of $M$ is given by
	\begin{equation}\label{eq:annihilator_formula}f_M=h_r^{-1}\sum_{i=0}^r(-1)^{r-i}h_i\sigma^i,\end{equation}
	 	where $h_i$ is the determinant of the matrix obtained by removing the $(i+1)$-th row from $\MG_{r+1,\sigma}(\beta)$.
	 \end{proposition}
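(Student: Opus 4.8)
The plan is to recognize the right-hand side of \eqref{eq:annihilator_formula} as a monic $\sigma$-polynomial of $\sigma$-degree $r$ annihilating $M$, and then to invoke the uniqueness statement of \cref{prop:annihilator_recursive} to identify it with $f_M$. First I would check that the formula even makes sense. Deleting the top row ($i=0$) through the bottom row ($i=r$) of the $(r+1)\times r$ matrix $\MG_{r+1,\sigma}(\beta)$, the choice $i=r$ leaves precisely $\MG_{r,\sigma}(\beta)$, so $h_r=\det \MG_{r,\sigma}(\beta)$. Since $\dim_{\overline R}(\overline M)=r$ and $(\beta_1,\dots,\beta_r)$ is an $R$-basis of $M$, the reductions $\overline\beta_1,\dots,\overline\beta_r$ form an $\overline R$-basis of $\overline M$ and are in particular $\overline R$-linearly independent; by \cref{prop:restricted_moore} this forces $\MG_{r,\sigma}(\beta)\in\GL(r,S)$, so $h_r\in S^\times$ and the factor $h_r^{-1}$ in \eqref{eq:annihilator_formula} is legitimate. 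Write $g=h_r^{-1}\sum_{i=0}^r(-1)^{r-i}h_i\sigma^i\in S[G]$. The coefficient of $\sigma^r$ in $\sum_{i=0}^r(-1)^{r-i}h_i\sigma^i$ is $(-1)^{0}h_r=h_r$, so $g$ is monic; since $h_r\neq 0$ its $\sigma$-degree is exactly $r$.

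Next I would verify that $g(\beta_j)=0$ for every $j\in\{1,\dots,r\}$. Using $(-1)^{-i}=(-1)^i$ we have $g(\beta_j)=(-1)^r h_r^{-1}\sum_{i=0}^r(-1)^i h_i\,\sigma^i(\beta_j)$, and the inner sum $\sum_{i=0}^r(-1)^i h_i\,\sigma^i(\beta_j)$ is exactly the Laplace expansion along the first column of the $(r+1)\times(r+1)$ matrix obtained by prepending the column $\big(\sigma^i(\beta_j)\big)_{0\le i\le r}$ to $\MG_{r+1,\sigma}(\beta)$. That matrix has its first column equal to its $(j{+}1)$-st column, so its determinant is $0$; hence $g(\beta_j)=0$. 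Because $R=S^G$, each $\sigma^i$ fixes $R$ pointwise and is additive, so evaluation of $g$ is $R$-linear and $g$ therefore vanishes on all of $M=\langle\beta_1,\dots,\beta_r\rangle_R$, i.e.\ $g(M)=\{0\}$.

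Finally, $g$ and $f_M$ are both monic $\sigma$-polynomials of $\sigma$-degree $r$ with $g(M)=f_M(M)=\{0\}$, so $g=f_M$ by the uniqueness part of \cref{prop:annihilator_recursive}, which is \eqref{eq:annihilator_formula}. The only points that require care are the index-and-sign bookkeeping in the cofactor expansion and the observation that $h_r$ is a unit; once \cref{prop:restricted_moore} and the uniqueness in \cref{prop:annihilator_recursive} are available there is no genuine obstacle. (The degenerate case $r=0$ is consistent with the convention that the empty determinant equals $1$, so that \eqref{eq:annihilator_formula} reads $f_M=\mathrm{id}$, in agreement with \eqref{eq:annihilator_poly}.)
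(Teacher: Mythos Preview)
Your proof is correct and follows essentially the same route as the paper: identify $h_r=\det\MG_{r,\sigma}(\beta)$ as a unit via \cref{prop:restricted_moore}, interpret $\sum_i(-1)^{r-i}h_i\sigma^i$ evaluated at an element as a cofactor expansion of an $(r{+}1)\times(r{+}1)$ determinant with a repeated column, and conclude by the uniqueness clause of \cref{prop:annihilator_recursive}. The only cosmetic differences are that the paper appends the column $v_\alpha=(\alpha,\sigma(\alpha),\dots,\sigma^r(\alpha))^\top$ on the right of $\MG_{r+1,\sigma}(\beta)$, obtaining the clean identity $f(\alpha)=\det(\MG_{r+1,\sigma}(\beta)\mid v_\alpha)$ without the extra global sign $(-1)^r$, and that the paper records the slightly stronger byproduct $f(\alpha)=0\Leftrightarrow\alpha\in M$ (via \cref{prop:restricted_moore}), which is not needed for the statement itself.
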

	 
	 \begin{proof}
	 	Let $f \in S[G]$ be the $\sigma$-polynomial $f=\sum_i(-1)^{r-i}h_i\sigma^i$ and let $\alpha\in S$. Then, for $v_\alpha=(\alpha,\sigma(\alpha),\ldots,\sigma^r(\alpha))^\top$, it is easy to check that 
	 	$$ f(\alpha)=\det( \MG_{r+1,\sigma}(\beta) \mid v_\alpha).$$
	 	From Proposition \ref{prop:restricted_moore} we derive that $f(\alpha)=0$ if and only if $\alpha \in M$.
	 	Moreover, the leading coefficient of $f$ is $h_r=\det(\MG_{r,\sigma}(\beta))$. Thanks again to Proposition \ref{prop:restricted_moore}, $\MG_{r,\sigma}(\beta)$ is invertible and hence $h_r=\det(\MG_{r,\sigma}(\beta))$ is a unit. Thus, $h_r^{-1}f$ belongs to $S[G]$, is monic and $\ker(h_r^{-1}f)=\ker(f)=\ker(f_M)$.  The statement follows from Proposition \ref{prop:annihilator_recursive}.
	 \end{proof}
	 
	 \noindent
	 In a similar way to what is done in \cite[Lem.~3]{Sheekey/16}, the next result gives a necessary condition for $\sigma$-polynomials to have maximum kernel. Denote by $\Norm{S}{R}(\alpha)$ the norm of an element $\alpha \in S$ with respect to the ring extension $S/R$, that is
	  $$\Norm{S}{R}(\alpha)=\prod_{g\in G} g(\alpha) = \prod_{i=0}^{n-1} \sigma^i(\alpha). $$
	 
	 \begin{theorem}\label{th:norm}
	 	Let $\ell\leq n$ and  $f=f_0\mathrm{id}+\ldots+f_\ell\sigma^\ell\in S[G]$ be  such that $\rki(f)=n-\ell$. Then one has
	 	$\Norm{S}{R}(f_0)=(-1)^{\ell n}\Norm{S}{R}(f_\ell)$. 
	 \end{theorem}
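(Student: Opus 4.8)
The plan is to show that the hypothesis forces $f$ to be a scalar multiple of the annihilator polynomial of a suitable free submodule of $\ker f$, and then to read off $f_0/f_\ell$ from the Moore-determinant formula of \cref{prop:explicit_annihilator}; once $f_0/f_\ell$ is in that form, the norm telescopes. As a first step I would check that $f_\ell\in S^\times$, so that $f_0/f_\ell$ is a genuine element of $S$. Indeed, by \cref{lem:rank_nullity}(1) one has $\frk(\ker f,R)=n-\rki(f,R)=\ell$, and by \cref{lem:free_ker}(2) there is a free $R$-submodule $M\subseteq\ker f$ with $\dim_{\overline R}(\overline M)=\ell$; since $\overline M\subseteq\ker(\overline f)$ this gives $\dim_{\overline R}\ker(\overline f)\ge\ell$, hence $\deg_\sigma(\overline f)\ge\ell$ by the field case used in the proof of \cref{prop:innerrank_lowerbound}, i.e. $\overline{f_\ell}\ne 0$.

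Next I would identify $f$, up to the scalar $f_\ell$, with an annihilator polynomial. Keeping the submodule $M$ above, \cref{cor:monic_generator} applied to $f$ (which vanishes on $M$) yields $h\in S[G]$ with $f=h\circ f_M$. Comparing $\sigma$-degrees, using that $f_M$ is monic of $\sigma$-degree $\ell$ and that $\deg_\sigma(f)=\ell$, forces $h$ to be a constant; comparing leading coefficients then gives $h=f_\ell\cdot\mathrm{id}$, so $f=f_\ell\,f_M$, and therefore $f_0/f_\ell$ is exactly the constant coefficient of $f_M$.

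It then remains to compute that constant coefficient and take its norm. Fix an $R$-basis $\beta=(\beta_1,\dots,\beta_\ell)$ of $M$ and apply \cref{prop:explicit_annihilator}: $f_M=h_\ell^{-1}\sum_{i=0}^\ell(-1)^{\ell-i}h_i\sigma^i$, where $h_i$ is the determinant of $\MG_{\ell+1,\sigma}(\beta)$ with its $i$-th row deleted. The key observation is that $h_\ell=\det\MG_{\ell,\sigma}(\beta)=:D$ is a unit (already part of the proof of \cref{prop:explicit_annihilator}), while the matrix computing $h_0$ is obtained from $\MG_{\ell,\sigma}(\beta)$ by applying $\sigma$ to every entry, so $h_0=\sigma(D)$. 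Hence the constant coefficient of $f_M$ is $(-1)^\ell D^{-1}\sigma(D)$, that is $f_0/f_\ell=(-1)^\ell\sigma(D)/D$. Finally, using $\Norm{S}{R}(\alpha)=\prod_{j=0}^{n-1}\sigma^j(\alpha)$ and that $D$ is a unit,
\[
\Norm{S}{R}\!\left(\frac{f_0}{f_\ell}\right)=\prod_{j=0}^{n-1}\sigma^j\!\left((-1)^\ell\frac{\sigma(D)}{D}\right)=(-1)^{\ell n}\prod_{j=0}^{n-1}\frac{\sigma^{j+1}(D)}{\sigma^{j}(D)}=(-1)^{\ell n}\,\frac{\sigma^n(D)}{D}=(-1)^{\ell n},
\]
since $\sigma^n=\mathrm{id}$, which is the claim.

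I do not expect a deep obstacle here: the only real work is in the first two steps, where one must run the rank-nullity and right-division arguments carefully over the possibly non-reduced local ring $S$ rather than over a field — but this is precisely what \cref{lem:free_ker}, \cref{cor:monic_generator} and \cref{prop:explicit_annihilator} were set up to allow, so no new idea is needed. The one genuinely computational point is the identity $h_0=\sigma(h_\ell)$, which is what makes the norm telescope to a sign.
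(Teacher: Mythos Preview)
Your proposal is correct and follows essentially the same route as the paper: pass to a free submodule $M\subseteq\ker f$ via \cref{lem:rank_nullity}/\cref{lem:free_ker}, use \cref{cor:monic_generator} to identify $f$ with a scalar multiple of $f_M$, then read off $f_0/f_\ell$ from the Moore-determinant formula of \cref{prop:explicit_annihilator} and telescope the norm of $\sigma(D)/D$. Your additional opening verification that $f_\ell\in S^\times$ is a welcome bit of care that the paper leaves implicit.
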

	 
	 \begin{proof}
    	 	It follows from the hypotheses and Lemma \ref{lem:rank_nullity}(1) that $\frk(\ker(f),R)=\ell$. Thanks to Lemma \ref{lem:free_ker}(2) there exists   a free $R$-submodule $M$ of $\ker(f)$ of rank $\ell$ such that $\dim_{\overline{R}}(\overline{M})=\ell$. \
       By Corollary \ref{cor:monic_generator}, there exists $F\in S[G]$ such that $f=F\circ f_M$. Since $\deg_\sigma f=\deg_\sigma f_M$ and $f_M$ is monic, $F$ has $\sigma$-degree $0$, i.e. there exists $\lambda \in S$ such that $f=\lambda f_M$. Thus, $f_0=\lambda (f_M)_0$ and $f_\ell=\lambda (f_M)_\ell=\lambda$. Let $\beta=(\beta_1,\ldots,\beta_\ell)$ be an $R$-basis of $M$, and let 
       $h_i$ be the determinant of the $\ell\times \ell$ submatrix of $M_{\ell+1,\sigma}(\beta)$ obtained by removing the $(i+1)$-th row, for $i\in\{0,\ldots,\ell\}$.
        In particular, we can observe that 
        $$ h_0=\det(((\sigma^i(\beta_j))_{1\leq i\leq \ell,1\leq j\leq \ell})=\sigma(\det(((\sigma^i(\beta_j))_{0\leq i\leq \ell-1,1\leq j\leq \ell}))=\sigma(h_\ell). $$
	 	 From Proposition \ref{prop:explicit_annihilator} we obtain
	 	$$f_0=\lambda (f_M)_0=\lambda h_\ell^{-1}(-1)^\ell h_0=(-1)^\ell\lambda h_\ell^{-1}\sigma(h_\ell) =(-1)^\ell f_\ell\sigma(h_\ell)h_\ell^{-1}.$$
	 	Therefore, we deduce
	 	$$ \Norm{S}{R}(f_0)=(-1)^{\ell n}\Norm{S}{R}(f_\ell).$$
	 \end{proof}

	 \subsection{Rank-metric codes over local principal ideal rings}\label{sec:gabidulin}
	 Rank-metric codes were introduced by Delsarte in \cite{MR514618} and by Gabidulin in \cite{MR791529} only for a combinatorial interest. They were originally thought as either sets of matrices over a finite field or as sets of vectors over an extension field. However, they can be viewed in many equivalent ways. In the case of square matrices and when the field of definition $\mathbb E$ admits a $G$-Galois extension $\F$, then it was shown that sets of matrices can be equivalently represented as subsets of $\F[G]$ and hence also rank-metric codes; see e.g.\ \cite{ACLN20+}. Here, the rank induces a metric, called the \emph{rank-metric}. Concretely, we can actually extend this to the case of rings. For this, let $S/R$ be a $G$-Galois extension. Then 
	 the distance function is the map $d:S[G]\times S[G] \rightarrow \Z_{\geq 0}$, given by
	 $$(f,g) \longmapsto d(f,g)=\rki(f-g,R). $$
	 Until the end of the present section, we restrict ourselves to the same setting from Section \ref{sec:rank_polynomials}, that is $S/R$ is an unramified $G$-Galois extension of local principal ideal rings.
	 
	 \begin{definition}
	 	A \emph{rank-metric code} $\Ccal$ is a subset of $S[G]$ endowed with the rank distance. Moreover, the \emph{minimum rank distance} of $\Ccal$ is the integer
	 	$$ d(\Ccal)= \min \{ d(f,g) : f,g \in \Ccal, f\neq g\}.$$
	 	If $\Ccal$ is an $R$-submodule of $S[G]$, then the code $\Ccal$ is said to be \emph{$R$-linear}.
	 \end{definition}
	 
	 \noindent
	 Observe that for an $R$-linear rank-metric code $\Ccal$ we have
	 $$ d(\Ccal)= \min \{ \rki(f,R) : f\in \Ccal\setminus\{0\}\}.$$
	 The following result can be obtained by generalizing, in a straightforward way, the techniques from \cite{MR514618,MR791529,MR1306980,MR4038895}.
	 
	 \begin{proposition}[Singleton-like bound]\label{prop:SBound}
	 	Let $\Ccal\subseteq S[G]$ be an $R$-linear rank-metric code. Then the following inequality holds
	 	\begin{equation}\label{eq:Singleton} 
	 		\rki(\Ccal,R)\leq n(n-d(\Ccal)+1).
	 	\end{equation}
	 	Moreover, if $S$ is finite and $\Ccal\subseteq S[G]$ is any (non-necessarily linear) rank-metric code, then
	 	\begin{equation}\label{eq:Singleton2}
	 		\log_{|R|}|\Ccal|\leq \rki(\Ccal,R)\leq n(n-d(\Ccal)+1). 
 		\end{equation}
	 \end{proposition}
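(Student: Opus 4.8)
The plan is to run the classical puncturing (or ``shortening'') argument behind the Singleton bound for rank-metric codes, adapted to modules over the local principal ideal ring $R$. Using the isomorphism $\varphi$ of Proposition~\ref{prop:iso-phi}, identify $S[G]$ with $\End_R(S)$; recall from the standing hypotheses of \cref{sec:rank_polynomials} that $S$ is a free $R$-module of rank $n$ and that $R,S$ are quotients of DVRs $\tilde R\subseteq\tilde S$. Since an inner rank never exceeds $\rk_R S=n$, we have $d(\Ccal)\le n$, so we may fix an $R$-basis of $S$ and split $S=T\oplus W$ into free $R$-submodules with $\rk_R T=n-d(\Ccal)+1$ and $\rk_R W=d(\Ccal)-1$. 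Let $p\colon S\to T$ be the projection killing $W$ and define the $R$-linear \emph{shortening map}
\[
\rho\colon \End_R(S)\longrightarrow \Hom_R(S,T),\qquad f\longmapsto p\circ f,
\]
whose codomain is a free $R$-module of rank $n(n-d(\Ccal)+1)$.

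The crucial step is that $\rho$ is injective on $\Ccal$. If $f,g\in\Ccal$ satisfy $\rho(f)=\rho(g)$, then $\mathrm{im}(f-g)\subseteq\ker p=W$. Now $W$ is free of rank $d(\Ccal)-1$ over $R$, and since $R=\tilde R/\m_{\tilde R}^{\,k}$ for some $k$, every submodule of $W$ is generated by at most $d(\Ccal)-1$ elements: lift to $\tilde R^{\,d(\Ccal)-1}$, where submodules of a free module over the PID $\tilde R$ are free of rank $\le d(\Ccal)-1$, and reduce a basis. Hence $\rki(f-g,R)=\rki(\mathrm{im}(f-g),R)\le d(\Ccal)-1<d(\Ccal)$, which forces $f=g$ by definition of the minimum distance. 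The same lifting argument shows that any submodule of a free $R$-module of rank $N$ is generated by $\le N$ elements, a fact used twice below.

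For \eqref{eq:Singleton}, assume $\Ccal$ is $R$-linear; then $\rho$ identifies $\Ccal$ with an $R$-submodule of $\Hom_R(S,T)\cong R^{\,n(n-d(\Ccal)+1)}$, so $\rki(\Ccal,R)=\rki(\rho(\Ccal),R)\le n(n-d(\Ccal)+1)$. For \eqref{eq:Singleton2}, assume $S$ (hence $R=S^G$) is finite. Injectivity of $\rho|_\Ccal$ gives $|\Ccal|=|\rho(\Ccal)|\le|\Hom_R(S,T)|=|R|^{\,n(n-d(\Ccal)+1)}$, i.e.\ $\log_{|R|}|\Ccal|\le n(n-d(\Ccal)+1)$. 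On the other hand, for any finitely generated $R$-module $M$ a minimal generating set yields a surjection $R^{\rki(M,R)}\twoheadrightarrow M$, whence $|M|\le|R|^{\rki(M,R)}$; applying this to the $R$-span $M=\langle\rho(\Ccal)\rangle_R\subseteq R^{\,n(n-d(\Ccal)+1)}$, whose inner rank is therefore $\le n(n-d(\Ccal)+1)$, yields $\log_{|R|}|\Ccal|=\log_{|R|}|\rho(\Ccal)|\le\rki(\langle\rho(\Ccal)\rangle_R,R)\le n(n-d(\Ccal)+1)$, which is the displayed chain \eqref{eq:Singleton2} once $\rki(\Ccal,R)$ is read, for a non-linear $\Ccal$, as the inner rank of an associated $R$-span; note that $\log_{|R|}|\Ccal|\le\rki(\langle\Ccal\rangle_R,R)$ also holds by the same cardinality estimate.

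I do not anticipate any real obstacle: this is the standard Singleton/puncturing proof, and everything non-formal is absorbed into two elementary facts about modules over a principal ideal ring — the bound on the number of generators of a submodule of a free module, and the cardinality bound via a minimal generating set — both of which reduce instantly to the PID case over the DVR covers $\tilde R,\tilde S$ furnished by the hypotheses. The only point worth stating carefully is the notational one above: since a non-linear code is not an $R$-module, the symbol $\rki(\Ccal,R)$ in \eqref{eq:Singleton2} must be interpreted as the inner rank of an associated span (of $\Ccal$, or equivalently of its shortening $\rho(\Ccal)$).
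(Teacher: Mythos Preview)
Your argument is correct and is precisely the ``straightforward generalization'' the paper alludes to: the paper does not actually give a proof of \cref{prop:SBound}, it only points to \cite{MR514618,MR791529,MR1306980,MR4038895}, and the puncturing/shortening argument you wrote is exactly the technique used in those references, transported to the local-PIR setting via the lift to the DVR $\tilde R$.

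Two minor remarks. First, your injectivity step works uniformly for the linear and non-linear statements because you phrased it as $\rki(f-g,R)\le d(\Ccal)-1<d(\Ccal)$, which is the right thing to do; good. Second, your closing caveat about $\rki(\Ccal,R)$ for a non-linear $\Ccal$ is well taken and is really a remark about the paper's notation rather than about your proof. The only reading under which the full chain \eqref{eq:Singleton2} holds is the one you give, namely $\rki(\Ccal,R):=\rki(\langle\Ccal\rangle_R,R)$; note however that the right-hand inequality $\rki(\langle\Ccal\rangle_R,R)\le n(n-d(\Ccal)+1)$ does \emph{not} follow from your shortening map applied to $\langle\Ccal\rangle_R$ (its minimum distance may drop), so your detour through $\langle\rho(\Ccal)\rangle_R$ is the correct way to obtain the displayed chain. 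You might state explicitly that $\rho(\langle\Ccal\rangle_R)=\langle\rho(\Ccal)\rangle_R$, so that $\rki(\langle\Ccal\rangle_R,R)$ is bounded by $\rki(\langle\rho(\Ccal)\rangle_R,R)\le n(n-d(\Ccal)+1)$ only when $\rho$ is injective on the span; in general one gets just $\log_{|R|}|\Ccal|\le n(n-d(\Ccal)+1)$, which is in any case the content the paper uses downstream.
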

	 
	 \noindent
	 We remark that, for finite $S$,  the equality $\log_{|R|}|\Ccal|= \rki(\Ccal,R)$ is achieved precisely when $\Ccal$ is a free $R$-submodule of $S[G]$. The following definition generalizes that of rank-metric codes over fields.
	 
	 \begin{definition}
	 	A rank-metric code $\Ccal\subseteq S[G]$ is called \emph{maximum rank distance} (MRD), if it is a free $R$-submodule such that \eqref{eq:Singleton} is met with equality.
	 \end{definition}
	 
	 \noindent
	 We remark that, by their definition, rank-metric codes are necessarily linear. Some authors include non-linear codes meeting the bound of \eqref{eq:Singleton2} with equality in the family of MRD codes, but this is not the case here. 
	 
	 The following result shows that, when $R$ is finite, the structural parameters of a rank-metric code $\Ccal$ are completely determined by its residue image $\overline{\Ccal}$. We remark that \cref{thm:MRDiff} is analogous to a theorem given in \cite{walker} for the case of the Hamming metric, which was extended in \cite{KK} to the case of the rank-metric. In both works the equality $d(\Ccal)\leq d(\overline{\Ccal})$ is proven.
	 
	 	 \begin{theorem}\label{thm:MRDiff}
	  Assume that $R$ is finite and let $\Ccal$ be a free $R$-module of $S[G]$. Then $$d(\Ccal)=d(\overline{\Ccal}) \ \ \textup{ and } \ \  \rki(\Ccal,R)=\dim_{\overline{K}}(\overline{\Ccal}).$$
	    Moreover, $\Ccal$ is MRD in $L[G]$ if and only if $\overline \Ccal$ is MRD in $\overline L[G]$.
	 \end{theorem}
	 \begin{proof} 
The equality $\rki(\Ccal,R)=\dim_{\overline{K}}(\overline{\Ccal})$ follows from the definitions and the fact that $\Ccal$ is free. We now show that $d(\Ccal)=d(\overline{\Ccal})$. To this end, set $t=\min\{ i \in \mathbb N : \pi^i=0 \}$ be the nilpotency index of $R$ and define the function
\[
{\rm wt}:\Ccal\setminus\{0\} \longrightarrow \{0,\ldots,t-1\}, \quad f\longmapsto {\rm wt}(f)=\max\{i : f\in \pi^i\Ccal \}.
\]
We use the last map to define $\varphi: \Ccal\setminus\{0\}\rightarrow \pi^{t-1}\Ccal\setminus\{0\}$ by sending $f$ to $\pi^{t-{\rm wt}(f)-1}f$. 
Note that, for each $f\in\Ccal\setminus\{0\}$, one has 
$\rki(\varphi(f),R)\leq \rki(f,R)$ and so we derive
\[
d(\Ccal)=\min\{\rki(f,R) : f\in \pi^{t-1}\Ccal\setminus\{0\}\}.
\]
Defining now $\rho:\overline{\Ccal}\rightarrow\pi^{t-1}\Ccal$ to be the isomorphism of $R$-modules induced by scalar multiplication by $\pi^{t-1}$, we get 
\[
d(\Ccal)=\min\{\rk(\rho^{-1}(f)) : f\in \pi^{t-1}\Ccal\setminus\{0\}\}=d(\overline{\Ccal}).
\]
The last part of the statement readily follows.  
	 \end{proof}

	 \noindent
	 At this point we introduce two families of MRD codes naturally generalizing finite field constructions within our setting. 
	 
	 \begin{definition}
	 	For a fixed positive integer $\ell \leq n$, the \emph{Gabidulin code} $\cor{G}_{\ell,\sigma}$ is 
	 	$$ \mathcal G_{\ell,\sigma}= \left\{ \sum_{i=0}^{\ell-1} f_i \sigma^i : f_i \in S \right\}\subseteq S[G].$$
	 \end{definition}
	 
	 \noindent Gabidulin codes were introduced independently in \cite{MR514618} and in \cite{MR791529} over finite fields, while over general fields they have been first constructed in \cite{MR1306980} and later rediscovered in \cite{MR2503966} and \cite{augot2013rank}. Very recently, this family of codes has also been generalized to finite principal ideal rings in \cite{MR4038895}. In all these settings the authors showed that the constructed codes are MRD. The following result shows that, also in the setting of Galois extensions of local principal ideal rings, Gabidulin codes are MRD.
	 
	 \begin{proposition}\label{prop:Gab_are_MRD}
	 	Let $0<\ell\leq n$ be integers. Then the $R$-module $\mathcal G_{\ell,\sigma}$ is an MRD code.
	 \end{proposition}
	 
	 \begin{proof}
	 	This is a direct consequence of Proposition \ref{prop:innerrank_lowerbound}. 
	 \end{proof}
	 
	 \noindent
	 The following is the second most well-known construction of MRD codes. 
	 
	 \begin{definition}
	 	Let $\ell\leq n$ be a positive integer, $h\in\mathbb Z$, and $\eta \in S\setminus \{0\}$. The module 
	 	$$\mathcal H_{\ell,\sigma}(\eta,h)=\left\{\sum_{i=0}^{\ell-1} f_i \sigma^i + \eta \sigma^h(f_0)\sigma^\ell : f_i \in S \right\}\subseteq S[G] $$
	 	is called a \emph{twisted Gabidulin code}.
	 \end{definition}
	 
	 \noindent This family of codes was recently introduced by Sheekey in \cite{Sheekey/16} 
	 and it is the first general construction of MRD codes after Gabidulin codes.   However, the property of being MRD depends on the element $\eta$, which needs to have a prescribed norm. In Sheekey's work, the construction was only given for the case of $R$ and $S$ being finite fields, but it is not difficult to extend it to general fields.  We now show that this family can be further generalized to MRD codes over local principal ideal rings. 
	 	 \begin{theorem}\label{thm:sheekey_rings}
Let $0<\ell\leq n$ be integers.	 	Let, moreover,  $h\in\mathbb Z$ and $\eta\in S$ satisfy the following:
	 	\begin{enumerate}[label=$(\arabic*)$]
	 	    \item If $R$ is infinite, then $\Norm{S}{R}(\eta)\neq (-1)^{\ell n}$. 
	 	\item If $R$ is finite, then $\Norm{\overline{L}}{\overline{K}}(\overline\eta)\neq (-1)^{\ell n}$. 
	 	\end{enumerate}
	 	Then the $R$-module $\mathcal H_{\ell,\sigma}(\eta,h)$ is an 
	 	MRD code.
	 \end{theorem}
	 \begin{proof}
	     	Let $f=f_0\mathrm{id}+\ldots+f_\ell\sigma^\ell\in \mathcal H_{\ell,\sigma}(\eta,h)$  be a nonzero $\sigma$-polynomial. We will show that $\rki(f,R)\geq n-\ell+1$. If $\deg_{\sigma}(f)\leq \ell-1$, then, by Proposition \ref{prop:innerrank_lowerbound}, we automatically have $\rki(f,R)\geq n-\ell+1$. If $\deg_{\sigma}(f)=\ell$, assume by contradiction that $\rki(f,R)= n-\ell$. Then Theorem \ref{th:norm}  yields 
	 $$\Norm{S}{R}(f_0)=(-1)^{\ell n}\Norm{S}{R}(f_\ell)=(-1)^{\ell n}\Norm{S}{R}(\eta)\Norm{S}{R}(f_0)$$
	     or, in other words 
	     \begin{equation}\label{eq:norm_proof}
	         \Norm{S}{R}(f_0)(\Norm{S}{R}(\eta)-(-1)^{\ell n})=0.
	     \end{equation}
 If $R$ is infinite, then there are no non-trivial zero divisors in $S$, and thus \eqref{eq:norm_proof} contradicts the hypothesis. If, on the contrary, $R$ is finite, then we use Theorem \ref{thm:MRDiff} combined with the necessary condition of twisted Gabidulin codes over fields to be MRD; see \cite{Sheekey/16}.
	 \end{proof}
	 
\noindent
As we saw with \cref{thm:MRDiff}, when $R$ is finite, a code $\Ccal$ is MRD if and only if its image $\overline{\Ccal}$ over $\overline{K}$ is MRD in the classical sense. This suggests that the more interesting setting to be explored is that of MRD codes defined over (infinite) discrete valuation rings and the variation of their parameters over the associated residue field. We investigate this question in the next section and in the language of affine buildings.

	 \section{A digression into rank-metric codes via Bruhat-Tits buildings}
	 \label{sec-bruhat}
	 
	 \noindent
	 In this section, we study the behaviour of the dimension of a linear space of matrices when passing to the residue field, by making a connection with \emph{Mustafin varieties}. In \cref{pre-brutits}, we outline some classical results in the theory of Bruhat-Tits buildings. Moreover, in \cref{subsec:arithmfound}, we give a short introduction to the arithmo-geometric foundations necessary for our work. In \cref{pre-rat,,sec:mustafin,,subsec:class} we introduce Mustafin varieties and state a classification result from \cite{hahn2020mustafin}. Finally, in \cref{subsec:rankmet}, we connect the theory of rank-metric codes to the notion of convexity in Bruhat-Tits buildings.
	 
	 \subsection{The Bruhat-Tits building}
	 \label{pre-brutits}
	 The theory of Bruhat-Tits buildings originated from the rich theory of reductive algebraic groups \cite{MR788969, MR896503, MR2320966}. In this paper, we focus on the building $\mathfrak{B}_d$ associated to the reductive group $\mathrm{PGL}(V)$ (i.e. the group of linear isomorphisms of $V$ up to scaling), where $V$ is a $d$-dimensional vector space over the discretely valued field $K$. 
	 
	 \noindent
	 We call a \emph{lattice} in $V$ any $\Ocal_K$-submodule of $V$ of rank $d$. For a lattice $\Lambda$ in $V$, we denote by $[\Lambda] =\left\{ c \Lambda \colon c \in K^\times \right\}$ the \emph{homothety class} of $\Lambda$. 
	 The set of homothety classes of lattices is denoted by
	$\Bfrak_d^0 = \{ [\Lambda] \colon \Lambda \textrm{ is a lattice in } V \}$. We say that two lattice classes $[\Lambda_1], [\Lambda_2]$ are \emph{adjacent} if there exists $n \in \Z$ such that
	 \[
	 \pi^{n+1} \Lambda_1 \subseteq \Lambda_2 \subseteq \pi^{n} \Lambda_1.
	 \]
	 The  \emph{Bruhat-Tits} building $\Bfrak_d$ is the infinite flag simplicial complex $\Bfrak_d$, whose set of vertices is $\Bfrak_d^0$ and whose edges are given by the adjacency relation defined above. The simplices in $\Bfrak_d$ are the subsets of $\Bfrak_d^0$ whose elements are pairwise adjacent. For a detailed treatment of buildings, we refer the reader to the book by P. Abramenko and K. Brown \cite{MR2439729}. The Bruhat-Tits building $\Bfrak_d$ is endowed with a notion of convexity given in the following definition.
	 
	 \begin{definition}
	 	Let $\Gamma$ be a set of homothety classes of lattices. Then $\Gamma$ is \emph{convex} if, for any two lattice classes $[\Lambda_1], [\Lambda_2] \in \Gamma$ and any $\tilde{\Lambda}_1 \in [\Lambda_1]$ and ${\tilde{\Lambda}_2} \in  [\Lambda_2]$, we have $[\tilde{\Lambda}_1 \cap \tilde{\Lambda}_2] \in \Gamma$.
	 \end{definition}
	 
	 \noindent
	 The \emph{convex hull} $\mathrm{conv}(\Gamma)$ of a set $\Gamma$ of homothety classes of lattices is the smallest convex set in the building $\mathfrak{B}_d$  containing $\Gamma$. It is known that, when $\Gamma=\{[\Lambda_1],\dots,[\Lambda_n]\}$  is finite, then its convex hull  
	 \begin{equation}
	 	\mathrm{conv}(\Gamma) = \left\{\left[\bigcap_{i=1}^n\pi^{m_i}\Lambda_i\right] \colon m_1,\dots,m_n\in\mathbb{Z}\right\}
	 \end{equation}
	 is also finite (see \cite{MR2367213, zhang2020computing}).
	 
	 \begin{example}[The building $\Bfrak_2(\Q_2)$]
	 	In Figure \ref{fig:buildingb2}, we illustrate a local realization of the Bruhat-Tits building $\Bfrak_2$ over $\mathbb{Q}_2$. In this case, $\Bfrak_2$ is an infinite three-valent tree.

	 	\begin{figure}[ht]
	 		\centering
	 		\resizebox{0.7\textwidth}{!}{
	 			\begin{tikzpicture}
	 				\filldraw[] (0,0) circle [radius=0.1] node[above = 3.75mm] {$\begin{pmatrix}1 & 0 \\ 0 & 1\end{pmatrix}$};
	 				\draw[, thick] (0,0) -- (0, -4);
	 				\draw[, thick] (0,0) -- (3.464, 2);
	 				\draw[, thick] (0,0) -- (-3.464, 2);
	 				\filldraw[] (0,-4) circle [radius=0.1] node[above right = 1mm = 3.75mm] {$\begin{pmatrix}1 & 0 \\ 1 & 2\end{pmatrix}$};
	 				\filldraw[]  (-1.732, -5) circle [radius = 0.1] node[above=1.5mm] {$\begin{pmatrix}1 & 0 \\ 3 & 4\end{pmatrix}$};
	 				\draw[, thick] (-1.732, -5) -- (-2.2, -5.7);
	 				\draw[, thick] (-1.732, -5) -- (-2.5, -5);
	 				\draw[, thick] (0, -4) -- (-1.732, -5);
	 				\draw[, thick] (0, -4) -- (1.732, -5);
	 				\filldraw  (1.732, -5) circle [radius = 0.1] node[above right = 1 mm] {$\begin{pmatrix}1 & 0 \\ 1 & 4\end{pmatrix}$};
	 				\draw[, thick] (1.732, -5) -- (2.1, -5.7);
	 				\draw[, thick] (1.732, -5) -- (2.5, -5.1);
	 				\filldraw[] (3.464, 2)  circle [radius=0.1] node[below = 1.5mm] {$\begin{pmatrix}1 & 0 \\ 0 & 2\end{pmatrix}$};
	 				\filldraw[] (-3.464, 2) circle [radius=0.1] node[below = 3mm] {$\begin{pmatrix}2 & 0 \\ 0 & 1\end{pmatrix}$};
	 				\filldraw[] (3.464, 4) circle [radius = 0.1] node[right = 1.5mm]{$\begin{pmatrix}1 & 0 \\ 2 &  4\end{pmatrix}$};
	 				\draw[, thick] (3.464, 2) -- (3.464, 4);
	 				\filldraw (5.2, 1) circle [radius = 0.1]      node[below = 1.5mm ] {$\begin{pmatrix}1 & 0 \\ 0 & 4\end{pmatrix}$};
	 				\draw[, thick] (3.464, 2) -- (5.2, 1);
	 				\draw[, thick] (3.464, 4) -- (3,4.7);
	 				\draw[, thick](3.464, 4) -- (3.9, 4.7);
	 				\draw[, thick] (5.2, 1) -- (5.9, 0.6);
	 				\draw[, thick] (5.2, 1) -- (5.9, 1.4);
	 				\filldraw[] (-3.464, 4) circle [radius = 0.1] node[right = 1.5mm]{$\begin{pmatrix}4 & 0 \\ 0 & 1\end{pmatrix}$};
	 				\draw[, thick] (-3.464, 2) -- (-3.464, 4);
	 				\filldraw (-5.2, 1) circle [radius = 0.1] node[below = 1.5mm]{$\begin{pmatrix}2 & 0 \\ 1 & 2\end{pmatrix}$};
	 				\draw[, thick] (-3.464, 2) -- (-5.2, 1);
	 				\draw[, thick] (-3.464, 4) -- (-3,4.7);
	 				\draw[, thick] (-3.464, 4) -- (-3.9, 4.7);
	 				\draw[, thick] (-5.2, 1) -- (-5.9, 0.6);
	 				\draw[, thick] (-5.2, 1) -- (-5.9, 1.4);
	 		\end{tikzpicture}}
	 		\caption{The building $\Bfrak_2$ over $K = \mathbb Q_2$.}
	 		\label{fig:buildingb2}
	 	\end{figure}
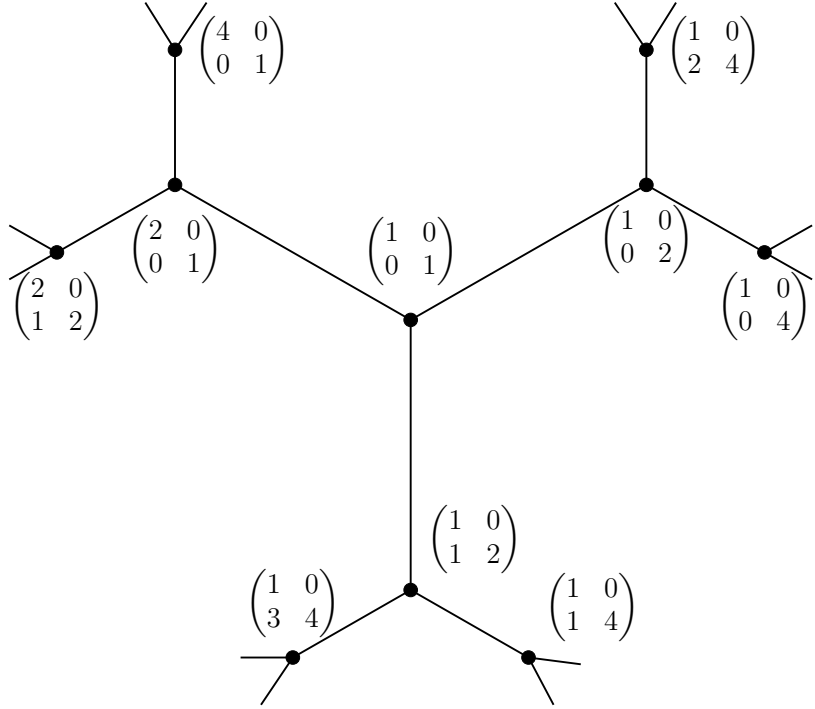
	 	
	 \end{example}

	 \subsection{Arithmetic foundations}
	 \label{subsec:arithmfound}
	 To define Mustafin varieties, we begin by defining the projective space associated to a lattice. Let $\Lambda$ be a lattice in a finite dimensional vector space $V$ over $K$. Let $V^{\vee} = \Hom_{K}(V,K)$ be the \emph{dual space} of $V$ and $\Lambda^{\vee} = \Hom_{\Ocal_K}(\Lambda, \Ocal_K)$ the \emph{dual lattice} of $\Lambda$. We denote by $\Sym(V^{\vee})$ and $\Sym(\Lambda^{\vee})$ their corresponding symmetric algebras. We define the projective spaces of $V$ and $\Lambda$ as the $\Proj$ constructions of the symmetric algebras $\Sym(V^{\vee})$ and $\Sym(\Lambda^{\vee})$, in other words,
	 \begin{align}
	 	&\mathbb{P}(V) = \mathrm{Proj}(\mathrm{Sym}(V^{\vee})), \\
	 	&\mathbb{P}(\Lambda) = \mathrm{Proj}( \Sym(\Lambda^{\vee})).
	 \end{align}
	 For readers that are more familiar with polynomial rings, we give another description of these spaces. Let us fix an $\Ocal_K$-basis $(e_1, \dots, e_d)$ of $\Lambda$ which is then also a $K$-basis of $V$. The dual basis $(e_1^\vee, \dots, e_d^\vee)$ is a basis of $V^{\vee}$ and by sending $e_i^\vee$ to the variable $x_i$, the symmetric algebra $\Sym(V^\vee)$ is isomorphic to the polynomial ring $K[x_1,\dots, x_d]$. So we can also define the projective space $\mathbb{P}(V)$ as $\Proj(K[x_1,\dots, x_d])$, i.e.\ the set of homogeneous ideals of $K[x_1, \dots, x_d]$ not containing the maximal ideal generated by $x_1, \dots, x_d$. We write
	 \[
	 \mathbb{P}(V) = \mathrm{Proj}(\mathrm{Sym}(V^\vee)) \cong \Proj(K[x_1, \dots, x_d]).
	 \]	
	 Similarly, the dual basis $(e_1^\vee, \dots, e_d^\vee)$ is a basis of $\Lambda^\vee$, and sending $e_i^\vee$ to the variable $x_i$ we can see that the symmetric algebra $\Sym(\Lambda^\vee)$ is isomorphic to the polynomial ring $\Ocal_K[x_1,\dots, x_d]$. So we have:
	 \[	
	 \mathbb{P}(\Lambda)  = \mathrm{Proj}( \mathrm{Sym}( \Lambda^\vee )) \cong \Proj(\Ocal_K[x_1,\dots, x_d]). 
	 \]
	 This is the scheme of homogeneous ideals in $\Ocal_K[x_1,\dots, x_d]$ that do not contain the ideal generated (in $\Ocal_K[x_1, \dots, x_d]$) by $x_1, \dots, x_d$. We warn the reader that, unlike the case of $K[x_1,\dots, x_d]$, this ideal is no longer maximal.\\
	 \noindent The natural embedding of rings $i : \Ocal_K \hookrightarrow \Sym(\Lambda^\vee) = \Ocal_K[x_1,\dots, x_d]$, induces a morphism of schemes
	 \[
	 	\label{equ:morph}
	 	\gamma\colon\mathbb{P}(\Lambda)  \longrightarrow \mathrm{Spec}(\mathcal{O}_K) = \{(0), 		 \mfrak_K\},\quad \pfrak \longmapsto i^{-1}(\pfrak).
	 \]
	 Hence, the scheme $\mathbb{P}(\Lambda)$ is a projective scheme over $\mathrm{Spec}(\mathcal{O}_K)=\{(0),\mathfrak{m}_K\}$. The zero ideal in $\Spec(\Ocal_K)$ is called the \textit{generic point} of $\mathrm{Spec}(\mathcal{O}_K)$, while $\mathfrak{m}_K$ is called its \textit{special point}. This is because $\mathfrak{m}_K$ lies in the closure of $\{(0)\}$ with respect to the Zariski topology. Moreover, we note that the ideal $(0)$ naturally corresponds to the field $K$, while the ideal $\mathfrak{m}_K$ naturally corresponds to the residue field $\overline{\K}$. Indeed, the natural maps $\mathcal{O}_K \to K$ and $\mathcal{O}_K\to \overline{\K}$ induce the immersions $\mathrm{Spec}(K)\to\mathrm{Spec}(\mathcal{O}_K)$, whose image is $\{(0)\}$, and $\mathrm{Spec}(\overline{\K})\to\mathrm{Spec}(\mathcal{O}_K)$, whose image is $\{\mathfrak{m}_K\}$.
	 
	 \noindent
	 The following lemma is a well-known fact that follows from base change techniques, e.g. from \cite[Prop.~3.1.9]{zbMATH05048200}.

	 \begin{lemma}
	 	The fiber over the generic point in \cref{equ:morph} is canonically isomorphic to $\mathbb{P}(V)$, while the fiber over the special point is canonically isomorphic to $\mathbb{P}_{\overline{\K}}^{d-1}$.
	 \end{lemma}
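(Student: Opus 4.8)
The plan is to realise each fiber of $\gamma$ as a base change of $\mathbb{P}(\Lambda)$ along the inclusion of a residue field of $\Spec(\Ocal_K)$, and then to invoke the compatibility of the $\Proj$ construction with such base changes. Recall that, for a point $\pfrak\in\Spec(\Ocal_K)$ with residue field $\kappa(\pfrak)$, the fiber of $\gamma$ (from \cref{equ:morph}) over $\pfrak$ is by definition the scheme $\mathbb{P}(\Lambda)\times_{\Spec(\Ocal_K)}\Spec(\kappa(\pfrak))$. Viewing $\Sym(\Lambda^\vee)$ as a graded ring with degree-zero part $\Ocal_K$, the base-change statement cited in the lemma (\cite[Prop.~3.1.9]{zbMATH05048200}) provides a canonical isomorphism
\[
\mathbb{P}(\Lambda)\times_{\Spec(\Ocal_K)}\Spec(\kappa(\pfrak)) \;\cong\; \Proj\bigl(\Sym(\Lambda^\vee)\otimes_{\Ocal_K}\kappa(\pfrak)\bigr),
\]
and, since formation of symmetric algebras commutes with base change, the right-hand side equals $\Proj\bigl(\Sym_{\kappa(\pfrak)}(\Lambda^\vee\otimes_{\Ocal_K}\kappa(\pfrak))\bigr)$. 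So the whole argument reduces to identifying the $\kappa(\pfrak)$-vector space $\Lambda^\vee\otimes_{\Ocal_K}\kappa(\pfrak)$ in the two relevant cases.

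For the generic point $\pfrak=(0)$ one has $\kappa((0))=\Frac(\Ocal_K)=K$; since $\Lambda$ is a lattice, i.e.\ a free $\Ocal_K$-module of rank $d$ with $\Lambda\otimes_{\Ocal_K}K\cong V$, duality of finite free modules commutes with extension of scalars, so $\Lambda^\vee\otimes_{\Ocal_K}K\cong\Hom_K(V,K)=V^\vee$ canonically, and therefore
\[
\gamma^{-1}((0)) \;\cong\; \Proj\bigl(\Sym_K(\Lambda^\vee\otimes_{\Ocal_K}K)\bigr) \;\cong\; \Proj(\Sym(V^\vee)) \;=\; \mathbb{P}(V).
\]
For the special point $\pfrak=\mfrak_K$ one has $\kappa(\mfrak_K)=\Ocal_K/\mfrak_K=\overline{\K}$; writing $\overline\Lambda=\Lambda/\mfrak_K\Lambda$, the same reasoning gives $\Lambda^\vee\otimes_{\Ocal_K}\overline{\K}\cong\Hom_{\overline{\K}}(\overline\Lambda,\overline{\K})$, so that $\gamma^{-1}(\mfrak_K)$ is canonically $\Proj\bigl(\Sym_{\overline{\K}}(\Hom_{\overline{\K}}(\overline\Lambda,\overline{\K}))\bigr)$; since $\dim_{\overline{\K}}\overline\Lambda=d$, any choice of $\overline{\K}$-basis identifies this with $\Proj(\overline{\K}[x_1,\dots,x_d])=\mathbb{P}^{d-1}_{\overline{\K}}$, with the $\PGL_d(\overline{\K})$-ambiguity entering only through the choice of coordinates, exactly as for $\mathbb{P}(V)$ itself.

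I do not expect a genuine obstacle here: the proof is a routine unwinding of base change. The main things to be careful about are, first, that $\Proj$ commutes with base change of the degree-zero ring even though the irrelevant ideal $(x_1,\dots,x_d)\subseteq\Ocal_K[x_1,\dots,x_d]$ fails to be maximal — this is already built into the EGA-type statement, which does not require maximality of the irrelevant ideal — and second, that the identifications of $\Lambda^\vee\otimes_{\Ocal_K}\kappa(\pfrak)$ with the relevant dual spaces are the canonical ones, so that the resulting isomorphisms of fibers are independent of auxiliary choices; both are immediate from $\Lambda$ being finitely generated free over $\Ocal_K$.
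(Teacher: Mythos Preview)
Your argument is correct and is precisely the base-change computation the paper points to: the paper does not spell out a proof but simply cites \cite[Prop.~3.1.9]{zbMATH05048200}, and what you have written is exactly the unwinding of that reference. The only mild difference is that, immediately after the lemma, the paper supplements this with a hands-on description aimed at readers less comfortable with schemes, exhibiting the bijection between homogeneous primes of $\Sym(V^\vee)$ and those of $\Sym(\Lambda^\vee)$ not containing $\pi$, and similarly for the special fiber via the surjection $\Sym(\Lambda^\vee)\twoheadrightarrow\Sym(\Hom_{\overline{\K}}(\Lambda/\pi\Lambda,\overline{\K}))$; this is not a different proof so much as a concrete reformulation of the same base-change identification you carry out abstractly.
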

	 
	 \noindent
	 For readers unfamiliar with such notions, we give an indication as to how $\mathbb{P}(V)$ embeds into $\mathbb{P}(\Lambda)$ as its \textit{generic fiber} $\mathbb{P}(\Lambda)_K$, i.e. the preimage of $\{(0)\}$ under the morphism of schemes $\gamma$ in (\ref{equ:morph}). For this, we recall that the morphism $\gamma$ is induced by the natural injection 
	 \begin{equation}
	 	i \colon \mathcal{O}_K \hookrightarrow \mathrm{Sym}(\Lambda^\vee).
	 \end{equation}
	 Let $S_+^{\Ocal_K} \subset \Ocal_K[x_1, \dots, x_d]$ be the ideal generated by $x_1,\dots,x_d$. Then, the topological space underlying $\mathbb{P}(\Lambda)$ consists of all homogeneous prime ideals of $\mathrm{Sym}(\Lambda^\vee)$ that do not contain $S_+^{\Ocal_K}$. Similarly, let $S_+^K \subset K[x_1,\dots,x_d]$ be the ideal generated by $x_1, \dots, x_d$. Then, the scheme $\mathbb{P}(V)$ consists of all homogeneous prime ideals of $\mathrm{Sym}(V^\vee) \cong K[x_1,\dots,x_d]$ that do not contain $S_+^K$. Let $\mathfrak{p}\in\mathbb{P}(\Lambda)$. Since by definition we have $\gamma(\mathfrak{p})=i^{-1}(\mathfrak{p})$, we obtain that $\gamma(\mathfrak{p})=(0)$ if and only if we have $\pi\notin\mathfrak{p}$. Similarly, we obtain that $\gamma(\mathfrak{p})=\mathfrak{m}_K$ if and only if we have $\pi\in\mathfrak{p}$. To show that $\mathbb{P}(V)$ embeds into $\mathbb{P}(\Lambda)$, we consider the following injection
	 \[
	 j \colon \mathrm{Sym}(\Lambda^\vee) \hookrightarrow \mathrm{Sym}(V^\vee).
	 \]
	 It is enough to show that $j$ induces a bijection
	 \begin{equation}
	 	\begin{tikzcd}
	 		\phi\colon \mathbb{P}(V) = 
	 		\begin{cases}
	 			\begin{rcases}
	 				\mathfrak{p} \subseteq\mathrm{Sym}(V^\vee)\\
	 				\textrm{homogeneous}\\ \textrm{prime and}\, S_+^K\not\subseteq\mathfrak{p}
	 			\end{rcases}
	 		\end{cases}  \arrow["{\mathfrak{p}\mapsto j^{-1}(\mathfrak{p})}"]{rrr}
	 		& & &
	 		\begin{cases}
	 			\begin{rcases}
	 				\mathfrak{q} \subseteq\mathrm{Sym}(\Lambda^\vee)\\
	 				\textrm{homogeneous prime }\\ 
	 				\mathrm{and } \ \pi \not\in \qfrak ,\textrm{and}\, S_+^{\mathcal{O}_K}\not\subseteq\mathfrak{q}
	 			\end{rcases}
	 		\end{cases}
	 	\end{tikzcd}
	 \end{equation}
	 It is not so hard to see that $\phi$ is indeed a bijection by observing that 
	 \begin{equation}
	     j^{-1}(\mathfrak{p}) = j^{-1}(\mathfrak{p} \cap j(\Sym(\Lambda^\vee)))
	 \end{equation}
	 and the fact that 
	 \begin{equation}
	  \pi \in j^{-1}(\mathfrak{p} \cap j(\Sym(\Lambda^\vee))
	 \end{equation}
	 implies $\mathfrak{p} = \mathrm{Sym}(V^\vee)$.   
	 Similarly, one may see from the natural surjection
	 \begin{equation}
	 \begin{tikzcd}
	   	s\colon\mathrm{Sym}(\mathrm{Hom}_{\mathcal{O}_K}(\Lambda,\mathcal{O}_K)) \arrow[two heads]{rrr}& & & \mathrm{Sym}\left(\mathrm{Hom}_{\overline{\K}}\left(\faktor{\Lambda}{\pi \Lambda},\overline{\K}\right) \right)  
	 \end{tikzcd}
	 \end{equation}
	 that the special fiber $\mathbb{P}(\Lambda)_{\overline{\K}} = \gamma^{-1}(\{ \mfrak_K \})$ of $\mathbb{P}(\Lambda)$ is isomorphic to the projective space $\mathbb{P}\left(\faktor{\Lambda}{\pi \Lambda}\right)\cong\mathbb{P}_{\overline{\K}}^{d-1}$ over $\mathrm{Spec}(\overline{\K})$. We summarize our discussion in \cref{Fig:projLattice}.
	 
	 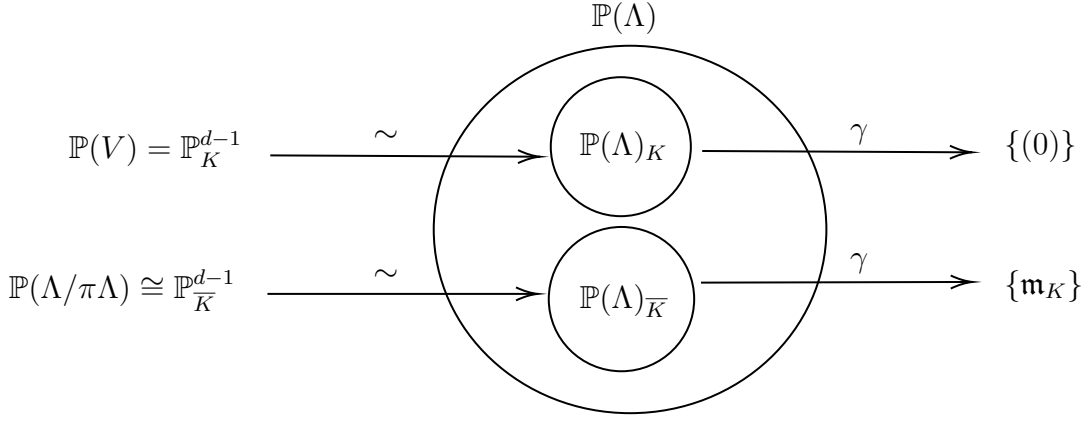
\begin{figure}
	 
	 	\begin{center}
	 		\tikzset{every picture/.style={line width=0.75pt}}
	 		
	 		\begin{tikzpicture}[x=0.75pt,y=0.75pt,yscale=-1,xscale=1]
	 			
	 			\draw   (243.58,241.26) .. controls (243.58,190.22) and (287.46,148.85) .. (341.58,148.85) .. controls (395.71,148.85) and (439.58,190.22) .. (439.58,241.26) .. controls (439.58,292.29) and (395.71,333.67) .. (341.58,333.67) .. controls (287.46,333.67) and (243.58,292.29) .. (243.58,241.26) -- cycle ;
	 			
	 			\draw   (302,199.62) .. controls (302,180.32) and (317.65,164.67) .. (336.96,164.67) .. controls (356.27,164.67) and (371.92,180.32) .. (371.92,199.62) .. controls (371.92,218.93) and (356.27,234.58) .. (336.96,234.58) .. controls (317.65,234.58) and (302,218.93) .. (302,199.62) -- cycle ;
	 			
	 			\draw   (301,276.29) .. controls (301,256.25) and (317.25,240) .. (337.29,240) .. controls (357.33,240) and (373.58,256.25) .. (373.58,276.29) .. controls (373.58,296.33) and (357.33,312.58) .. (337.29,312.58) .. controls (317.25,312.58) and (301,296.33) .. (301,276.29) -- cycle ;
	 			
	 			\draw    (377.58,201.79) -- (510.17,202.45) ;
	 			\draw [shift={(512.17,202.46)}, rotate = 180.28] [color={rgb, 255:red, 0; green, 0; blue, 0 }  ][line width=0.75]    (10.93,-3.29) .. controls (6.95,-1.4) and (3.31,-0.3) .. (0,0) .. controls (3.31,0.3) and (6.95,1.4) .. (10.93,3.29)   ;
	 			
	 			\draw    (377,268) -- (396.58,267.95) -- (510.58,267.67) ;
	 			\draw [shift={(512.58,267.67)}, rotate = 539.86] [color={rgb, 255:red, 0; green, 0; blue, 0 }  ][line width=0.75]    (10.93,-3.29) .. controls (6.95,-1.4) and (3.31,-0.3) .. (0,0) .. controls (3.31,0.3) and (6.95,1.4) .. (10.93,3.29)   ;
	 			
	 			\draw    (162,204) -- (294.58,204.66) ;
	 			\draw [shift={(296.58,204.67)}, rotate = 180.28] [color={rgb, 255:red, 0; green, 0; blue, 0 }  ][line width=0.75]    (10.93,-3.29) .. controls (6.95,-1.4) and (3.31,-0.3) .. (0,0) .. controls (3.31,0.3) and (6.95,1.4) .. (10.93,3.29)   ;
	 			
	 			\draw    (160.58,273.67) -- (293.58,273.67) ;
	 			\draw [shift={(295.58,273.67)}, rotate = 180] [color={rgb, 255:red, 0; green, 0; blue, 0 }  ][line width=0.75]    (10.93,-3.29) .. controls (6.95,-1.4) and (3.31,-0.3) .. (0,0) .. controls (3.31,0.3) and (6.95,1.4) .. (10.93,3.29)   ;

	 			\draw (315,190.4) node [anchor=north west][inner sep=0.75pt]  [xscale=1,yscale=1]  {$\mathbb{P}( \Lambda )_{K}$};
	 			
	 			\draw (315,267.4) node [anchor=north west][inner sep=0.75pt]  [xscale=1,yscale=1]  {$\mathbb{P}( \Lambda )_{\overline{\K}}$};
	 			
	 			\draw (321,125.4) node [anchor=north west][inner sep=0.75pt]  [xscale=1,yscale=1]  {$\mathbb{P}( \Lambda )$};
	 			
	 			\draw (527,190) node [anchor=north west][inner sep=0.75pt]  [xscale=1,yscale=1]  {$\{(0)\}$};
	 			
	 			\draw (527,260) node [anchor=north west][inner sep=0.75pt]  [xscale=1,yscale=1]  {$\{\mathfrak{m}_K\}$};
	 			
	 			\draw (30,260) node [anchor=north west][inner sep=0.75pt]  [xscale=1,yscale=1]  {$\mathbb{P}(\Lambda / \pi \Lambda) \cong \mathbb{P}_{\overline{\K}}^{d-1}$};
	 			
	 			\draw (60,190) node [anchor=north west][inner sep=0.75pt]  [xscale=1,yscale=1]  {$\mathbb{P}(V)$};
	 			
	 			\draw (212,190) node [anchor=north west][inner sep=0.75pt]  [xscale=1,yscale=1]  {$\sim $};
	 			
	 			\draw (212,260) node [anchor=north west][inner sep=0.75pt]  [xscale=1,yscale=1]  {$\sim $};
	 			
	 			\draw (450,187) node [anchor=north west][inner sep=0.75pt]  [xscale=1,yscale=1]  {$\gamma$};
	 			
	 			\draw (450,252) node [anchor=north west][inner sep=0.75pt]  [xscale=1,yscale=1]  {$\gamma$};

	 		\end{tikzpicture}
	 		\caption{Components of $\mathbb{P}(\Lambda)$}
	 		\label{Fig:projLattice}
	 	\end{center}

	 \end{figure}

	 \subsection{Images of rational maps} \label{imagesOfRationalMaps}
	 \label{pre-rat}
	 In \cite{li2018images}, a combinatorial framework for the study of images of rational maps was developed. More precisely, let $F$ be an infinite field\footnote{The results in \cite{li2018images} are stated for algebraically closed fields, but hold for any infinite field $F$.}, and fix two positive integers $n,d$ and non-zero matrices $g_1,\dots,g_n\in\mathrm{Mat}_{d}(F)$. We define the rational map
	 \begin{equation}
	 	\begin{tikzcd}
	 		\underline{g}:\mathbb{P}_F^{d-1}\arrow[dashed,"{(g_1,\dots,g_n)}"]{rrr}& & & \left(\mathbb{P}_F^{d-1}\right)^{n}
	 	\end{tikzcd}
	 \end{equation}
	 by sending a column vector $x = (x_1: \dots : x_d)^\top \in \mathbb{P}_F^{d-1}$ to $\underline{g}(x) = (g_1 x , \dots, g_n x ) \in (\mathbb{P}_F^{d-1})^n$. This map is well defined on a Zariski dense set in $\mathbb{P}_F^{d-1}$) (hence the name rational map). We then define $X(\underline{g})$ to be the Zariski closure of the image of $\underline{g}$, i.e.
	 \begin{equation}
	 	X(\underline{g}) = \overline{\mathrm{Im}(\underline{g})}.
	 \end{equation}
	 For each $i=1, \dots, n$, we denote $V_i=\mathrm{ker}(g_i)$ and for $I \subseteq \{1,\dots,n\}$ we define
	 \[
	 d_I=\mathrm{dim}\bigcap_{i\in I} V_i.
	 \]
	 For any integer $h \geq 0$, set
	 
	 \begin{equation}
	 	M(h)=\left\{\underline{m}\in\mathbb{N}^n \colon \sum_{i = 1}^{n} m_i=h\,\textrm{and}\, \textrm{for all}\,I\subseteq\{1,\dots,n\}, \sum_{i\in I} m_i<d-d_I\ \right\}.
	 \end{equation}
	 The following result on the varieties $X(\underline{g})$ was derived in \cite{li2018images}.
	 
	 \begin{theorem}[\cite{li2018images}]
	 	Let $n,d,g_1,\dots,g_n,M(h)$ as above and set
	 	\begin{equation}
	 	d_{\underline{g},n}=\mathrm{max}\{h \colon M(h)\neq\emptyset\}. 	 
	 	\end{equation}
    Then, we have $\mathrm{dim}(X(\underline{g}))=d_{\underline{g},n}$.
	 \end{theorem}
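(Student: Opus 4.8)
The plan is to compute $\dim X(\underline{g})$ through generic multilinear sections, i.e.\ through the multidegree of $X(\underline{g})$ inside $(\mathbb{P}_F^{d-1})^n$. First note that $X(\underline{g})$ is irreducible, being the closure of the image of the irreducible variety $\mathbb{P}_F^{d-1}$ under a rational map. I would then invoke the standard fact that for an irreducible subvariety $X\subseteq(\mathbb{P}_F^{d-1})^n$ one has
\[
\dim X = \max\left\{\ \sum_{i=1}^{n} m_i \ :\ \underline{m}\in\N^n \text{ and } X\cap(L_1\times\cdots\times L_n)\neq\emptyset \text{ for generic linear } L_i\subseteq\mathbb{P}_F^{d-1} \text{ with } \mathrm{codim}(L_i)=m_i\ \right\},
\]
which follows from the usual incidence variety argument: the incidence $\Gamma=\{(p,(L_i)_i): p\in X,\ p_i\in L_i\}$ has $\dim\Gamma=\dim X+\dim\prod_i\Gr(d-1-m_i,\mathbb{P}_F^{d-1})-\sum_i m_i$, a generic section $X\cap(L_1\times\cdots\times L_n)$ is nonempty exactly when the projection $\Gamma\to\prod_i\Gr(d-1-m_i,\mathbb{P}_F^{d-1})$ is dominant, and some top multidegree of $X$ is nonzero. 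Thus the theorem reduces to the combinatorial claim: \emph{the generic section of $X(\underline{g})$ of multicodimension $\underline{m}$ is nonempty if and only if $\sum_{i\in I}m_i<d-d_I$ for every nonempty $I\subseteq\{1,\dots,n\}$}; the statement then follows by maximising $\sum_i m_i$.

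For the ``only if'' direction (which gives $\dim X(\underline{g})\le d_{\underline{g},n}$), fix a nonempty $I$ and recall that the projection of $X(\underline{g})$ to $(\mathbb{P}_F^{d-1})^I$ is $X(\underline{g}_I)$, where $\underline{g}_I=(g_i)_{i\in I}$. Since $g_i x$ depends only on $x\bmod V_i$, the tuple $(g_i x)_{i\in I}$ depends only on $x\bmod\bigcap_{i\in I}V_i$, so the rational map $\underline{g}_I$ factors through the linear projection onto $\mathbb{P}(F^d/\bigcap_{i\in I}V_i)$; hence $\dim X(\underline{g}_I)\le d-1-d_I$. If $\sum_{i\in I}m_i\ge d-d_I$ for some $I$, then a generic multilinear section of $X(\underline{g}_I)$---and therefore of $X(\underline{g})$---is empty, which proves the contrapositive.

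For the ``if'' direction (which gives $\dim X(\underline{g})\ge d_{\underline{g},n}$), fix $\underline{m}$ satisfying all the inequalities and generic linear $L_i$ with $\mathrm{codim}(L_i)=m_i$; let $\widehat L_i\subseteq F^d$ be the affine cone and put $W=\bigcap_i g_i^{-1}(\widehat L_i)$. Each $g_i^{-1}(\widehat L_i)$ has codimension at most $m_i$, so the case $I=\{1,\dots,n\}$ gives $\dim W\ge d-\sum_i m_i\ge 1$. The key point is that for generic $L_i$ one has $W\not\subseteq V_j$ for every $j$: granting this, $W$ (a linear subspace, hence, $F$ being infinite, not contained in the finite union $\bigcup_j V_j$) contains a vector $x$ with all $g_i x\neq 0$, so $\underline{g}(x)\in X(\underline{g})\cap(L_1\times\cdots\times L_n)$ and the section is nonempty, giving $\dim X(\underline{g})\ge\sum_i m_i$; applied to an $\underline{m}$ realising $d_{\underline{g},n}$ this is the desired bound. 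To prove $W\not\subseteq V_j$ one uses the identity $g_j(W)=g_j(U)\cap\widehat L_j$ with $U=\bigcap_{i\neq j}g_i^{-1}(\widehat L_i)$, so that $W\subseteq V_j$ is equivalent to $\dim g_j(U)\le m_j$, i.e.\ to $\dim U-\dim(U\cap V_j)\le m_j$, and one must show this fails for generic $L_i$. This last estimate is the main obstacle: the crude bounds $\dim U\ge d-\sum_{i\neq j}m_i$ and $\dim(U\cap V_j)\le d_{\{j\}}$ do not suffice, and one genuinely needs the sharper generic value of $\dim(U\cap V_j)$, which is governed by the $d_I$ with $I\ni j$; controlling it forces an induction on $n$ tracking the generic dimensions of the iterated preimages $\bigcap_{i\in S}g_i^{-1}(\widehat L_i)$ and of their intersections with the subspaces $V_j$, which is precisely the bookkeeping carried out in \cite{li2018images}. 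A secondary subtlety, which the argument above circumvents by exhibiting an honest image point, is that a priori a point of the generic section might lie in the boundary $X(\underline{g})\setminus\mathrm{Im}(\underline{g})$, so a genericity input is needed to ensure that an image point is available whenever the dimension count permits one.
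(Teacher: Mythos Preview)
The paper does not supply a proof of this theorem at all: it is quoted verbatim from \cite{li2018images} and used as a black box. There is therefore no ``paper's own proof'' to compare against.

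As for your sketch on its own terms: the overall strategy---compute $\dim X(\underline{g})$ via generic multilinear sections and reduce to the combinatorial characterisation of when such a section is nonempty---is sound, and your ``only if'' direction is correct and cleanly stated. In the ``if'' direction your set-up is right (the identity $g_j(W)=g_j(U)\cap\widehat L_j$ does hold, and the reduction to $\dim U-\dim(U\cap V_j)>m_j$ is accurate), but, as you yourself say, the crude bounds fail and the honest work lies in the inductive control of $\dim(U\cap V_j)$ for generic $L_i$. You do not carry this out; you defer it back to \cite{li2018images}. So what you have written is a correct roadmap with the central lemma outsourced, not a self-contained proof. If the intent is merely to indicate why the formula is plausible, this is fine; if the intent is to \emph{prove} the theorem, the gap is exactly the one you have flagged, and it is the substantive part of the argument.
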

	 
	 \subsection{Mustafin varieties}
	 \label{sec:mustafin}

	 	Let $\Gamma=\{[\Lambda_1],\dots,[\Lambda_n]\}$ be a set of homothety classes of lattices. We consider the open immersions
	 	\begin{equation}
	 		f_i\colon\mathbb{P}(V)\to\mathbb{P}(\Lambda_i), \ i=1,\dots,n
	 	\end{equation}
	 	mapping to the generic fiber and define the map
	 	\begin{equation}
	 		\underline{f}\colon\mathbb{P}(V) \xrightarrow[]{(f_1,\dots,f_n)} \mathbb{P}(\Lambda_1)\times\dots \times \mathbb{P}(\Lambda_n).
	 	\end{equation}
	 	
	 	\begin{definition}
	 	The \emph{Mustafin variety} $\Mcal(\Gamma)$ associated to $\Gamma$ is the Zariski closure of the image of $\underline{f}=(f_1,\dots,f_n)$ endowed with the reduced scheme structure.
	 	\end{definition}
	 	
	 \begin{theorem}[\cite{MR2861606}]
	 	The Mustafin variety $\mathcal{M}(\Gamma)$ is a flat, projective and proper scheme over $\mathrm{Spec}(\mathcal{O}_K)$. Also, the generic fiber $\mathcal{M}(\Gamma)_K$ is isomorphic to $\mathbb{P}(V)$. The special fiber $\mathcal{M}(\Gamma)_{\overline{\K}}$ is reduced and hence is a variety.
	 \end{theorem}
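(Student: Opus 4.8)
\noindent
The plan is to dispatch the first three assertions from the construction and to locate the real difficulty in the reducedness of the special fibre. \emph{Projectivity and properness.} Each $\mathbb{P}(\Lambda_i)$ is projective over $\Spec(\mO_\K)$, so by the Segre embedding the product $\mathbb{P}(\Lambda_1)\times_{\mO_\K}\cdots\times_{\mO_\K}\mathbb{P}(\Lambda_n)$ is projective over $\Spec(\mO_\K)$ as well; since $\Mcal(\Gamma)$ is by definition a closed subscheme of this product, it is projective, hence proper, over $\Spec(\mO_\K)$. \emph{The generic fibre.} Fixing $\mO_\K$-bases of the $\Lambda_i$ identifies each $\mathbb{P}(\Lambda_i)$ with $\mathbb{P}^{d-1}_{\mO_\K}$ in such a way that $f_i$ becomes multiplication by an invertible matrix $g_i\in\GL_d(\K)$; thus over the generic point $\underline f$ is the map $\mathbb{P}(V)\to(\mathbb{P}^{d-1}_\K)^n$, $x\mapsto(g_1x,\dots,g_nx)$, which factors as the diagonal $\mathbb{P}(V)\hookrightarrow(\mathbb{P}^{d-1}_\K)^n$ followed by the automorphism $(g_1,\dots,g_n)$ and is therefore a closed immersion with image isomorphic to $\mathbb{P}(V)$. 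As this image is already closed in the generic fibre of the product, it coincides with $\Mcal(\Gamma)_\K$, so $\Mcal(\Gamma)_\K\cong\mathbb{P}(V)$.

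\emph{Flatness.} First I would note that $\Mcal(\Gamma)$ is integral. Indeed $\mathbb{P}(V)$ is reduced and $\underline f$ is quasi-compact and quasi-separated, so $\underline f_\ast\mathcal O_{\mathbb{P}(V)}$ is reduced, whence the scheme-theoretic image of $\underline f$ is reduced; being also irreducible (it is the closure of the irreducible set $\underline f(\mathbb{P}(V))$), it is integral, and by construction it is $\Mcal(\Gamma)$. Its structure morphism to $\Spec(\mO_\K)$ is dominant because the generic fibre is nonempty by the previous paragraph. Hence on every affine open $\Spec(A)\subseteq\Mcal(\Gamma)$ the ring $A$ is a domain with $\pi\neq0$ in $A$, so $A$ is $\pi$-torsion free; over the discrete valuation ring $\mO_\K$ this is equivalent to $A$ being flat, and therefore $\Mcal(\Gamma)$ is flat over $\Spec(\mO_\K)$. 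In particular $\pi$ is a nonzerodivisor on $\Mcal(\Gamma)$, the special fibre $\Mcal(\Gamma)_{\overline{\K}}=V(\pi)$ is an effective Cartier divisor, and flatness together with properness forces $\Mcal(\Gamma)_{\overline{\K}}$ to be nonempty with the same Hilbert polynomial as the generic fibre $\mathbb{P}(V)$.

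\emph{Reducedness of the special fibre.} This is the main obstacle: it does not follow formally from the previous points, since a reduced scheme that is flat over $\mO_\K$ can have a non-reduced closed fibre (for instance $\Spec(\mO_\K[t]/(t^2-\pi))$). I would check reducedness locally. Covering $\prod_i\mathbb{P}(\Lambda_i)$ by products of standard affine charts, one describes $\Mcal(\Gamma)$ on each chart as the closure of the graph of the map $x\mapsto(g_1x,\dots,g_nx)$, and writes down its ideal by elimination; the goal is to show that, after reduction modulo $\pi$, this ideal degenerates (via a suitable Gröbner degeneration) to a squarefree monomial ideal, which would make $\Mcal(\Gamma)_{\overline{\K}}$ reduced on that chart. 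The combinatorics of such a degeneration is governed by the convex hull of $\Gamma$ in the Bruhat--Tits building $\Bfrak_d$: one treats first the case where $\Gamma$ is a chain of pairwise adjacent lattice classes --- where $\Mcal(\Gamma)$ is obtained from $\mathbb{P}(\Lambda_1)$ by a sequence of blow-ups along reduced centres supported on the special fibre, so that one verifies inductively that the closed fibre stays reduced --- and then reduces the general case to the chains appearing in $\mathrm{conv}(\Gamma)$. Controlling the scheme structure of $\Mcal(\Gamma)_{\overline{\K}}$ uniformly in $\Gamma$ is the delicate part of the argument; the full details are in \cite{MR2861606}, and the resulting combinatorial description of the special fibre underlies the classification of \cite{hahn2020mustafin}. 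Granting reducedness, $\Mcal(\Gamma)_{\overline{\K}}$ is a reduced scheme of finite type over the field $\overline{\K}$, that is, a variety.
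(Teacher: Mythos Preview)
The paper does not prove this theorem at all: it is quoted verbatim from \cite{MR2861606} with no argument supplied, so there is no ``paper's own proof'' to compare against. Your write-up is therefore not reproducing but \emph{supplying} a proof where the authors chose to cite.

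On the substance: your treatment of projectivity/properness, of the generic fibre, and of flatness is correct and is exactly the standard argument (integral scheme dominating a DVR is torsion-free, hence flat; the definition of $\Mcal(\Gamma)$ already imposes the reduced structure, so integrality is immediate). These parts would be accepted without change.

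Where your sketch diverges from the source is the reducedness of the special fibre. In \cite{MR2861606} the argument is not a Gr\"obner degeneration to a squarefree monomial ideal, nor a blow-up induction along chains. Rather, one first treats the case where $\Gamma$ is a \emph{simplex} (a set of pairwise adjacent classes), in which $\Mcal(\Gamma)$ is identified with a \emph{local model} in the sense of the theory of Shimura varieties; reducedness of the special fibre of such local models is a theorem of Faltings (and G\"ortz), invoked as a black box. The general convex case is then reduced to simplices via the natural projections $\Mcal(\Gamma)\to\Mcal(\Gamma')$ for $\Gamma'\subseteq\Gamma$, and the non-convex case is handled by comparing $\Mcal(\Gamma)$ with $\Mcal(\mathrm{conv}(\Gamma))$. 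Your proposed route (blow-ups along chains, initial-ideal squarefreeness) is plausible-sounding but is not the argument in the cited reference, and carrying it out rigorously would essentially require redoing the local-model analysis; so it is fair to say you have correctly located the difficulty but mis-described how \cite{MR2861606} resolves it.
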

	 
	 \noindent
	 The special fiber of Mustafin varieties may be studied in terms of the theory of convex hulls in Bruhat-Tits buildings.
	 
	 \begin{definition}
	 	The \textit{reduction complex} of $\mathcal{M}(\Gamma)$ is the simplicial complex where the vertices are the irreducible components of $\mathcal{M}(\Gamma)_{\overline{\K}}$ and where a set of vertices forms a simplex if and only if the intersection of the respective irreducible components is non-empty.
	 \end{definition}
	 
	 \begin{theorem}[\cite{MR2861606}]
	 	Let $\Gamma$ be a finite collection of lattice classes and assume that $\Gamma=\mathrm{conv}(\Gamma)$. Then, the reduction complex of $\mathcal{M}(\Gamma)$ is isomorphic to the simplicial complex $\mathrm{conv}(\Gamma)$.
	 \end{theorem}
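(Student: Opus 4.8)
The theorem packages two assertions: the irreducible components of the special fiber $\mathcal{M}(\Gamma)_{\overline{\K}}$ are in bijection with the vertices of $\mathrm{conv}(\Gamma)$, and two such components meet exactly when the corresponding lattice classes are adjacent in $\Bfrak_d$. Since $\Bfrak_d$ is a flag complex and $\Gamma=\mathrm{conv}(\Gamma)$ is a full subcomplex, these two facts give the claimed isomorphism of simplicial complexes. The plan is to exhibit the components explicitly as the varieties $X(\underline{g})$ of \cref{pre-rat}, and to read off their intersections by projecting onto the two-lattice Mustafin varieties. Throughout we use the already-stated structural facts: $\mathcal{M}(\Gamma)$ is flat and proper over $\mathrm{Spec}(\mathcal{O}_K)$, with generic fiber $\mathbb{P}(V)$ and reduced special fiber, so $\mathcal{M}(\Gamma)_{\overline{\K}}$ is a reduced closed subscheme of $\prod_{i}\mathbb{P}(\Lambda_i)_{\overline{\K}}\cong(\mathbb{P}^{d-1}_{\overline{\K}})^{n}$ that is pure of dimension $d-1$, being cut out by $\pi$ in the $d$-dimensional total space. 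As $\Gamma=\mathrm{conv}(\Gamma)$ is finite, after acting by $\mathrm{GL}_d(K)$ we may write $\Gamma=\{[\Lambda_1],\dots,[\Lambda_n]\}$ with $\Lambda_1=\mathcal{O}_K^{d}$, so that every class of the convex hull already occurs among the $[\Lambda_i]$.

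\textbf{The components.} Fix $[\Lambda]\in\Gamma$ and an $\mathcal{O}_K$-basis of $\Lambda$. For each $i$, let $\bar g_{\Lambda,i}\in\mathrm{Mat}_{d}(\overline{\K})$ be the reduction modulo $\mfrak_K$ of a matrix representing the birational map $\mathbb{P}(\Lambda)\to\mathbb{P}(\Lambda_i)$ in the chosen bases, cleared of denominators so as to have $\mathcal{O}_K$-entries not all divisible by $\pi$; set $\underline{g}_\Lambda=(\bar g_{\Lambda,1},\dots,\bar g_{\Lambda,n})$ and $P_\Lambda=X(\underline{g}_\Lambda)\subseteq(\mathbb{P}^{d-1}_{\overline{\K}})^{n}$. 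Tracing the reduction of a generic $K$-rational point of $\mathbb{P}(\Lambda)$ through $\underline{f}$ shows $P_\Lambda\subseteq\mathcal{M}(\Gamma)_{\overline{\K}}$, and $P_\Lambda$ is irreducible. Because $[\Lambda]\in\Gamma$, the index $i_0$ with $[\Lambda_{i_0}]=[\Lambda]$ makes $\bar g_{\Lambda,i_0}$ invertible; a short inspection of the sets $M(h)$ of \cref{pre-rat} then gives $\max\{h:M(h)\neq\emptyset\}=d-1$, so the dimension theorem for images of rational maps yields $\dim P_\Lambda=d-1$. Since $\mathcal{M}(\Gamma)_{\overline{\K}}$ is pure of dimension $d-1$, each $P_\Lambda$ is an irreducible component. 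Conversely, any closed point $y$ of $\mathcal{M}(\Gamma)_{\overline{\K}}$ lifts, by properness and density of the generic fiber, to an $\mathcal{O}_{K'}$-point over a finite extension $K'/K$, hence to a line $\ell\subseteq (K')^{d}$ together with its saturations $\ell\cap(\Lambda_i\otimes\mathcal{O}_{K'})$; comparing these saturations modulo $\pi$ shows $y\in P_\Lambda$ for some class $[\Lambda]\in\mathrm{conv}(\Gamma)=\Gamma$. Thus $\mathcal{M}(\Gamma)_{\overline{\K}}=\bigcup_{[\Lambda]\in\Gamma}P_\Lambda$ exhausts the components, and a local computation recovering $[\Lambda]$ from the generic point of $P_\Lambda$ makes $[\Lambda]\mapsto P_\Lambda$ a bijection onto the vertex set of the reduction complex.

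\textbf{The incidences.} It remains to prove, for $S\subseteq\Gamma$, that $\bigcap_{[\Lambda]\in S}P_\Lambda\neq\emptyset$ if and only if the classes of $S$ are pairwise adjacent in $\Bfrak_d$, equivalently $S$ is a face of $\mathrm{conv}(\Gamma)$. For ``only if'' it suffices, by the flag property, to treat two classes $[\Lambda],[\Lambda']$. The two relevant coordinate projections induce a morphism $\mathcal{M}(\Gamma)\to\mathcal{M}(\{[\Lambda],[\Lambda']\})$ which sends $P_\Lambda$ and $P_{\Lambda'}$ into the two end components of $\mathcal{M}(\{[\Lambda],[\Lambda']\})_{\overline{\K}}$. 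Computing the two-lattice Mustafin variety in a basis where $\Lambda'=\diag(\pi^{a_1},\dots,\pi^{a_d})\Lambda$ identifies $\mathcal{M}(\{[\Lambda],[\Lambda']\})_{\overline{\K}}$ with an explicit chain of products of projective spaces indexed by the lattice classes on the geodesic of $\Bfrak_d$ from $[\Lambda]$ to $[\Lambda']$; if these classes are not adjacent, an intermediate link of the chain separates the two ends, so they are disjoint and hence so are $P_\Lambda$ and $P_{\Lambda'}$. For ``if'', a pairwise-adjacent set $S$ is, up to homothety, a flag $\pi\Lambda_{\max}\subseteq\cdots\subseteq\Lambda_{\max}$ of lattices, equivalently a partial flag of subspaces of $\Lambda_{\max}/\pi\Lambda_{\max}$; picking a line $\ell\subseteq K^{d}$ in sufficiently general position with respect to this flag and following its reduction through every $\mathbb{P}(\Lambda)_{\overline{\K}}$ with $[\Lambda]\in S$ produces a single closed point of $\mathcal{M}(\Gamma)_{\overline{\K}}$ lying on all the $P_\Lambda$, $[\Lambda]\in S$, at once. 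Combining the two directions, the incidence complex of the $P_\Lambda$ is the flag complex of the adjacency relation on $\Gamma=\mathrm{conv}(\Gamma)$, namely $\mathrm{conv}(\Gamma)$ itself.

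The crux is the two-lattice computation behind the ``only if'' direction — identifying $\mathcal{M}(\{[\Lambda],[\Lambda']\})_{\overline{\K}}$ with the chain of projective-space products along the geodesic and, crucially, tracking which link of that chain receives $P_\Lambda$ under the projection from $\mathcal{M}(\Gamma)$ — together with the general-position argument in the ``if'' direction. Once these are settled, the passage from pairs to arbitrary faces, and from arbitrary finite $\Gamma$ to convex ones, is routine.
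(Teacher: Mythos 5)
The heart of this theorem is the incidence statement, and both halves of your treatment of it break down (the vertex classification you sketch is essentially \cref{thm-hlmust}; note that result, like the dimension formula of \cref{pre-rat}, is only stated for infinite residue fields, whereas the theorem you are proving carries no such hypothesis and is used in this paper over local fields). For the ``only if'' direction, your key claim --- that in the two-lattice Mustafin variety $\mathcal{M}(\{[\Lambda],[\Lambda']\})_{\overline{\K}}$ of two non-adjacent classes the two end components are disjoint --- is false. Take the example computed in \cref{sec:mustafin}: $\Lambda_1=\mathcal{O}_K^3$ and $\Lambda_2=\diag(1,\pi,\pi^2)\mathcal{O}_K^3$, at distance two in $\Bfrak_3$. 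The end components are $\mathbb{P}^2_{\overline{\K}}\times\{(0:0:1)\}$ and $\{(1:0:0)\}\times\mathbb{P}^2_{\overline{\K}}$, and they meet (together with the middle component) at the point $((1:0:0),(0:0:1))$. Consequently, knowing that $P_\Lambda$ and $P_{\Lambda'}$ project into the two end components yields no disjointness at all: the separation of non-adjacent components in the convex case comes precisely from the coordinates attached to the intermediate classes of $\mathrm{conv}(\Gamma)$, which your projection onto two factors throws away.

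The ``if'' direction is not salvageable as written either. You propose to exhibit a point of $\bigcap_{[\Lambda]\in S}P_\Lambda$ as the specialization of a single well-chosen $K$-rational line, but already in the smallest convex case of two adjacent classes, $\Lambda_1=\mathcal{O}_K^3\supseteq\Lambda_2=\diag(1,1,\pi)\mathcal{O}_K^3$, no $K$-line specializes into $P_{\Lambda_1}\cap P_{\Lambda_2}$: for a saturated spanning vector $u=(u_1,u_2,u_3)$, if $\val(u_3)=0$ the reduction in the $\Lambda_1$-factor has nonzero last coordinate, while if $\val(u_3)\geq 1$ the reduction in the $\Lambda_2$-factor cannot be $(0:0:1)$; either way the specialization misses the intersection $\{x_{31}=0\}\times\{(0:0:1)\}$, which lies only in the closure of the locus of such specializations (it is reached by lines defined over ramified extensions, whose relevant intermediate lattices live in a subdivision of $\Bfrak_d$ --- the same issue undermines your ``lift to $\mathcal{O}_{K'}$ and compare saturations'' step for exhausting the components). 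So nonemptiness of the intersections along faces of $\mathrm{conv}(\Gamma)$ cannot be certified by specializing rational points; one needs the actual equations of the components, or the local structure of $\mathcal{M}(\Gamma)$ for convex $\Gamma$. That is what the cited source supplies (the paper under review does not reprove the theorem but quotes \cite{MR2861606}): there the convex case is handled by identifying $\mathcal{M}(\Gamma)$ with a Deligne scheme and invoking Mustafin's and Faltings' description of its special fiber, which yields the component count and the intersection pattern simultaneously.
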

	 
	 \noindent
	 We now discuss how to compute the special fiber $\mathcal{M}(\Gamma)_{\overline{\K}}$. Let $\Gamma=\{[\Lambda_1],\dots,[\Lambda_n]\}$ be a finite set of homothety classes of lattices. We fix a \textit{reference lattice} $\Lambda$ and matrices $g_1,\dots,g_n\in\GL(V)$, such that $g_i\Lambda=\Lambda_i$.  Then, the following diagram is commutative 
	 
	 \begin{equation}
	 	\label{equ-comdiagram}
	 	\begin{tikzcd}
	 		\mathbb{P}(V) \arrow["{(g_1^{-1},\dots,g_n^{-1})}"]{rr}\arrow["{(f_1,\dots,f_n)}"]{dd}& & \mathbb{P}(V)^n\arrow["{(f,\dots,f)}"]{dd}\\
	 		\\
	 		\mathbb{P}(\Lambda_1)\times\dots\times\mathbb{P}(\Lambda_n) \arrow["{(g_1^{-1},\dots,g_n^{-1})}"]{rr}& & \mathbb{P}(\Lambda)^n
	 	\end{tikzcd}
	 \end{equation}
	 We denote the coordinates of the $j$-th factor in $\mathbb{P}(\Lambda)^n$ by $x_{1j},\dots,x_{dj}$, and define the homogeneous ideal $I_{\underline{g}}$ in $K\left[\left(x_{ij}\right)_{\substack{i=1,\dots,d\\j=1,\dots,n}}\right]$ as the ideal generated by the $2\times 2$ minors of the matrix
	 
	 \[
	\left( \begin{array}{c|c|c}
	 	g_1\begin{pmatrix}
	 		x_{11}\\
	 		\vdots\\
	 		x_{d1}
	 	\end{pmatrix} 
	 	&\hspace*{10pt}\hdots\hspace*{10pt} &g_n\begin{pmatrix}
	 		x_{1n}\\
	 		\vdots\\
	 		x_{dn}
	 	\end{pmatrix}
	 \end{array}\right).
	 \]
	 Then, the Mustafin variety $\mathcal{M}(\Gamma)$ is isomorphic to the subscheme in $\mathbb{P}(\Lambda)^n$ cut out by the ideal $I(\Gamma) =   I_{\underline{g}} \cap \Ocal_K\left[\left(x_{ij}\right)_{\substack{i=1,\dots,d\\j=1,\dots,n}}\right]$. Moreover, the special fiber is isomorphic to the variety in $(\mathbb{P}_{\overline{\K}}^{d-1})^n$ cut out by the ideal $I(\Gamma)_{\overline{\K}}$ in $\overline{\K}\left[\left(x_{ij}\right)_{\substack{i=1,\dots,d\\j=1,\dots,n}}\right]$ given by
	 \[
	 I(\Gamma)_{\overline{\K}} = I(\Gamma) \mod \mfrak_K.
	 \]
	 
	 \begin{example}
	 	Let $K=\mathbb{Q}_2$ (then $\Ocal_K = \Z_2$) and fix three matrices
	 	\begin{equation}
	 		g_1 = \begin{pmatrix}
	 			3008 & 1088 & 304\\
	 			432 & 40 & 416 \\
	 			36 & 344 & 100
	 		\end{pmatrix},\quad
	 		g_2 = \begin{pmatrix}
	 			94 & 5376 & 3328\\
	 			6 & 1792 & 192\\
	 			48 & 160 & 196
	 		\end{pmatrix},\quad
	 		g_3 = \begin{pmatrix}
	 			3 & 592 & 16\\
	 			376 & 18 & 656\\
	 			256 & 40 & 3072
	 		\end{pmatrix}.
	 	\end{equation}
	 	Let $\Lambda_1=g_1 \Z_{2}^3,\Lambda_2 = g_2 \Z_{2}^3,\Lambda_3 = g_3 \Z_{2}^3$ and $\Gamma=\{[\Lambda_1],[\Lambda_2],[\Lambda_3]\}$. Then the Mustafin variety $\mathcal{M}(\Gamma)$ is the subscheme of $\mathbb{P}( \Z_2^3)$ cut out by 
	 	\begin{align}
	 		I(\Gamma) = I_{(g_1,g_2,g_3)} \cap \Z_2 \left[\left(x_{ij}\right)_{\substack{i=1,\dots,3\\j=1,\dots,3}}\right],
	 	\end{align}
	 	where $I_{(g_1,g_2,g_3)}$ is the ideal in $\Q_2\left[\left(x_{ij}\right)_{\substack{i=1,\dots,3\\j=1,\dots,3}}\right]$ generated by the $2 \times 2$ minors of the matrix
	 	\begin{equation}
	 	\begin{pmatrix}
	 		3008x_{11}+1088x_{21}+304x_{31} & 94x_{12}+5376x_{22}+3328x_{32} & 3x_{13}+592x_{23}+16x_{33}\\
	 		432x_{11}+40x_{21}+416x_{31} & 6x_{12}+1792x_{22}+192x_{32} & 376x_{13}+18x_{23}+656x_{33}\\
	 		36x_{11}+344x_{21}+100x_{31} & 48x_{12}+160x_{22}+196x_{32} & 256x_{13}+40x_{23}+3072x_{33}
	 	\end{pmatrix}
	 	\end{equation}
	 This can be computed by saturating the ideal generated over $\mathcal{O}_K$ by the $2\times 2$-minors with respect to the prime $(2)$. A generating set of this saturated ideal will still be a generating set when modding out the maximal ideal $\mathfrak{m}_K=(2)_{\mathcal{O}_K}$. We can compute this using \textsc{Singular} \cite{DGPS}, and we get the following ideal in $\F_2[(x_{ij})]$:
	 \begin{align}
	 	I(\Gamma)_{\F_2} = &\langle x_{32}x_{23},x_{11}x_{23}+x_{31}x_{23},x_{32}x_{13},x_{12}x_{13},x_{21}x_{13},x_{11}x_{13}+x_{21}x_{13}+x_{31}x_{13},\\
	 	&x_{21}x_{32},x_{21}x_{12},x_{11}x_{12}+x_{31}x_{12}\rangle\\
	 	=&\langle x_{23},x_{13},x_{32},x_{12}\rangle \cap\langle x_{23},x_{13},x_{12},x_{21}\rangle \cap \langle x_{23},x_{13},x_{21},x_{11}+x_{31}\rangle \cap\\
	 	&\langle x_{13},x_{32},x_{12},x_{11}+x_{31}\rangle\cap\langle x_{13},x_{32},x_{21},x_{11}+x_{31}\rangle\cap\langle x_{32},x_{12},x_{21},x_{11}+x_{31}\rangle .
	 \end{align}
	 The first, third and sixth ideal cut out a variety isomorphic to $\mathbb{P}_{\overline{\K}}^2$ and the second, fourth and fifth ideal cut out a variety isomorphic to $
	 \mathbb{P}_{\overline{\K}}^1\times\mathbb{P}_{\overline{\K}}^1$. Thus, the special fiber of $\mathcal{M}(\Gamma)$ is union of six irreducible components: three copies of $\mathbb{P}_{\overline{\K}}^2$ and three copies of $\mathbb{P}_{\overline{\K}}^1\times\mathbb{P}_{\overline{\K}}^1$.
	 \end{example}
	 
	 \subsection{Classification of special fibers} 
    \label{subsec:class}	 
	 In this section, we present a few results on the classification of irreducible components of Mustafin varieties. As we shall see, these irreducible components will be of the form $X(\underline{g})$ from Section \ref{imagesOfRationalMaps}.
	 
	 Let $\Gamma = \{ [\Lambda_1], \dots, [\Lambda_n] \}$ be a finite set of vertices in the Bruhat-Tits building $\Bfrak_d$. We fix another point $[\Lambda] \in \Bfrak_d^0$ with a representative lattice $\Lambda$. Then, there exist $g_1, \dots, g_n \in \GL(V)$ such that $\Lambda_i = g_i \Lambda$. As explained in Subsection \ref{sec:mustafin}, we get a map
	 
	 \begin{equation}
	 	\begin{tikzcd}
	 		\underline{g}\colon\mathbb{P}(V) \arrow["{(g_1^{-1},\dots,g_n^{-1})}"]{rrr}& & & \mathbb{P}(\Lambda)^n
	 	\end{tikzcd}.
	 \end{equation}
	 This map $\underline{g}$ naturally extends to a continuous rational map on $\mathbb{P}(\Lambda)$, that is
	 \begin{equation}
	 	\begin{tikzcd}
	 		\underline{g}\colon\mathbb{P}(\Lambda) \arrow[dashed,"{(g_1^{-1},\dots,g_n^{-1})}"]{rrr}& & & \mathbb{P}(\Lambda)^n.
	 	\end{tikzcd}
	 \end{equation}
	 We then get a map $\underline{g}_{\overline{\K}}$ on the special fiber of $\mathbb{P}(\Lambda)$, induced by $\underline{g}$:
	 \begin{equation}
	 	\label{equ-mapspecfib}
	 	\begin{tikzcd}
	 		\underline{g}_{\overline{\K}}\colon\mathbb{P}_{\overline{\K}}^{d-1}\cong\mathbb{P}(\Lambda)_{\overline{\K}} \arrow[dashed,"{( g_1^{-1},\dots,g_n^{-1})_{\overline{\K}}}"]{rrr}& & & \mathbb{P}(\Lambda)_{\overline{\K}}^n\cong\left(\mathbb{P}_{\overline{\K}}^{d-1}\right)^n.
	 	\end{tikzcd}
	 \end{equation}
	 We now give an explicit description of this map $\underline{g}_{\overline{\K}}$. For a fixed $\Ocal_K$-basis of $\Lambda$, let $A_i$ be the matrix representing $g_i$ in this basis. Let $\tilde{A_i}$ be the saturation of $A_i$ with respect to the prime ideal $(\pi) = \pi \Ocal_K$, i.e. if $s$ is the minimal valuation of the entries in $A_i$, then $\tilde{A_i}=\pi^{-s}A_i$. The map $\underline{g}$ is induced  by the matrices $A_1, \dots, A_n$ via the map
	 
	 \[
	 \Lambda  \xrightarrow{ ( A_1, \dots ,  A_n)}   \Lambda^n,\quad  x  \longmapsto (A_1 x, \dots, A_n x).
	 \]
	 The map $\underline{g}_{\overline{\K}}$ on the special fiber is then induced by the matrices $\overline{A_i} \equiv \tilde{A}_i \mod \pi$ as follows:
	 
	 \begin{equation}
	 	\begin{tikzcd}
	 		\underline{g}_{\overline{\K}} \colon\mathbb{P}_{\overline{\K}}^{d-1}\cong\mathbb{P}(\Lambda)_{\overline{\K}} \arrow[dashed,"{(\overline{A_1}, \dots, \overline{A}_n )}"]{rrrrr}& & & & & \mathbb{P}(\Lambda)_{\overline{\K}}^n\cong\left(\mathbb{P}_{\overline{\K}}^{d-1}\right)^n.
	 	\end{tikzcd}
	 \end{equation}
	 We can then associate to the point $[\Lambda] \in \Bfrak_d^0$ the variety $X_{[\Lambda], \Gamma}$ defined as the closure of the image of $\underline{g}_{\overline{\K}}$, i.e,
	 \[
	 X_{[\Lambda],\Gamma}= \overline{\mathrm{Im}(\underline{g}_{\overline{\K}})} \subseteq (\mathbb{P}(\Lambda)_{\overline{\K}})^n.
	 \]
	 Notice that the variety $X_{[\Lambda], \Gamma}$ lives in $(\mathbb{P}(\Lambda)_{\overline{\K}})^n$. We embed every such variety in the common space $(\mathbb{P}(\Ocal_K^d)_{\overline{\K}} ) ^n$. To do so, let $g_{\Lambda} \in \GL(V)$ be such that $\Lambda = g_{\Lambda} \Ocal_K^d $ and consider the isomorphism of schemes
	 \begin{equation}
	 	\begin{tikzcd}
	 		\underline{g}_{\Lambda}\colon\mathbb{P}(\Lambda)^n \arrow["{(g_{\Lambda }^{-1},\dots,g_{\Lambda}^{-1})}"]{rrr}& & & \mathbb{P}(\mathcal{O}_K^d)^n,
	 	\end{tikzcd}
	 \end{equation}
	 that maps the projective space of the lattice $\Lambda$ into the projective space of the standard lattice $\Ocal_K^d$. This isomorphism naturally enduces an ismorphism $\left(\underline{g}_{\Lambda}\right)_{\overline{\K}}$ of special fibers
	 \[
	 \left(\underline{g}_{ \Lambda }\right)_{\overline{\K}}: (\mathbb{P}(\Lambda)_{\overline{\K}})^n \longrightarrow (\mathbb{P}(\mathcal{O}_K^d)_{\overline{\K}})^n.
	 \]
	 The map $ \left(\underline{g}_{ \Lambda } \right)_{\overline{\K}}$ then maps the variety $X_{[\Lambda], \Gamma}$ into $(\mathbb{P}(\mathcal{O}_K^d)_{\overline{\K}})^n$, and we define
	 
	 \begin{equation}
	 	\mathcal{N}(\Gamma)=\bigcup_{[\Lambda] \in \Bfrak_d^0} \left(\underline{g}_{ \Lambda } \right)_{\overline{\K}}(X_{[\Lambda],\Gamma}) \subseteq\left(\mathbb{P}_K^{d-1}\right)^n= (\mathbb{P}(\mathcal{O}_K^d)_{\overline{\K}})^n,
	 \end{equation}
	 which we endow with the reduced scheme structure. The following is a reformulation of the main results in \cite{hahn2020mustafin}.\footnote{This classification result in \cite{hahn2020mustafin} was originally stated for $\overline{K}$ algebraically closed. However, the result is valid for any discretely valued field with infinite residue field with the same proof.}
	 
	 \begin{theorem}[{\cite[Thm.~1.2, Lem.~3.7, and Rmk.~3.14]{hahn2020mustafin}}]
	 	\label{thm-hlmust}
	 	We assume that $\overline{K}$ is infinite. Let $\Gamma=\{[\Lambda_1],\dots,[\Lambda_n]\}$ be a finite set of homothety classes of lattices and consider the embedding $\mathcal{M}(\Gamma)\subseteq\mathbb{P}(\mathcal{O}_K^d)^n$ induced by $\mathbb{P}(V)\xrightarrow{(g_{\Lambda_1},\dots,g_{ \Lambda_n })}\mathbb{P}(\mathcal{O}_K^d)^n$. For any $[\Lambda]\in\mathfrak{B}_d^0$, the following hold:
	 	\begin{enumerate}[label=$(\arabic*)$ ]
	 		\item  $\left(\underline{g}_{\Lambda } \right)_{\overline{K}} (X_{[\Lambda],\Gamma})\subseteq\mathcal{M}(\Gamma)_{\overline{\K}}\subseteq\mathbb{P}(\mathcal{O}_K)^n_{\overline{\K}}$ with respect to the above embedding,
	 		\vspace{2mm}
	 		\item $\left(\underline{g}_{ \Lambda }\right)_{\overline{K}} (X_{[\Lambda],\Gamma})$ is an irreducible component of the Mustafin variety $\mathcal{M}(\Gamma)_{\overline{\K}}$ if and only if $\mathrm{dim}(X_{[\Lambda],\Gamma})=d-1$,
	 		\vspace{2mm}
	 		\item if $\mathrm{dim}(X_{[\Lambda],\Gamma})=d-1$, then $[\Lambda]\in\mathrm{conv}(\Gamma)$.
	 	\end{enumerate}
	 	Furthermore, any irreducible component of $\mathcal{M}(\Gamma)_{\overline{\K}}$ is of the form $\left(\underline{g}_{\Lambda}\right)_{\overline{K}}(X_{[\Lambda],\Gamma})$ for a unique homothety class $[\Lambda]\in\mathrm{conv}(\Gamma)$.
	 	Finally, we have
	 	\[
	 	\mathcal{M}(\Gamma)_{\overline{\K}}=\mathcal{N}(\Gamma)=\bigcup_{[\Lambda] \in \mathrm{conv}(\Gamma)} \left(\underline{g}_{ \Lambda } \right)_{\overline{\K}}(X_{[\Lambda],\Gamma}) \subseteq\left(\mathbb{P}_K^{d-1}\right)^n= (\mathbb{P}(\mathcal{O}_K^d)_{\overline{\K}})^n.
	 	\]
	 \end{theorem}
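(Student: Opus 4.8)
The plan is to recognise the statement as a transcription, into the notation fixed in \cref{subsec:class}, of the classification of the irreducible components of a Mustafin variety obtained in \cite{hahn2020mustafin}, together with a verification that the running hypothesis there that $\overline{\K}$ be algebraically closed can be weakened to $\overline{\K}$ infinite. The first step is to fix the dictionary between the two setups. Given $[\Lambda]\in\Bfrak_d^0$, a choice of $g_i\in\GL(V)$ with $\Lambda_i=g_i\Lambda$, and a choice of $g_\Lambda$ with $\Lambda=g_\Lambda\Ocal_K^d$, the rational map $\underline{g}_{\overline{\K}}$ of \eqref{equ-mapspecfib} — induced by the reductions $\overline{A_i}$ of the $(\pi)$-saturations of the integral matrices $A_i$ appearing there — is exactly the map whose image closure is the ``$[\Lambda]$-component'' in \cite{hahn2020mustafin}; likewise $X_{[\Lambda],\Gamma}$ and its normalisation $\underline{g}_\Lambda(X_{[\Lambda],\Gamma})\subseteq(\mathbb{P}(\Ocal_K^d)_{\overline{\K}})^n$ agree with the objects considered there. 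Granting this identification, the theorem becomes a compilation of results of \cite{hahn2020mustafin}: the containment $(1)$ is \cite[Lem.~3.7]{hahn2020mustafin}, while $(2)$, $(3)$, the uniqueness of $[\Lambda]$, and the final equality $\mathcal{M}(\Gamma)_{\overline{\K}}=\mathcal{N}(\Gamma)$ are contained in \cite[Thm.~1.2 and Rmk.~3.14]{hahn2020mustafin}.

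For orientation, the arguments behind the clauses can be recalled as follows. The rational map $\underline{g}\colon\mathbb{P}(\Lambda)\dashrightarrow\mathbb{P}(\Lambda)^n$ is defined over $\Spec\Ocal_K$ and restricts on the generic fibre to the map $\underline{f}$ of \cref{sec:mustafin}, so the closure of its graph, transported by $\underline{g}_\Lambda$ into the standard lattice, is $\mathcal{M}(\Gamma)$; passing to the special fibre and using that $\mathcal{M}(\Gamma)$ is flat over $\Spec\Ocal_K$ with reduced special fibre (the theorem of \cite{MR2861606} recalled in \cref{sec:mustafin}) gives $\underline{g}_\Lambda(X_{[\Lambda],\Gamma})\subseteq\mathcal{M}(\Gamma)_{\overline{\K}}$, which is clause $(1)$. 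For clauses $(2)$ and $(3)$ the essential input is the dimension formula of \cite{li2018images}, $\dim X_{[\Lambda],\Gamma}=d_{\underline{g},n}$, with $d_{\underline{g},n}$ computed from the kernels of the reduced matrices $\overline{A_i}$: one shows that $\underline{g}_\Lambda(X_{[\Lambda],\Gamma})$ is an irreducible component of the (pure $(d-1)$-dimensional) special fibre precisely when this dimension is $d-1$, and that $d_{\underline{g},n}=d-1$ forces $[\Lambda]\in\mathrm{conv}(\Gamma)$ by a combinatorial argument on the admissible exponent vectors in $M(h)$ — a class outside the convex hull can be replaced by a nearer class of $\mathrm{conv}(\Gamma)$ along which a common kernel strictly grows, dropping the dimension of the image. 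Uniqueness of the class and the final union then follow because distinct classes in $\mathrm{conv}(\Gamma)$ give components that meet in dimension $<d-1$, whereas every irreducible component of $\mathcal{M}(\Gamma)_{\overline{\K}}$, being cut out by $2\times 2$ minors, is of the displayed form.

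It remains to relax the hypothesis on $\overline{\K}$. The only point at which algebraic closedness enters the arguments of \cite{hahn2020mustafin} is through the results of \cite{li2018images} on images of rational maps, and those hold over an arbitrary infinite field, as recorded in the footnote in \cref{pre-rat}; every other ingredient — flatness and reducedness of $\mathcal{M}(\Gamma)$, the dimension count $\dim X_{[\Lambda],\Gamma}=d_{\underline{g},n}$, the irreducibility of the image closure of a rational map out of the geometrically irreducible $\mathbb{P}^{d-1}_{\overline{\K}}$, and the recognition of irreducible components and of their intersections — is unaffected by the faithfully flat base change to an algebraic closure of $\overline{\K}$, so the proof of \cite{hahn2020mustafin} goes through verbatim. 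The step I expect to be the real obstacle is precisely the bookkeeping in the dictionary: one must check that the normalisations used here (the particular choices of $g_i$ and $g_\Lambda$, the saturation with respect to $(\pi)$, and the conjugation bringing all the $X_{[\Lambda],\Gamma}$ into the common ambient space $(\mathbb{P}^{d-1}_{\overline{\K}})^n$) match, on the nose, the conventions of the source, so that the components identified in the two papers are literally the same subschemes; once this is settled, the theorem follows by reading off the cited statements.
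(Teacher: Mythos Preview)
Your proposal is correct and matches the paper's treatment: the theorem is not proved in the paper but is stated as a reformulation of \cite[Thm.~1.2, Lem.~3.7, and Rmk.~3.14]{hahn2020mustafin}, with a footnote noting that the original hypothesis of algebraically closed $\overline{\K}$ can be relaxed to infinite $\overline{\K}$ with the same proof. Your account goes somewhat beyond the paper by sketching the actual arguments from \cite{hahn2020mustafin} and isolating where the field hypothesis enters, which is helpful but not something the paper itself provides.
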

	 
	 \noindent
	 We end this subsection with an example illustrating our construction.
	 
	 \begin{example}
	 	Let $K=\mathbb{Q}((\pi))$ and fix two lattices $\Lambda_1,\Lambda_2$ given by
	 	\begin{equation}
	 		\Lambda_1=\begin{pmatrix}
	 			1 & 0 & 0\\
	 			0 & 1 & 0\\
	 			0 & 0 & 1
	 		\end{pmatrix}\mathcal{O}_K^3\quad\textrm{and}\quad \Lambda_2= \begin{pmatrix}
	 			1 & 0 & 0\\
	 			0 & \pi & 0\\
	 			0 & 0 & \pi^2
	 		\end{pmatrix}\mathcal{O}_K^3.
	 	\end{equation}
	 	We now consider the Mustafin variety $\mathcal{M}(\Gamma)$ for $\Gamma=\{[\Lambda_1],[\Lambda_2]\}$. Using the procedure described in Section \ref{sec:mustafin}, the Mustafin variety $\Mcal(\Gamma)$ is cut by the ideal generated by the $2\times 2$ minors of the matrix
	 	\begin{equation}
	 		\begin{pmatrix}
	 			x_{11} & x_{12}\\
	 			x_{21} & \pi x_{22}\\
	 			x_{31} & \pi^2x_{32}
	 		\end{pmatrix}.
	 	\end{equation}
	 	So the special fiber $\mathcal{M}(\Gamma)_{\overline{\K}}\subseteq\mathbb{P}_{\overline{\K}}^2\times\mathbb{P}_{\overline{\K}}^2$ is cut out by
	 	\begin{equation}
	 		\langle x_{12},x_{22}\rangle_{{\overline{\K}}[x_{ij}]}\cap\langle x_{31},x_{12}\rangle_{{\overline{\K}}[x_{ij}]}\cap\langle x_{21},x_{31}\rangle_{{\overline{\K}}[x_{ij}]}.
	 	\end{equation}
	 	
	 	\noindent
	 	In other words, $\mathcal{M}(\Gamma)_{\overline{\K}}$ is the union of the three irreducible components 
	 	
	 	\[
	 	\mathbb{P}_{\overline{\K}}^2\times\{ (0:0:1) \}, \quad  \{ (1:0:0)\} \times \mathbb{P}_{\overline{\K}}^2 \quad \text{ and } \quad \mathbb{P}_{\overline{\K}}^1\times\mathbb{P}_{\overline{\K}}^1.
	 	\]
	 	 Moreover, is easily seen that $\mathrm{conv}(\Gamma)=\{[\Lambda_1],[\Lambda_2],[\Lambda_3]\}$ where
	 	\[
	 	\Lambda_3 =\begin{pmatrix}
	 		1 & 0 & 0\\
	 		0 & 1 & 0\\
	 		0 & 0 & \pi
	 	\end{pmatrix}\mathcal{O}_K^3.
	 	\]
	 	Indeed, we will now show that the first component $\mathbb{P}_{\overline{\K}}^2\times\{ (0:0:1)  \}$ corresponds to $[\Lambda_1]$, the second component $\{ (1:0:0) \}\times\mathbb{P}_{\overline{\K}}^2$ corresponds to $[\Lambda_2]$ and the last component $\mathbb{P}_{\overline{\K}}^1\times\mathbb{P}_{\overline{\K}}^1$ corresponds to the third point $[\Lambda_3]$ in the convex hull of $\Gamma$. To see that, we fix representative matrices of $\Lambda_1, \Lambda_2$ and $\Lambda_3$:
	 	\begin{equation}
	 		h_{\Lambda_1}=\begin{pmatrix}
	 			1 & 0 & 0\\
	 			0 & 1 & 0\\
	 			0 & 0 & 1
	 		\end{pmatrix},\quad h_{\Lambda_2}=\begin{pmatrix}
	 			1 & 0 & 0\\
	 			0 & \pi & 0\\
	 			0 & 0 & \pi^2
	 		\end{pmatrix},\quad h_{\Lambda_3}=\begin{pmatrix}
	 			1 & 0 & 0\\
	 			0 & 1 & 0\\
	 			0 & 0 & \pi
	 		\end{pmatrix}.
	 	\end{equation}
	 	
	 	\begin{enumerate}[label=(\arabic*)]
	 		\item For $\Lambda=\Lambda_1$ in the above procedure, we obtain
	 		\begin{equation}
	 			g_1=h_{ \Lambda_1 }h_{ \Lambda_1 }^{-1}=\begin{pmatrix}
	 				1 & 0 & 0\\
	 				0 & 1 & 0\\
	 				0 & 0 & 1
	 			\end{pmatrix}\quad\textrm{and}\quad g_2=h_{ \Lambda_2 }h_{ \Lambda_1 }^{-1}=\begin{pmatrix}
	 				1 & 0 & 0\\
	 				0 & \pi & 0\\
	 				0 & 0 & \pi^2
	 			\end{pmatrix}.
	 		\end{equation}
	 		Thus, we obtain
	 		\begin{align}
	 			\tilde{A}_1 = \begin{pmatrix}
	 				1 & 0 & 0\\
	 				0 & 1 & 0\\
	 				0 & 0 & 1
	 			\end{pmatrix}\equiv\begin{pmatrix}
	 				1 & 0 & 0\\
	 				0 & 1 & 0\\
	 				0 & 0 & 1
	 			\end{pmatrix}\,\mathrm{mod}\,\pi\textup{ and }
	 			\tilde{A}_2=\begin{pmatrix}
	 				\pi^2 & 0 & 0\\
	 				0 & \pi & 0\\
	 				0 & 0 & 1
	 			\end{pmatrix}\equiv\begin{pmatrix}
	 				0 & 0 & 0\\
	 				0 & 0 & 0\\
	 				0 & 0 & 1
	 			\end{pmatrix}\,\mathrm{mod}\,\pi
	 		\end{align}
	 		We see immediately that $X_{[\Lambda_1], \Gamma } = \overline{\mathrm{Im}(\underline{g}_{\Lambda_1})}$ is cut out by $\langle x_{12},x_{22}\rangle_{{\overline{\K}}[x_{ij}]}$.
	 		
	 		\item Similarly, for $\Lambda=\Lambda_2$ we obtain
	 		\begin{align}
	 			\tilde{A}_1 = \begin{pmatrix}
	 				1 & 0 & 0\\
	 				0 & \pi & 0\\
	 				0 & 0 & \pi^2
	 			\end{pmatrix}\equiv\begin{pmatrix}
	 				1 & 0 & 0\\
	 				0 & 0 & 0\\
	 				0 & 0 & 0
	 			\end{pmatrix}\,\mathrm{mod}\,\pi\textup{ and }
	 			\tilde{A_2}=\begin{pmatrix}
	 				1 & 0 & 0\\
	 				0 & 1 & 0\\
	 				0 & 0 & 1
	 			\end{pmatrix}\equiv\begin{pmatrix}
	 				1 & 0 & 0\\
	 				0 & 1 & 0\\
	 				0 & 0 & 1
	 			\end{pmatrix}\,\mathrm{mod}\,\pi
	 		\end{align}
	 		and thus $X_{[\Lambda_2]}=\overline{\mathrm{Im}(\underline{g}_{ \Lambda_2 })}$ is cut out by $\langle x_{21},x_{31}\rangle_{{\overline{\K}}[x_{ij}]}$.
	 		
	 		\item Finally, for $\Lambda= \Lambda_3 $ we obtain
	 		\begin{align}
	 			\tilde{A}_1=\begin{pmatrix}
	 				1 & 0 & 0\\
	 				0 & 1 & 0\\
	 				0 & 0 & \pi
	 			\end{pmatrix}\equiv\begin{pmatrix}
	 				1 & 0 & 0\\
	 				0 & 1 & 0\\
	 				0 & 0 & 0
	 			\end{pmatrix}\,\mathrm{mod}\,\pi\textup{ and }
	 			\tilde{A}_2=\begin{pmatrix}
	 				\pi & 0 & 0\\
	 				0 & 1 & 0\\
	 				0 & 0 & 1
	 			\end{pmatrix} \equiv \begin{pmatrix}
	 				0 & 0 & 0\\
	 				0 & 1 & 0\\
	 				0 & 0 & 1
	 			\end{pmatrix}\,\mathrm{mod}\,\pi
	 		\end{align}
	 		\noindent and thus $X_{[\Lambda_3]}=\overline{\mathrm{Im}(\underline{g}_{\Lambda_3})}$ is cut out by $\langle x_{31},x_{12}\rangle_{{\overline{\K}}[x_{ij}]}$.

	 	\end{enumerate}
	 \end{example}
	 
	 \subsection{Rank-metric codes and Mustafin varieties} \label{subsec:rankmet} 
	 Let $K$ be field with a non-trivial discrete valuation and let $\Lcal\subseteq\mathrm{Mat}_{d\times e}(K)$ be a linear space of matrices of dimension $n$ given by
	 \begin{equation}
	 	\mathcal{L}=\langle A_1,\dots,A_n\rangle_{K}\quad\textrm{for}\quad A_1,\dots,A_n\in\mathrm{Mat}_{d\times e}(K).
	 \end{equation}
	 
	 \noindent
	 We define $\Kcal = \Lcal \cap \Mat_{d \times e}(\Ocal_K)$. We note that $\Kcal$ is a free $\Ocal_K$-module since $\Ocal_K$ is a principal ideal domain and a submodule of the free $\Ocal_K$-module $\Mat_{d\times e}(K)$.
	 The matrices $A_1, \dots, A_n$ need not be an $\Ocal_K$-basis of $\Kcal$, even if $A_1, \dots, A_n \in \mathrm{Mat}_{d \times e}(\Ocal_K)$. A basis can however be obtained by computing the saturation of $\langle A_1,\dots,A_n\rangle_{\Ocal_K}$ with respect to the uniformiser $\pi$, which, for example, can be done via Gr\"obner bases (see \cite[Chapter 4.4]{cox2013ideals}). For each $1 \le i \le n$, let us write  $A_i = (a_{1}^{(i)} \mid \dots \mid a_{e}^{(i)})$, where $a_{1}^{(i)}, \dots, a_{e}^{(i)}$ are the columns of $A_i$. Let us also consider the matrices $B_i$ define by
	 \[
	 B_i = (a_{i}^{(1)} \mid \dots \mid a_{i}^{(n)} ) \in \mathrm{Mat}_{d \times n}(K).
	 \]
	 The matrices $B_1, \dots, B_e$ will be our bridge to Mustafin varieties. Their importance stems from the fact that, for a column vector $x = (x_1, \dots, x_n)^\top \in K^n$, we have
	 \[
	 x_1 A_1 + \dots + x_n A_n =  (B_1 x \mid \dots \mid B_e x).
	 \]
	 So the linear space $\Lcal$ is given by
	 \begin{align}
	      \Lcal &= \left \{a_1  A_1 + \dots + a_n A_n :  a_1 ,\dots, a_n \in K \right\}\\
	            &= \{ (B_1 a, \dots, B_e a) : a \in K^n \textrm{ column vector}\}. 
	 \end{align}
	 Therefore we can write $\Kcal$ as follows
	 \begin{align}
	 \Kcal &= \left(K A_1 \oplus \dots \oplus K A_d\right)\cap\Mat_{d\times e}(\mathcal{O}_K)\\ 
	       &= \left\{ (B_1 a \mid \dots \mid B_e a) : a \in K^n \right\}\cap\Mat_{d\times e}(\mathcal{O}_K).    
	 \end{align}
	 \noindent
	 In this section we study the dimension of the ${\overline{\K}}$-linear space of matrices $\overline{\Kcal} = \Kcal \mod \pi$. To this end, we will make a connection to convex hulls in Bruhat-Tits buildings to provide certificates for $\dim(\overline{\Kcal})$ to be maximal. For the rest of this section, we make the following general assumption.
	 
	 \begin{assumption}
	 	We assume that $n=d$ and for each $i$ we have $B_i \in \GL(d,K)$. Furthermore, for each $1 \leq i \leq n $, we denote $\Lambda_i = B_i^{-1}\mathcal{O}_K^d$.
	 \end{assumption}

	 \noindent
	 An important notion for our considerations will be that of the \textit{multi-projective closure of a linear space of matrices}.
	 
	 \begin{definition}
	 	For $\mathcal{L}$ as above, we define the multi-projective closure $\mathcal{MP}(\mathcal{L})$ of $\Lcal$ as the Zariski closure
	 	\begin{equation}
	 		\overline{\left\{(\alpha_1,\dots,\alpha_e)\in\left(\mathbb{P}_K^{d-1}\right)^e : (\alpha_1\mid\dots\mid\alpha_e) \in \mathcal{L}\,\textrm{and for all}\,i\,\textrm{we have}\,\alpha_i\neq \underline{0}\right\}}\subseteq\left(\mathbb{P}_K^{d-1}\right)^e.
	 	\end{equation}
	 	Analogously, we define the multi-projective closure $\mathcal{MP}(\overline{\Kcal})\subseteq\left(\mathbb{P}_{\overline{\K}}^{d-1}\right)^e$.
	 \end{definition}
	 
	 \noindent
	 Notice that the multi-projective closure of an rank metric code can be empty.

	 \begin{lemma}
	 \label{lem:projcl}
	 	If $\mathrm{dim}(\mathcal{MP}(\overline{\mathcal{K}}))=d-1$, then $\mathrm{dim}(\overline{\mathcal{K}})=\mathrm{dim}(\Kcal)$. Moreover, $\mathcal{MP}(\overline{\mathcal{K}})$ is irreducible.
	 \end{lemma}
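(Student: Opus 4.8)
The plan is to reduce the statement to an elementary dimension count for the column-wise projectivization map, so that no input from the classification of Mustafin varieties is needed at this stage. Write $\delta = \dim_{\overline{K}}(\overline{\mathcal{K}})$. Since $\overline{\mathcal{K}}$ is the image of $\mathcal{K}$ under reduction modulo $\pi$ and $\mathcal{K}$ is a free $\mathcal{O}_K$-module of rank $n$, we have $\delta \le n$, while $\dim(\mathcal{K}) = \dim_K(\mathcal{L}) = n$, which equals $d$ by the standing Assumption. Thus it suffices to prove that the hypothesis $\dim\mathcal{MP}(\overline{\mathcal{K}}) = d-1$ forces $\delta \ge d$; then $\delta = n = d = \dim(\mathcal{K})$.

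First I would unwind the definition: $\mathcal{MP}(\overline{\mathcal{K}})$ is the Zariski closure of the image of the morphism
\[
\varpi\colon U \longrightarrow \bigl(\mathbb{P}^{d-1}_{\overline{K}}\bigr)^{e}, \qquad (\alpha_1\mid\dots\mid\alpha_e)\longmapsto \bigl([\alpha_1],\dots,[\alpha_e]\bigr),
\]
where $U\subseteq\overline{\mathcal{K}}$ is the open subset consisting of those matrices none of whose columns is zero. By hypothesis $\mathcal{MP}(\overline{\mathcal{K}})$ has dimension $d-1\ge 0$, hence is nonempty, so $U\neq\emptyset$; being a nonempty open subset of the affine space $\overline{\mathcal{K}}\cong\mathbb{A}^{\delta}_{\overline{K}}$, it is dense and $\dim U=\delta$.

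Next comes the key observation: the map $\varpi$ is invariant under the scaling action of $\mathbb{G}_m$ on $U$ (rescaling a matrix does not change the projective classes of its columns), so it factors through the quotient $\mathbb{P}(U)=U/\mathbb{G}_m$, a nonempty open subvariety of $\mathbb{P}(\overline{\mathcal{K}})\cong\mathbb{P}^{\delta-1}_{\overline{K}}$. Consequently
\[
\dim\mathcal{MP}(\overline{\mathcal{K}}) \;=\; \dim\overline{\varpi(U)} \;\le\; \dim\mathbb{P}(U) \;=\; \delta-1 .
\]
Feeding in $\dim\mathcal{MP}(\overline{\mathcal{K}})=d-1$ gives $\delta\ge d$, and combined with $\delta\le n=d$ we conclude $\delta=n=\dim(\mathcal{K})$, which is the first assertion. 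For irreducibility, $\mathbb{P}(\overline{\mathcal{K}})\cong\mathbb{P}^{\delta-1}_{\overline{K}}$ is irreducible, hence so is its nonempty open subset $\mathbb{P}(U)$, hence so is the image $\varpi(U)$, and therefore so is its closure $\mathcal{MP}(\overline{\mathcal{K}})$.

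I do not expect a serious obstacle here. The only points requiring care are bookkeeping ones: that $U$ must be shown nonempty (which is exactly what the hypothesis on the dimension guarantees, since otherwise $\mathcal{MP}(\overline{\mathcal{K}})$ would be empty), and that $\dim(\mathcal{K})=n=d$ uses the standing Assumption $n=d$. The genuinely geometric content — scaling-invariance of $\varpi$ yielding $\dim\mathcal{MP}(\overline{\mathcal{K}})\le\dim\overline{\mathcal{K}}-1$ — is entirely elementary, and the deeper Mustafin-theoretic machinery will only be needed for the subsequent, finer results.
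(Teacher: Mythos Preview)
Your proposal is correct and follows essentially the same approach as the paper: both arguments realize $\mathcal{MP}(\overline{\mathcal{K}})$ as the closure of the image of a rational map $\mathbb{P}^{\delta-1}_{\overline{K}}\dashrightarrow(\mathbb{P}^{d-1}_{\overline{K}})^e$ (the paper writes this map via the column-rearranged matrices $\tilde{B}_i$, you phrase it as the $\mathbb{G}_m$-quotient of $\varpi$), and then read off irreducibility and the bound $\dim\mathcal{MP}(\overline{\mathcal{K}})\le\delta-1$, concluding $\delta=d$ from the standing assumption $n=d$.
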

	 
	 \begin{proof}
    Let us choose a basis $\tilde{A}_1,\dots,\tilde{A}_m$ of $\overline{\Kcal}$ and, for each $1 \leq i \leq m$, write 
    \begin{equation}
        \tilde{A}_i=(\tilde{a}_{1}^{(i)}\mid\dots\mid \tilde{a}_{e}^{(i)}).
    \end{equation}
    We see immediately that $m\le d=n$ and define the matrices $\tilde{B}_1, \dots, \tilde{B}_e$ as follows:
    \begin{equation}
        	 \tilde{B}_i = (\tilde{a}_{i}^{(1)} \mid \dots \mid \tilde{a}_{i}^{(m)} ) \in \mathrm{Mat}_{d \times m}(\overline{K}).
    \end{equation}
    This construction naturally gives rise to the rational map
    \begin{equation}
    \begin{tikzcd}
        \tilde{b}\colon\mathbb{P}_{\overline{K}}^{m-1} \arrow[dashed,"{(\tilde{B}_1,\dots,\tilde{B}_e)}"]{rrr} & & & \left(\mathbb{P}_{\overline{K}}^{d-1}\right)^e.
    \end{tikzcd}
    \end{equation}
            It follows that $\mathcal{MP}(\overline{\Kcal})=\overline{\mathrm{Im}(\tilde{b})}$ and so  that $\mathcal{MP}(\overline{\Kcal})$ is irreducible and  $\mathrm{dim}(\mathcal{MP}(\overline{\Kcal}))\le m-1$. Thus, if we have $\mathrm{dim}(\mathcal{MP}(\overline{\Kcal}))=d-1$ and since $m\le d$, we obtain $m=d$ and therefore $\mathrm{dim}(\overline{\Kcal})=m=d=\mathrm{dim}(\Kcal)$.
	\end{proof}
	 
    \begin{example}
    \label{ex:projcl}
        \begin{enumerate}
            \item If $\mathcal{L}=\langle A_1,A_2\rangle$, where $ A_1=\begin{pmatrix}
                1 & 0\\
                0 & 1
            \end{pmatrix}$
            and $A_2=\begin{pmatrix}
                0 & 1\\
                1 & 0
            \end{pmatrix}$, then we have that $\mathcal{MP(\overline{\mathcal{K}})}\subset\mathbb{P}^1\times\mathbb{P}^1$ is isomorphic to the subvariety cut out by the ideal $I=(x_{11}x_{12}-x_{21}x_{22})$, where $([x_{11}\colon x_{21}],[x_{12}\colon x_{22}])$ are the coordinates of $\mathbb{P}^1\times\mathbb{P}^1$. In other words, it is isomorphic to the diagonal subvariety of $\mathbb{P}^1\times\mathbb{P}^1$ and therefore irreducible, of dimension $1$. \cref{lem:projcl} yields then that $\mathrm{dim}(\mathcal{\overline{K}})=2$, which can also be seen immediately from the generators of $\mathcal{L}$.
            \item Let $\mathcal{L}=\langle A_1,A_2\rangle$, where
            $A_1=\begin{pmatrix}
                1 & 0\\
                0 & \pi
            \end{pmatrix}$ and $A_2=\begin{pmatrix}
                0 & 1\\
                \pi & 0
            \end{pmatrix}$. Since in this case $A_1$ and $A_2$ are also a basis for $\mathcal{K}$, we see that
            \begin{equation}
                \mathcal{\overline{K}}=\left\langle\begin{pmatrix}
                    1 & 0\\
                    0 & 0
                \end{pmatrix},\begin{pmatrix}
                    0 & 1\\
                    0 & 0
                \end{pmatrix}\right\rangle.
            \end{equation}
            In this case, we observe that $\mathcal{MP(\overline{K})}\subset\mathbb{P}^1\times\mathbb{P}^1$ is simply the point $([1\colon 0],[1\colon 0])$ in the same coordiantes as the first example.  Therefore, we obtain $\mathrm{dim}(\mathcal{MP(\overline{K})})=0$, although we see that $\mathrm{dim}(\overline{\Kcal})=2$. We thus see that the condition in \cref{lem:projcl} is only sufficient and not necessary.
        \end{enumerate}
    \end{example}

	 \begin{proposition}
	 \label{prop-metricmust}
	 	We assume that $\overline{K}$ is infinite. Let $\Gamma=\{[\Lambda_1],\dots,[\Lambda_e]\}\subseteq\mathfrak{B}_d^0$ be the finite set of homothety classes induced by $B_1,\dots,B_e$ as above and consider the embedding $\mathcal{M}(\Gamma)\subseteq\mathbb{P}(\mathcal{O}_K^d)^e$ induced by $\mathbb{P}(V)\xrightarrow{(B_1,\dots,B_e)}\mathbb{P}(\mathcal{O}_K^d)^e$. The following holds:
	 	
	 	\begin{enumerate}[label=$(\arabic*)$]
	 		\item We have $\mathcal{MP}(\overline{\Kcal})\subseteq\mathcal{M}(\Gamma)_{\overline{\K}}$.
	 		\item $\mathcal{MP}(\overline{\Kcal})$ is an irreducible component of $\mathcal{M}(\Gamma)_{\overline{\K}}$ if and only if $\mathrm{dim}(\mathcal{MP}(\overline{\Kcal}))=d-1$.
	 	\end{enumerate}
	 \end{proposition}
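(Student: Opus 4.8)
The plan is to recognise $\mathcal{MP}(\overline{\Kcal})$ as (the empty scheme or) exactly one of the varieties $\underline{g}_{\Lambda}(X_{[\Lambda],\Gamma})$ occurring in the classification of $\mathcal{M}(\Gamma)_{\overline{\K}}$ in \cref{thm-hlmust}, namely the one attached to the class of $\Lambda^{*}:=\bigcap_{i=1}^{e}\Lambda_{i}$, and then to read off $(1)$ and $(2)$ from that theorem. First I set up the dictionary between $\Kcal$ and $\Lambda^{*}$. The $K$-linear map $\Phi\colon K^{n}\to\Lcal$, $a\mapsto(B_{1}a\mid\dots\mid B_{e}a)$, is an isomorphism (injective since each $B_{i}\in\GL(d,K)$), and $\Phi^{-1}(\mathrm{Mat}_{d\times e}(\Ocal_{K}))=\bigcap_{i}B_{i}^{-1}\Ocal_{K}^{d}=\Lambda^{*}$, so $\Phi$ restricts to an $\Ocal_{K}$-isomorphism $\Lambda^{*}\xrightarrow{\sim}\Kcal$. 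Fix an $\Ocal_{K}$-basis $v_{1},\dots,v_{n}$ of $\Lambda^{*}$ and set $M=(v_{1}\mid\dots\mid v_{n})\in\GL(n,K)$; then $\tilde{A}_{j}:=\Phi(v_{j})=(B_{1}v_{j}\mid\dots\mid B_{e}v_{j})$, $j=1,\dots,n$, form an $\Ocal_{K}$-basis of $\Kcal$, and — as $\Kcal\cap\pi\,\mathrm{Mat}_{d\times e}(\Ocal_{K})=\pi\Kcal$ because $\Lcal$ is a $K$-subspace — their reductions $\overline{\tilde{A}}_{1},\dots,\overline{\tilde{A}}_{n}$ are a $\overline{\K}$-basis of $\overline{\Kcal}$. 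The $i$-th column block of the general element $\sum_{j}c_{j}\overline{\tilde{A}}_{j}$ of $\overline{\Kcal}$ is $\overline{B_{i}M}\,c$, where $B_{i}M:=(B_{i}v_{1}\mid\dots\mid B_{i}v_{n})\in\mathrm{Mat}_{d\times n}(\Ocal_{K})$. Hence, by the preceding lemma, $\mathcal{MP}(\overline{\Kcal})$ is the Zariski closure in $(\mathbb{P}_{\overline{\K}}^{d-1})^{e}$ of the image of the rational map $[c]\mapsto([\overline{B_{1}M}\,c],\dots,[\overline{B_{e}M}\,c])$ from $\mathbb{P}_{\overline{\K}}^{n-1}$, defined where all $\overline{B_{i}M}\,c\neq0$; in particular $\mathcal{MP}(\overline{\Kcal})=\emptyset$ as soon as $\overline{B_{i}M}=0$ for some $i$, since then every element of $\overline{\Kcal}$ has vanishing $i$-th column block.

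Now I locate the matching component. Taking $m_{1}=\dots=m_{e}=0$ in the description of the convex hull of a finite set of lattice classes shows $[\Lambda^{*}]\in\mathrm{conv}(\Gamma)$. Unwinding the construction of \cref{subsec:class} at the point $[\Lambda^{*}]$, with representative $\Lambda^{*}=M\Ocal_{K}^{d}$ and basis $v_{1},\dots,v_{n}$: one checks that, in the coordinates identifying $\mathbb{P}(\Lambda^{*})$ with $\mathbb{P}(\Ocal_{K}^{d})$ via $v_{1},\dots,v_{n}$, the $i$-th component of the rational map $\underline{g}$ of \cref{subsec:class} is given by the matrix $B_{i}M$, so that $\underline{g}_{\overline{\K}}$ is $[c]\mapsto([\overline{\pi^{-s_{1}}B_{1}M}\,c],\dots,[\overline{\pi^{-s_{e}}B_{e}M}\,c])$ with $s_{i}=\min\{\val_{K}(\xi):\xi\text{ an entry of }B_{i}M\}\geq0$; consequently $\underline{g}_{\Lambda^{*}}(X_{[\Lambda^{*}],\Gamma})$ is the Zariski closure of the image of this map. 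Comparing with the previous paragraph: if all $s_{i}=0$ — equivalently all $\overline{B_{i}M}\neq0$ — this is literally the rational map computing $\mathcal{MP}(\overline{\Kcal})$, whence $\mathcal{MP}(\overline{\Kcal})=\underline{g}_{\Lambda^{*}}(X_{[\Lambda^{*}],\Gamma})$; and if some $s_{i}>0$, then $\mathcal{MP}(\overline{\Kcal})=\emptyset$. In all cases $\mathcal{MP}(\overline{\Kcal})$ is either empty or equal to $\underline{g}_{\Lambda^{*}}(X_{[\Lambda^{*}],\Gamma})$, with $[\Lambda^{*}]\in\mathrm{conv}(\Gamma)$.

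The conclusion is then immediate. If $\mathcal{MP}(\overline{\Kcal})=\emptyset$, then $(1)$ is trivial, and both sides of the equivalence in $(2)$ are false: the empty scheme is not an irreducible component of the non-empty variety $\mathcal{M}(\Gamma)_{\overline{\K}}$, and its dimension is not $d-1$ (being negative, while $d-1\geq0$). If instead $\mathcal{MP}(\overline{\Kcal})=\underline{g}_{\Lambda^{*}}(X_{[\Lambda^{*}],\Gamma})$, then $(1)$ is \cref{thm-hlmust}(1); and since $\underline{g}_{\Lambda^{*}}$ is an isomorphism we have $\dim\mathcal{MP}(\overline{\Kcal})=\dim X_{[\Lambda^{*}],\Gamma}$, so \cref{thm-hlmust}(2) says exactly that $\mathcal{MP}(\overline{\Kcal})$ is an irreducible component of $\mathcal{M}(\Gamma)_{\overline{\K}}$ if and only if $\dim\mathcal{MP}(\overline{\Kcal})=d-1$, which is $(2)$.

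The main obstacle is the middle paragraph: one must reconcile the embedding $\mathbb{P}(V)\xrightarrow{(B_{1},\dots,B_{e})}\mathbb{P}(\Ocal_{K}^{d})^{e}$ fixed in the statement with the reference-lattice construction of \cref{subsec:class}, and verify that in the basis $v_{1},\dots,v_{n}$ of $\Lambda^{*}$ the matrices cutting out $\underline{g}_{\overline{\K}}$ reduce exactly, up to the saturation scalars $\pi^{-s_{i}}$, to those of the rational map computing $\mathcal{MP}(\overline{\Kcal})$. The bookkeeping with dual bases, the projective spaces of lattices, and saturations is where essentially all the work lies; granting it — together with the emptiness dichotomy, which must be isolated ($\overline{B_{i}M}=0$ for some $i$, i.e.\ $B_{i}\Lambda^{*}\subseteq\pi\Ocal_{K}^{d}$, is precisely the obstruction to $\mathcal{MP}(\overline{\Kcal})$ being non-empty) — \cref{thm-hlmust} delivers both claims.
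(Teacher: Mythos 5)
Your proof is correct, but it takes a genuinely different route from the paper's. The paper disposes of this proposition in one line, by asserting that the argument of \cite[Lem.~3.7]{hahn2020mustafin} can be repeated verbatim with $\mathcal{MP}(\overline{\Kcal})$ in place of the varieties $X_{[\Lambda],\Gamma}$, i.e.\ it re-runs (by reference) the containment-in-the-special-fiber argument together with the dimension criterion for irreducible components. You instead reduce everything to \cref{thm-hlmust} as already quoted in the paper, by recognizing $\mathcal{MP}(\overline{\Kcal})$ as the classified piece attached to the specific vertex $[\Lambda^*]$ with $\Lambda^*=\bigcap_{i}\Lambda_i$: the isomorphism $a\mapsto(B_1a\mid\dots\mid B_ea)$ carries $\Lambda^*$ onto $\Kcal$, an $\Ocal_K$-basis $M=(v_1\mid\dots\mid v_d)$ of $\Lambda^*$ yields the matrices $B_iM\in\Mat_d(\Ocal_K)$ describing $\overline{\Kcal}$, and with the choices $g_i=B_i^{-1}M^{-1}$ and $g_{\Lambda^*}=M$ these are exactly the matrices arising in \cref{subsec:class} at $[\Lambda^*]$ (the matrices there represent $g_i^{-1}$, as the paper's own worked example shows, and the composite generic embedding is then $(B_1,\dots,B_e)$, matching the statement); so the bookkeeping you defer in your middle paragraph does check out. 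Your dichotomy is also sound: $\mathcal{MP}(\overline{\Kcal})=\emptyset$ exactly when some $B_i\Lambda^*\subseteq\pi\Ocal_K^d$ (some saturation exponent $s_i>0$), in which case both sides of $(2)$ fail trivially, while in the saturated case $\mathcal{MP}(\overline{\Kcal})=\bigl(\underline{g}_{\Lambda^*}\bigr)_{\overline{\K}}\bigl(X_{[\Lambda^*],\Gamma}\bigr)$ and \cref{thm-hlmust}(1)--(2) give $(1)$ and $(2)$ directly, the hypothesis that $\overline{K}$ is infinite being what lets you pass between the closure of the set of $\overline{\K}$-points of $\overline{\Kcal}$ and the closure of the image of the rational map. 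What your route buys, beyond avoiding a rerun of the Hahn--Li argument, is the extra information that the distinguished lattice class realizing $\mathcal{MP}(\overline{\Kcal})$ is $[\bigcap_i\Lambda_i]$; this sharpens the discussion following the proposition and is consistent with \cref{prop-must}, where the hypothesis $\Kcal=\langle A_1,\dots,A_d\rangle_{\Ocal_K}$ forces $\Lambda^*=\Ocal_K^d$. The paper's route, by contrast, requires none of this coordinate and saturation bookkeeping, but only because it leaves the actual argument to the cited lemma.
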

	 
	 \begin{proof}
	 	The proof is analogous to the proof of \cite[lemma 3.7]{hahn2020mustafin}.
	 \end{proof}
	 
	 \noindent
	 By Theorem \ref{thm-hlmust}, there exists a unique lattice class $[\Lambda]\in\mathrm{conv}(\Gamma)$, such that $ \mathcal{MP}(\overline{\Kcal})=(B_1,\dots,B_e)\left( X_{[\Lambda],\Gamma}\right)$. It is an interesting question to identify this homothety class. The following result gives a necessary criterion for the case when $[\Lambda]=[\mathcal{O}_K^d]$.

	 \begin{theorem}
	 	\label{prop-must}
	 	We assume that $\overline{K}$ is infinite. Let $\Gamma=\{[\Lambda_1],\dots,[\Lambda_e]\}\subseteq\mathfrak{B}_d^0$ be the finite set of homothety classes induced by $B_1,\dots,B_e$ as above. If $\mathcal{K}=\langle A_1,\dots,A_d\rangle_{\mathcal{O}_K}$ and $\mathrm{dim}(\mathcal{MP}(\overline{\Kcal}))=d-1$, then we have $[\mathcal{O}_K^d]\in\mathrm{conv}(\Gamma)$.
	 \end{theorem}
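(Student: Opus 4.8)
The plan is to pin down, among the finitely many lattice classes in $\mathrm{conv}(\Gamma)$, the one that \cref{thm-hlmust} attaches to $\mathcal{MP}(\overline{\Kcal})$, and to show it is the standard class $[\mathcal{O}_K^d]$. Since the embedding isomorphism $\underline{g}_{\mathcal{O}_K^d}$ associated to the reference lattice $\mathcal{O}_K^d$ is the identity, the (potential) component of $\mathcal{M}(\Gamma)_{\overline{K}}$ belonging to $[\mathcal{O}_K^d]$ is exactly the variety $X_{[\mathcal{O}_K^d],\Gamma}$ of \cref{subsec:class}. I would therefore prove the identity $X_{[\mathcal{O}_K^d],\Gamma}=\mathcal{MP}(\overline{\Kcal})$; granting this and the hypothesis $\dim(\mathcal{MP}(\overline{\Kcal}))=d-1$, part $(3)$ of \cref{thm-hlmust} gives $[\mathcal{O}_K^d]\in\mathrm{conv}(\Gamma)$ at once.

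To establish this identity I would compute both sides as closures of images of rational maps. On one side, the hypothesis $\mathcal{K}=\langle A_1,\dots,A_d\rangle_{\mathcal{O}_K}$ forces each $A_j$, hence each $B_i$, to have entries in $\mathcal{O}_K$, and $\mathcal{K}=\{(B_1a\mid\dots\mid B_ea):a\in\mathcal{O}_K^d\}$; reducing modulo $\pi$ gives $\overline{\Kcal}=\{(\overline{B_1}\bar a\mid\dots\mid\overline{B_e}\bar a):\bar a\in\overline{K}^d\}$ with $\overline{B_i}=B_i\bmod\pi$, so that $\mathcal{MP}(\overline{\Kcal})$ is the Zariski closure of the image of the rational map $\mathbb{P}_{\overline{K}}^{d-1}\to(\mathbb{P}_{\overline{K}}^{d-1})^e$ sending $\bar a$ to $(\overline{B_1}\bar a,\dots,\overline{B_e}\bar a)$. (This is the one point where the basis hypothesis is genuinely used: in general one would first have to re-saturate the $B_i$.) Moreover, since $\dim(\mathcal{MP}(\overline{\Kcal}))=d-1\ge 0$, this closure is non-empty, which forces $\overline{B_i}\neq 0$ for every $i$ — otherwise every matrix of $\overline{\Kcal}$ would have a vanishing $i$-th column and $\mathcal{MP}(\overline{\Kcal})$ would be empty — i.e.\ each $B_i$ has an entry of valuation $0$. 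On the other side, for the reference lattice $\Lambda=\mathcal{O}_K^d$ and the choice $g_i=B_i^{-1}$ (legitimate because $\Lambda_i=B_i^{-1}\mathcal{O}_K^d$), the map $\underline{g}$ of \cref{subsec:class} is $x\mapsto(g_1^{-1}x,\dots,g_e^{-1}x)=(B_1x,\dots,B_ex)$, that is, the very map inducing the embedding $\mathcal{M}(\Gamma)\subseteq\mathbb{P}(\mathcal{O}_K^d)^e$ fixed in \cref{prop-metricmust}; since its coordinate matrices $B_i$ already have an entry of valuation $0$, their $\pi$-saturations $\tilde{B}_i$ coincide with the $B_i$ themselves, so the induced map on special fibers is $\underline{g}_{\overline{K}}\colon\bar a\mapsto(\overline{B_1}\bar a,\dots,\overline{B_e}\bar a)$ and $X_{[\mathcal{O}_K^d],\Gamma}=\overline{\mathrm{Im}(\underline{g}_{\overline{K}})}$. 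Comparing with the previous computation yields $X_{[\mathcal{O}_K^d],\Gamma}=\mathcal{MP}(\overline{\Kcal})$, hence $\dim(X_{[\mathcal{O}_K^d],\Gamma})=d-1$ and, by part $(3)$ of \cref{thm-hlmust}, $[\mathcal{O}_K^d]\in\mathrm{conv}(\Gamma)$.

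I expect the only real obstacle to lie in the bookkeeping with the conventions buried in \cref{subsec:class} and \cref{prop-metricmust}: one has to check with care that, in the embedding fixed in \cref{prop-metricmust}, the reference-lattice map $\underline{g}$ is genuinely $x\mapsto(B_1x,\dots,B_ex)$ and not its inverse or some twist, and that the $\pi$-saturation occurring in the definition of $X_{[\mathcal{O}_K^d],\Gamma}$ is trivial once one knows that each $B_i$ has a unit entry. Beyond these identifications the argument is a short dimension count that reduces entirely to \cref{prop-metricmust} and part $(3)$ of \cref{thm-hlmust}, with no further geometric input required.
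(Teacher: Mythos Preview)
Your proposal is correct and follows essentially the same route as the paper's own proof: both arguments observe that the hypothesis on $\mathcal{K}$ forces the $B_i$ to be $\pi$-saturated (you argue this via non-emptiness of $\mathcal{MP}(\overline{\Kcal})$, exactly as the paper does), identify $\mathcal{MP}(\overline{\Kcal})$ with the image closure of the rational map $\bar a\mapsto(\overline{B_1}\bar a,\dots,\overline{B_e}\bar a)$, recognise this as $X_{[\mathcal{O}_K^d],\Gamma}$, and then invoke part~$(3)$ of \cref{thm-hlmust}. Your write-up is in fact more explicit than the paper's about the convention-checking (that $g_i=B_i^{-1}$ so $g_i^{-1}=B_i$, and that $\underline{g}_{\mathcal{O}_K^d}$ is the identity), which is precisely the bookkeeping you flag as the only delicate point.
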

	 
	 \begin{proof}
	 	We first observe that if $\mathcal{K}=\langle A_1,\dots,A_d\rangle_{\mathcal{O}_K}$ and $\mathrm{dim}(\mathcal{MP}(\overline{\Kcal}))=d-1$, the $B_i$ are saturated as well, since otherwise $\mathcal{MP}(\overline{\Kcal})=\emptyset$. We then deduce that
	 	\begin{equation}
	 		\overline{\mathcal{K}}={\overline{\K}} \left(A_1\,\mathrm{mod}\,\pi\right)\oplus\dots\oplus {\overline{\K}} \left(A_d\,\mathrm{mod}\,\pi\right)
	 	\end{equation}
	 	and therefore
	 	\begin{equation}
	 		\mathcal{MP}(\overline{\Kcal})=\overline{\mathrm{Im}\left(
	 			\mathbb{P}_{\overline{\K}}^d \xdashrightarrow{(B_1\,\mathrm{mod}\,\pi,\dots,B_e\,\mathrm{mod}\,\pi)} \left(\mathbb{P}_{\overline{\K}}^d\right)^e\right)}.
	 	\end{equation}
	 	Thus, from the definition of $\mathcal{MP}(\overline{\mathcal{K}})$ and  Theorem \ref{thm-hlmust}, we obtain 
	 	\[
	 	    \mathcal{MP}(\overline{\Kcal}) =  (B_1, \ldots , B_e)_{\overline{K}} \ ( X_{[\Lambda],\Gamma} )
	 	\] 
	 	and, again from Theorem \ref{thm-hlmust}, we obtain $[\mathcal{O}_K^d]\in\mathrm{conv}(\Gamma)$.
	 \end{proof}
	 
    \begin{example}
    We note that in the setting of \cref{prop-must}, the condition $[\mathcal{O}_K^d]\in\mathrm{conv}(\Gamma)$ is only necessary but not sufficient condition. Indeed, we may consider $\mathcal{L}=\langle A_1,A_2\rangle$, where
    \begin{equation}
        A_1=\begin{pmatrix}
            1 & \pi\\
            0 & 0
        \end{pmatrix}\quad\textrm{and}\quad A_2=\begin{pmatrix}
           0 & 0\\
           \pi & 1
        \end{pmatrix}.
    \end{equation}
    We see that \begin{equation}
        \mathcal{\overline{K}}= \left\langle\begin{pmatrix}
            1 & 0\\
            0 & 0
        \end{pmatrix},\begin{pmatrix}
            0 & 0\\
            0 & 1
        \end{pmatrix} \right \rangle
    \end{equation}
    and therefore $\mathrm{dim}(\overline{\Kcal})=2$. A similar argument as in \cref{ex:projcl}(2) shows that $\mathcal{MP}(\overline{\Kcal})$ is simply a point and therefore $\dim(\mathcal{MP}(\overline{\Kcal})) = 0$. Thus, we have $\dim(\mathcal{MP}(\overline{\Kcal}) )\neq d-1=1$.\\
    We now show that for $\Gamma$ as in \cref{prop-must}, we have $[\mathcal{O}_K^2]\in{\rm conv}(\Gamma)$. From the matrices $A_1,A_2$, we obtain
    \begin{equation}
        B_1=\begin{pmatrix}
            1 & 0\\
            0 & \pi
        \end{pmatrix}\quad\textrm{and}\quad B_2=\begin{pmatrix}
            \pi & 0\\
            0 & 1
        \end{pmatrix}.
    \end{equation}
    With $(e_1,e_2)$ denoting the standard basis of $\mathcal{O}_K^2$, we thus obtain $\Gamma=\{[L_1],[L_2]\}$ as the corresponding set of homothety classes, where $L_1=\mathcal{O}_K e_1+\pi\mathcal{O}_Ke_2$ and $L_2=\pi\mathcal{O}_Ke_1+\mathcal{O_K}e_2$. We see immediately that $L_1\cap L_2=\pi\mathcal{O}_K$ and so  $[\mathcal{O}_K^2]=[\pi\mathcal{O}_K^2]\in\mathrm{conv}(\Gamma)$ as desired.
    \end{example}
    
    \begin{example}
    In this example, we illustrate that $\mathcal{K}=\langle A_1,\dots,A_n\rangle_{\mathcal{O}_K}$ is in fact necessary to deduce $[\mathcal{O}_K^d]\in\mathrm{conv}(\Gamma)$. 
    We consider $\mathcal{L}=\langle A_1,A_2\rangle$ where
    \begin{equation}
        A_1=\begin{pmatrix}
            1 & 1\\
            0 & 0
        \end{pmatrix}\quad\textrm{and}\quad
        A_2=\begin{pmatrix}
            1 & 1\\
            \pi & \pi
        \end{pmatrix}.
    \end{equation}
    We see immediately that $A_1$ and $A_2$ do not form an $\mathcal{O}_K$-basis, since 
    \begin{equation}
        \begin{pmatrix}
            0 & 0\\
            1 & 1
        \end{pmatrix}=\frac{1}{\pi}\left(A_2-A_1\right)\in\mathcal{L}.
    \end{equation}
    We also see that $\overline{\mathcal{K}}=2$ and, since $\mathcal{MP(\overline{\mathcal{K}})}$ is the diagonal in $\mathbb{P}^1\times\mathbb{P}^1$, that $\mathrm{dim}(\mathcal{MP(\overline{\mathcal{K}})})=1$. However, we obtain from $A_1$ and $A_2$ that
    \begin{equation}
        B_1=B_2=\begin{pmatrix}
            1 & 1\\
            0 & \pi
        \end{pmatrix}
    \end{equation}
    and observe that the lattice class associated to $B_1=B_2$ is different from $[\mathcal{O}_K^2]$.

    \end{example}

	 \begin{remark} 
    Let $E$ be the maximal unramified field extension of $K$ and $\mathcal{L}$ be a linear space of matrices in $\mathrm{Mat}_{d\times e}(K)$. Then $E\otimes_K\mathcal{L}$ is a linear subspace of $\mathrm{Mat}_{d\times e}(E)$ and it is easily seen that the ranks of
    \begin{equation}
        \mathcal{K}\quad\textrm{and}\quad (E\otimes_K\mathcal{L})\cap\mathrm{Mat}_{d\times e}(\mathcal{O}_E)
    \end{equation}
    are equal. By observing that $(E\otimes_K\mathcal{L})\cap\mathrm{Mat}_{d\times e}(\Ocal_E)=\mathcal{O}_E\otimes_{\mathcal{O}_K}\mathcal{K}$, we see that the dimensions of
    \begin{equation}
        \overline{\mathcal{K}}\quad\textrm{and}\quad (E\otimes_K\mathcal{L})\cap\mathrm{Mat}_{d\times e}(\Ocal_E)\mod\pi\subset\mathrm{Mat}_{d\times e}(\overline{E})
    \end{equation}
    agree. Therefore, the question of whether $\mathrm{dim}(\overline{K})=d-1$ may be detected by passing to the extension $E$ whose residue field is the separable closure of $\overline{K}$.
\end{remark}
     \noindent
	 \cref{prop-must} leads us to believe that the convex hull $\mathrm{conv}(\Gamma)$ encodes the changes of bases necessary to get an $\Ocal_K$-basis of $\Kcal$.  This is not very surprising because one of the many interesting aspects of the building $\Bfrak_d$ is that it encodes the combinatorics of base change in a linear space.
	 
	 \begin{question1}
	 	In the setting of \cref{prop-must}, can a base change of $A_1,\dots,A_d$ that yields an $\mathcal{O}_K$-basis of $\Kcal$ be detected from the convex hull $\mathrm{conv}(\Gamma)$?
	 \end{question1}
	 
\bibliographystyle{acm}
\bibliography{references}

\end{document}